\DeclareMathAlphabet{\mathpzc}{OT1}{pzc}{m}{it}
\numberwithin{equation}{section}
\newtheorem{thm}[subsection]{Theorem}
\newtheorem*{cor*}{Corollary}
\newtheorem{lemma}[subsection]{Lemma}
\newtheorem*{lem}{Lemma}
\newtheorem{propos}[subsection]{Proposition}
\newtheorem*{thm*}{Theorem}
\newtheorem*{thma*}{Theorem A}
\newtheorem*{thmb*}{Theorem B}
\newtheorem*{thmc*}{Theorem C}
\newtheorem{defn}[subsection]{Definition}
\newtheorem{cor}{Corollary}[subsection]
\newtheorem{rem}{Remark}[subsection]
\newtheorem*{claim}{Claim}
\newcounter{consta}
\renewcommand{\theconsta}{{L_{\arabic{consta}}}}
\newcommand{\consta}{\refstepcounter{consta}\theconsta}
\newcounter{constb}
\newcounter{constc}
\newcounter{constD}
\def\bbz{\mathbb{Z}}
\def\bbq{\mathbb{Q}}
\def\bbr{\mathbb{R}}
\def\bbc{\mathbb{C}}
\def\bbh{\mathbb{H}}
\def\bbn{\mathbb{N}}
\def\Gbf{\mathbb{G}}
\def\R{\bbr}
\def\Gbf{\mathbf{G}}
\def\Hbf{\mathbf{H}}
\def\H{\Hbf}
\def\G{\Gbf}
\def\vol{{\rm{vol}}}
\def\SL{{\rm{SL}}}
\def\PSL{{\rm{PSL}}}
\def\GL{{\rm{GL}}}
\def\PGL{{\rm{PGL}}}
\def\SO{{\rm{SO}}}
\def\qpl{{\mathcal S}}
\def\places{{\Sigma}}
\def\vare{\varepsilon}
\def\zg0{Z_{G_\omega}(s)}
\def\zg{Z_G(s)}
\def\be{\begin{equation}}
\def\ee{\end{equation}}
\def\dist{{\rm dist}}
\def\gfield{\mathsf L}
\def\gfd{\gfield}
\def\lf{\mathfrak l}
\def\sH{{}^\sigma\H}
\def\sG{{}^\sigma\G}
\def\sbG{{}^\sigma\G}
\def\P{{\bf Q}}
\def\sP{\P_\sigma}
\def\ogs{{\Omega}_{\Gamma,\rho}}
\def\fd{\mathfrak F}
\def\Gm{{\mathsf m}}
\def\dhyp{{\rm d}_h}
\def\diff{\operatorname{d}}
\def\dt{{\rm d}_{\mathcal T}}
\def\tc{\mathfrak C}
\def\dtv{{\rm d}_{\mathcal T_v}}
\def\sfc{{\mathsf S}}
\begin{document}

\title[Arithmeticity of $3$-manifolds]{Arithmeticity of hyperbolic $3$-manifolds containing infinitely many totally geodesic surfaces}

\author{G.~Margulis}
\address{G.M.: Yale University, Mathematics Dept., PO Box 208283, New Haven, CT 06520, USA}
\email{gregorii.margulis@yale.edu}

\author{A.~Mohammadi}
\address{A.M.:  Department of Mathematics, The University of California, San Diego, CA 92093, USA}
\email{ammohammadi@ucsd.edu}
\thanks{A.M.~acknowledges support by the NSF}

\maketitle


\begin{abstract}
We prove that if a closed hyperbolic $3$-manifold $M$ contains infinitely many totally geodesic surfaces,
then $M$ is arithmetic.   
\end{abstract}

\section{Introduction}\label{sec:intro}
Let $\G$ be a connected semisimple $\bbr$-group so that $\G(\R)$ has no compact factors.  
An irreducible lattice $\Gamma_0$ in $\G(\R)$ is called {\em arithmetic} if there exists a connected, non-commutative, almost $\bbq$-simple, $\bbq$-group ${\bf F}$ and an $\bbr$-epimorphism $\varrho:{\bf F}\to\G$ such that the Lie group $(\ker\varrho)(\bbr)$ is compact 
and $\Gamma_0$ is commensurable with $\varrho({\bf F}(\bbz))$, see~\cite[Ch.~IX]{Margulis-Book}.  

\medskip

Margulis~\cite{Margulis-Arith-Inv} proved the following. 

\begin{thma*}[Arithmeticity]
Let $\G$ be a connected semisimple $\bbr$-group so that $\G(\R)$ has no compact factors. 
Let $\Gamma_0$ be an irreducible lattice in $\G(\R)$. 
Assume further that ${\rm rank}_{\bbr}\G\geq 2$.
Then $\Gamma_0$ is arithmetic. 
\end{thma*}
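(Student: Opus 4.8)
The plan is to deduce arithmeticity from a \emph{superrigidity theorem} for $\Gamma_0$, whose proof is the main obstacle. I would first establish: if $k$ is a local field of characteristic $0$, $\mathbf H$ is a connected adjoint $k$-simple $k$-group, and $\rho\colon\Gamma_0\to\mathbf H(k)$ is a homomorphism with Zariski-dense, unbounded image, then $\rho$ extends to a continuous homomorphism $\G(\bbr)\to\mathbf H(k)$; in particular, since a continuous homomorphism from $\G(\bbr)$ to a totally disconnected group has finite image, no such unbounded $\rho$ exists when $k$ is non-archimedean. To prove this I would use the measurable cocycle / boundary-map method: fixing a minimal parabolic $P<\G(\bbr)$, amenability of $P$ together with the $\Gamma_0$-action on $\mathbf H(k)/\mathbf Q(k)$ for a suitable parabolic $\mathbf Q<\mathbf H$ yields a measurable $\Gamma_0$-equivariant map $\G(\bbr)/P\to\mathbf H(k)/\mathbf Q(k)$, which one then shows to be essentially \emph{rational}. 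The rationality step is precisely where $\operatorname{rank}_\bbr\G\geq2$ is indispensable --- it rests on the rigidity of the parabolic and horospherical subgroup structure of higher-rank groups, on the Mautner phenomenon and ergodicity of the relevant subgroup actions on $\Gamma_0\backslash\G(\bbr)$, and on irreducibility of $\Gamma_0$; in rank one the statement is false. Rationality of the boundary map then upgrades $\rho$ to agree with a rational $\bbr$-morphism, which extends to $\G(\bbr)$. Throughout, the Borel density theorem is used so that $\Gamma_0$ is Zariski dense in $\G$, and the finiteness of $\Gamma_0^{\mathrm{ab}}$ is used to reduce general targets to adjoint semisimple ones.

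Granting superrigidity, I would construct the $\bbq$-structure as follows. Replacing $\G$ by its adjoint group and $\Gamma_0$ by its image is harmless, so assume $\G$ adjoint and embed $\Gamma_0\hookrightarrow\G(\bbr)\subset\GL_N(\bbr)$ via $\Ad$, where $N=\dim\G$. Lattices in semisimple groups are finitely generated, so the subfield $F\subset\bbr$ generated over $\bbq$ by the matrix entries of $\Ad(\gamma)$, $\gamma\in\Gamma_0$, is finitely generated over $\bbq$. I claim $F$ is a number field: otherwise some matrix entry $c$ is transcendental over $\bbq$, one may embed $F$ into a non-archimedean local field $L$ with $|c|_L>1$ so that $\Gamma_0$ is unbounded in $\GL_N(L)=\G(L)$, and projecting to a $k$-simple factor in which the image stays unbounded and Zariski dense then contradicts non-archimedean superrigidity. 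With $F$ a number field, non-archimedean superrigidity forces $\Gamma_0$ to be bounded in $\GL_N(F_v)$ at every finite place $v$; hence $\Gamma_0$ stabilizes a lattice in $F_v^{N}$ for each $v$ (the standard one for all but finitely many $v$), so it stabilizes a single finitely generated $\ocal_F$-lattice $\Lambda\subset F^{N}$, and identifying $\operatorname{Aut}_{\ocal_F}(\Lambda)$ with a subgroup of $\GL_N(\ocal_F)$ exhibits an $F$-form $\mathbf G'$ of $\G$ with $\G=\mathbf G'\times_F\bbr$ (via the identity embedding $F\hookrightarrow\bbr$) and $\Gamma_0\subset\mathbf G'(\ocal_F)$.

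Finally, I would take $\mathbf F$ to be the $\bbq$-Zariski closure of $\Gamma_0$ inside $\operatorname{Res}_{F/\bbq}\mathbf G'$; it is a connected $\bbq$-subgroup, and almost $\bbq$-simple because irreducibility of $\Gamma_0$ prevents its Zariski closure from splitting as a direct product. Projection to the factor corresponding to the identity embedding of $F$ gives an $\bbr$-epimorphism $\varrho\colon\mathbf F\to\G$. For each nonidentity embedding $\sigma\colon F\hookrightarrow\bar\bbq$, the representation $\sigma\circ\Ad$ of $\Gamma_0$ has either bounded image --- contributing a compact factor to $\mathbf F(\bbr)$ --- or unbounded (hence Zariski-dense) image, in which case archimedean superrigidity extends it to a fixed $\bbr$-morphism; consequently $\mathbf F(\bbr)$, modulo a compact normal subgroup, is the graph of a morphism out of $\G(\bbr)$, so $(\ker\varrho)(\bbr)$ is compact. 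Since $\Gamma_0$ has bounded denominators, for a suitable $\bbz$-structure one has $\Gamma_0\subset\mathbf F(\bbz)$ --- the integrality again coming from non-archimedean superrigidity --- and $\varrho(\mathbf F(\bbz))$ is a lattice in $\G(\bbr)$ that contains $\Gamma_0$ with finite index; hence $\Gamma_0$ is commensurable with $\varrho(\mathbf F(\bbz))$, which is exactly the asserted arithmeticity. The real weight of the argument lies in the superrigidity theorem; the passage from it to arithmeticity is the by-now standard construction above.
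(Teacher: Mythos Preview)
Your proposal is correct and follows essentially the same route the paper indicates: Theorem~A is quoted as a known result of Margulis, and the paper only sketches that one reduces to $\G$ adjoint with $\Gamma_0\subset\G(\gfield)$ for a finitely generated field $\gfield$, upgrades $\gfield$ to a number field, and then applies the superrigidity Theorem~B to the representations coming from the various embeddings of $\gfield$ into local fields; the passage from superrigidity to arithmeticity via the Zariski closure of $\Gamma_0$ inside ${\rm R}_{\gfield/\bbq}(\G)$ is exactly what the paper recalls in \S\ref{sec:proofs}. The one minor divergence is in how the number field is obtained: the paper invokes local rigidity of $\Gamma_0$, whereas you argue directly from non-archimedean superrigidity that no matrix entry can be transcendental --- both are standard and lead to the same conclusion.
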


Let $\Gamma_0$ and $\G(\bbr)$ be as in Theorem~A. 
One may reduce the proof of Theorem~A to the case where $\G$ is a group of adjoint type
defined over a finitely generated field $\gfield$ and $\Gamma_0\subset\G(\gfield)$ --- indeed using {\em local rigidity}, 
one may further assume that $\gfield$ is a number field.
The proof of Theorem~A is based on applying the following supperrigidity theorem, which was also proved in~\cite{Margulis-Arith-Inv}, to representations obtained from different embeddings of $\gfield$ into local fields. 

\begin{thmb*}[Superrigidity]
Let $\G$ be a connected semisimple $\bbr$-group. 
Let $\Gamma_0$ be an irreducible lattice in $\G(\bbr)$.
Assume further that ${\rm rank}_{\bbr}\G\geq 2$.
Let $\lf$ be a local field and let $\H$ be a connected, adjoint, absolutely simple $\lf$-group. 
Let $\rho:\Gamma_0\to\H(\lf)$ be a homomorphism so that
\begin{quote}
$\rho(\Gamma_0)$ is Zariski dense and is not bounded in $\H(\lf)$.
\end{quote}
Then $\rho$ extends uniquely to a continuous homomorphism $\tilde\rho:\G(\bbr)\to\H(\lf)$.
\end{thmb*}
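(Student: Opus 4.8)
\emph{Proof sketch.} The plan is to run the classical boundary--map argument of Furstenberg--Margulis. After the routine reductions we may assume $\Gamma_0\subset\G(\bbr)$, and then $\Gamma_0$ is Zariski dense in $\G$ by the Borel density theorem. Fix a minimal parabolic $\bbr$-subgroup $\Pbf\subset\G$ and let $B=\G(\bbr)/\Pbf(\bbr)$ be the associated flag manifold; it is a compact $\G(\bbr)$-space with a canonical invariant measure class, the $\Gamma_0$-action on it is amenable because $\Pbf(\bbr)$ is amenable, and it is ergodic by Moore's ergodicity theorem (this is where irreducibility of $\Gamma_0$ enters, $\Pbf(\bbr)$ being noncompact). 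The argument then has three steps: (i) construct a measurable $\Gamma_0$-equivariant map $\phi\colon B\to\H(\lf)/\Qbf(\lf)$ for a suitable parabolic $\lf$-subgroup $\Qbf\subsetneq\H$; (ii) show that $\phi$ coincides almost everywhere with an $\lf$-rational map; (iii) combine equivariance with Zariski density of $\Gamma_0$ in $\G$ to extend $\rho$.

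For step (i), choose a minimal parabolic $\lf$-subgroup $\Qbf_0\subset\H$, so that $\H(\lf)/\Qbf_0(\lf)$ is compact. Amenability of the $\Gamma_0$-action on $B$, applied to the bundle over $B$ with fibre $\operatorname{Prob}\bigl(\H(\lf)/\Qbf_0(\lf)\bigr)$ and the affine $\H(\lf)$-action, yields a measurable $\Gamma_0$-equivariant map $\Phi\colon B\to\operatorname{Prob}\bigl(\H(\lf)/\Qbf_0(\lf)\bigr)$. One then invokes Furstenberg's lemma on stabilizers of probability measures on flag varieties over a local field: the $\H(\lf)$-stabilizer of a probability measure not concentrated on a proper Schubert-type subvariety is contained in a proper $\lf$-algebraic subgroup of $\H$. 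Since $\rho(\Gamma_0)$ is Zariski dense and not bounded, and the $\Gamma_0$-action on $B$ is ergodic, this forces the measures $\Phi(b)$ --- after projecting to a partial flag variety $\H(\lf)/\Qbf(\lf)$, with $\Qbf\supseteq\Qbf_0$ minimal for the purpose --- to be Dirac masses for almost every $b$; the resulting map $\phi\colon B\to\H(\lf)/\Qbf(\lf)$ is ``Zariski dense'', i.e.\ its essential image lies in no proper $\lf$-subvariety, again because $\rho(\Gamma_0)$ is Zariski dense (the essential image is $\rho(\Gamma_0)$-invariant, so its Zariski closure is $\H$-invariant).

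Step (ii) is the heart of the matter and the step I expect to be the main obstacle; it is the only place where the hypothesis ${\rm rank}_\bbr\G\geq 2$ is genuinely used. Write $\Pbf=\bigcap_i\Pbf_i$ as the intersection of the maximal parabolic $\bbr$-subgroups attached to the simple $\bbr$-roots --- of which there are at least two. Each projection $B=\G/\Pbf\to\G/\Pbf_i$ has fibres that are flag manifolds of rank-one type subgroups, and the associated horospherical unipotent subgroups satisfy commutation relations which, precisely because at least two simple roots are present, allow one to show that $\phi$ is essentially invariant, in the appropriate conditional sense along these fibrations, under a nontrivial unipotent subgroup $\Ubf\subset\G$. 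A measurable map into an $\lf$-variety that, on a Zariski-dense open subset of a unipotent group, is invariant under a full subgroup and depends algebraically on the transverse directions must coincide almost everywhere with a rational map; running this over all the simple roots (this is where having at least two of them is indispensable) shows that $\phi$ coincides a.e.\ with an $\lf$-rational, $\Gamma_0$-equivariant map $\psi\colon B\to\H/\Qbf$ defined on a Zariski-dense open set.

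For step (iii), the identity $\psi(\gamma x)=\rho(\gamma)\psi(x)$, valid for $\gamma\in\Gamma_0$ and $x$ generic, is a Zariski-closed condition on $\gamma$; since $\Gamma_0$ is Zariski dense in $\G$ it propagates to all of $\G$, upgrading $\psi$ to a $\G$-equivariant morphism for a homomorphism of algebraic groups which, passed to the relevant real or $\lf$-points, restricts to $\rho$ on $\Gamma_0$ and provides the continuous homomorphism $\tilde\rho\colon\G(\bbr)\to\H(\lf)$. Uniqueness of $\tilde\rho$ follows because a continuous homomorphism out of $\G(\bbr)$ is determined by its values on the Zariski-dense subgroup $\Gamma_0$. (When $\lf$ is non-Archimedean this would force $\tilde\rho$ to be trivial on the identity component of $\G(\bbr)$, hence $\rho(\Gamma_0)$ finite and not Zariski dense --- consistent with the fact that for non-Archimedean $\lf$ the hypotheses of the theorem cannot be met.) In summary, the measure-theoretic input of step (i) and the propagation of equivariance in step (iii) are comparatively formal; the genuine difficulty, and the decisive use of higher rank, is concentrated in the rationality statement of step (ii).
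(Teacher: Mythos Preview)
The paper does not contain a proof of Theorem~B. It is quoted in the introduction as a classical result of Margulis, cited from~\cite{Margulis-Arith-Inv} and~\cite{Margulis-Book}, purely to motivate and contextualize the paper's own rank-one rigidity result (Theorem~\ref{thm:sup-rigid}). There is thus no argument in the paper against which to compare your sketch.

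For what it is worth, your outline is recognizably the standard Margulis proof: construct a boundary map via amenability of the minimal parabolic, establish its rationality using the higher-rank hypothesis, and propagate equivariance by Zariski density. As a sketch this is accurate, though your step~(ii) --- where the genuine content lies --- is too impressionistic to stand as a proof: the phrase ``invariance under a nontrivial unipotent subgroup leading to rationality'' elides the substantial combinatorics of commuting horospherical subgroups and the passage from rationality in each variable separately to global rationality. Your closing remark on the non-Archimedean case is correct in spirit but slightly circular as phrased: that the hypotheses cannot be met when $\lf$ is non-Archimedean is the \emph{content} of the theorem in that case, established by the same boundary-map argument and a contradiction, not an a~priori observation one can invoke while proving it.
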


It follows from the {\em weak approximation theorem} that if $\Gamma_0$ is an arithmetic group, 
the index of $\Gamma_0$ in ${\rm Comm}_{\G(\bbr)}(\Gamma_0)$ 
is infinite. Margulis proved the converse also holds, see~\cite[Ch.~IX]{Margulis-Book}.

\begin{thmc*}
Let $\G$ be a connected semisimple $\bbr$-group so that $\G(\R)$ has no compact factors. 
Let $\Gamma_0$ be an irreducible lattice in $\G(\R)$. 
Then $\Gamma_0$ is arithmetic if and only if the index of $\Gamma_0$ in ${\rm Comm}_{\G(\bbr)}(\Gamma_0)$ is infinite.  
\end{thmc*}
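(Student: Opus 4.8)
The plan is to prove both directions. The forward direction (arithmetic $\Rightarrow$ infinite index in the commensurator) is the easier one: if $\Gamma_0$ is arithmetic, realized as commensurable with $\varrho(\mathbf{F}(\bbz))$ for an $\bbr$-epimorphism $\varrho\colon\mathbf{F}\to\G$ with compact kernel over $\bbr$, then the commensurator of $\varrho(\mathbf{F}(\bbz))$ contains (a commensurated copy of) $\varrho(\mathbf{F}(\bbq))$. Since $\mathbf{F}(\bbq)$ is dense in $\mathbf{F}(\bbq_p)$ for every $p$ by the weak approximation theorem — more precisely since $\mathbf{F}(\bbz[1/p])$ is already much bigger than $\mathbf{F}(\bbz)$ — the subgroup $\varrho(\mathbf{F}(\bbq))$ is not contained in any finite extension of $\varrho(\mathbf{F}(\bbz))$, so $[\,\mathrm{Comm}_{\G(\bbr)}(\Gamma_0):\Gamma_0\,]=\infty$. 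I would spell this out by exhibiting, for each prime $p$, an element of $\mathbf{F}(\bbz[1/p])$ not commensurable into $\mathbf{F}(\bbz)$ and noting that $\varrho$ carries it into the commensurator.

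The substance is the converse. Here I would use the standard structure of the commensurator. Set $\Lambda=\mathrm{Comm}_{\G(\bbr)}(\Gamma_0)$ and assume $[\Lambda:\Gamma_0]=\infty$. After the reductions recalled in the text — passing to the adjoint form, so $\G$ is adjoint, and using local rigidity to arrange that $\Gamma_0\subset\G(\gfield)$ for a number field $\gfield$ with $\G$ defined over $\gfield$ — one knows that $\Lambda\subset\G(\gfield)$ as well, because elements commensurating an arithmetically defined Zariski-dense group must preserve the $\gfield$-structure up to conjugacy; more precisely $\Lambda$ normalizes the ring generated by $\Gamma_0$ up to the commensurability equivalence, and one extracts a number field over which all of $\Lambda$ is defined. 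The key dichotomy is then: either $\mathrm{rank}_{\bbr}\G\ge 2$, in which case Theorem A applies directly and $\Gamma_0$ is arithmetic and we are done; or $\mathrm{rank}_{\bbr}\G=1$ (or $\G(\bbr)$ is, up to isogeny, a product with a rank-one factor on which $\Gamma_0$ projects irreducibly), and here Theorem A is unavailable, so one must run the superrigidity-style argument by hand using the infinitude of the commensurator as the replacement for higher rank.

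In that rank-one situation the mechanism is: for each place $v$ of $\gfield$, consider the map $\rho_v\colon\Gamma_0\hookrightarrow\G(\gfield)\hookrightarrow\G(\gfield_v)$. Because $[\Lambda:\Gamma_0]=\infty$ and $\Lambda\subset\G(\gfield)$, the group $\Gamma_0$ cannot have bounded image in $\G(\gfield_v)$ for all but finitely many $v$ — an infinite commensurator forces the projection to the adele points (mod the open compact part) to be dense onto a nontrivial factor, so there is a nonarchimedean place $v$ where $\rho_v(\Gamma_0)$ is unbounded; it is automatically Zariski dense since $\Gamma_0$ is. One then wants to invoke superrigidity to extend $\rho_v$ to $\G(\bbr)\to\G(\gfield_v)$; but Theorem B as stated requires $\mathrm{rank}_{\bbr}\G\ge 2$, so in the genuine rank-one case one instead uses the commensurator directly: the pair $(\Gamma_0\subset\Lambda)$ with $\Lambda$ dense satisfies the hypotheses of commensurator-superrigidity, which yields that each such $\rho_v$ extends continuously. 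Running this over all completions of $\gfield$ and assembling the local data via the restriction-of-scalars group $\mathbf{F}=\mathcal R_{\gfield/\bbq}\G$, one gets that $\Gamma_0$ is commensurable with the integer points of a $\bbq$-group with compact kernel over $\bbr$, i.e.\ arithmetic.

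The main obstacle, and the step I would be most careful about, is precisely the passage that replaces higher rank: establishing the commensurator version of superrigidity (extension of $\rho_v$ when only $\mathrm{rank}_{\bbr}\G=1$ but $[\Lambda:\Gamma_0]=\infty$), which requires an independent argument — typically via ergodicity of the $\Lambda$-action on a boundary and a measurable-then-rational cocycle rigidity, rather than a citation of Theorem B. Everything else (the forward direction, the reduction to a number field, Zariski density of $\rho_v(\Gamma_0)$, and the final descent-of-the-field bookkeeping to produce $\mathbf{F}$ and its $\bbz$-points) is comparatively routine once that extension is in hand.
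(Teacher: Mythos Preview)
The paper does not prove Theorem~C; it is stated in the introduction as a known result of Margulis, with a citation to \cite[Ch.~IX]{Margulis-Book}, and is used only to motivate the main theorem. So there is no proof in the paper to compare your proposal against.

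That said, your overall architecture --- weak approximation for the forward direction, and for the converse a superrigidity theorem applied to the embeddings $\Gamma_0\hookrightarrow {}^\sigma\G(\gfield_v)$, with commensurator superrigidity replacing the higher-rank hypothesis --- is the correct one and matches the argument in Margulis's book. You are also right to flag commensurator superrigidity as the substantive step that cannot be obtained by citing Theorem~B.

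However, one passage in your converse argument is logically inverted. You write that the infinitude of $[\Lambda:\Gamma_0]$ forces the existence of a non-archimedean place $v$ at which $\rho_v(\Gamma_0)$ is \emph{unbounded}. This is not the role of the hypothesis, and indeed the desired conclusion is precisely the opposite: to prove arithmeticity one must show that $\sigma(\Gamma_0)$ is \emph{bounded} at every non-archimedean completion (and at every archimedean completion other than the original one). The way superrigidity enters is as a contradiction machine: \emph{if} some $\sigma(\Gamma_0)\subset{}^\sigma\G(\gfield_v)$ were unbounded, commensurator superrigidity would produce a continuous extension $\G(\bbr)\to{}^\sigma\G(\gfield_v)$, which is impossible for non-archimedean $\gfield_v$ since a connected real Lie group has no nontrivial continuous homomorphism to a totally disconnected group. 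Hence all non-archimedean images are bounded, and the restriction-of-scalars bookkeeping finishes the proof (exactly as in the paper's proof of Theorem~\ref{thm:main-Mnfld} from Theorem~\ref{thm:sup-rigid}). Your final sentence gets this assembly right, but the middle paragraph should be rewritten so that the infinite commensurator is the \emph{input} to the superrigidity step, not a source of unbounded projections.
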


Supperrigidity and arithmeticity theorems continue to hold for certain rank one Lie groups, namely ${\rm Sp}(n,1)$ and $F_4^{-20}$,~\cite{Grom-Sch, Corl}. However, there are examples of non-arithmetic lattices in $\SO(n,1)$ for all $n$ and 
also in ${\rm SU}(n,1)$ for $n=1,2,3$.

\subsection*{Totally geodesic surfaces and arithmeticity}\label{sec:statement}
The connected component of the identity in the Lie group 
$\SO(3,1)$ is isomorphic to  
\[
{\rm Isom}^+(\bbh^3)\simeq\PGL_2(\bbc).
\]

Let $M=\bbh^3/\Gamma$ be a closed oriented hyperbolic $3$-manifold   
presented as a quotient of the hyperbolic space by the action of a lattice
\[
\Gamma\subset \PGL_2(\bbc).
\]

A {\em totally geodesic surface} in $M$ is a proper geodesic immersion of a closed hyperbolic surface into $M$.
It is well-known and easy to see that there can be at most countably many totally geodesic surfaces in $M$. 

Reid~\cite{Reid-Commens} showed that if $\Gamma$ is an arithmetic group, then either $M$ contains no totally geodesic surfaces or it contains infinitely many such surfaces --- see Theorem~C above. 
There are also known examples for both of these possibilities,~\cite{MaclachlanReid}. 
More recently, it was shown in~\cite{FLMS} that a large class of non-arithmetic manifolds contain only finitely many totally geodesic surfaces.

The following theorem is the main result of this paper. 

\begin{thm}\label{thm:main-Mnfld}
Let $M=\bbh^3/\Gamma$ be a closed hyperbolic $3$-manifold. 
If $M$ contains infinitely many totally geodesic surfaces,
then $M$ is arithmetic. That is: $\Gamma$ is an arithmetic lattice.  
\end{thm}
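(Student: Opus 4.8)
The plan is to translate the hypothesis into a statement about closed orbits of $H:=\PGL_2(\bbr)$ on the homogeneous space $X:=\Gamma\backslash G$ with $G:=\PGL_2(\bbc)$, then to use measure rigidity together with the maximality of $H$ in $G$ to force those orbits to equidistribute, and finally to extract from an \emph{effective} form of that equidistribution an element of the commensurator of $\Gamma$ and invoke Theorem~C. \textbf{Step 1 (reformulation).} Identify $G=\PGL_2(\bbc)$ with ${\rm Isom}^+(\bbh^3)$ and let $H\simeq\PGL_2(\bbr)$ be the stabilizer in $G$ of a fixed totally geodesic plane $\bbh^2_0\subset\bbh^3$. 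Since $M$ is closed, $X$ is compact. A totally geodesic surface in $M$ is --- up to a finite ambiguity coming from orientations and finite covers --- the same datum as a closed $H$-orbit $\Gamma gH\subseteq X$ of finite $H$-invariant volume: the surface lifts to the plane $g\bbh^2_0$, and properness of the immersion is equivalent to $g^{-1}\Gamma g\cap H$ being a lattice in $H$. Distinct surfaces give distinct closed orbits, and any two distinct closed $H$-orbits are disjoint (if $\Gamma g_1h_1=\Gamma g_2h_2$ then $g_2\in\Gamma g_1H$, whence $\Gamma g_1H=\Gamma g_2H$). So the hypothesis produces an infinite family $\{Y_i=\Gamma g_iH\}_{i\ge 1}$ of pairwise disjoint closed $H$-orbits; let $\mu_i$ denote the $H$-invariant probability measure carried by $Y_i$.

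\textbf{Step 2 (equidistribution).} By compactness of $X$, a subsequence of $\{\mu_i\}$ converges weak-$*$ to a probability measure $\mu$. Since $H$ is generated by one-parameter unipotent subgroups, each $\mu_i$ is $H$-ergodic and homogeneous, so by the theorem of Mozes and Shah --- resting on Ratner's measure classification --- the limit $\mu$ is again homogeneous, the $L$-invariant measure on a closed orbit $Lx$ with some conjugate of $H$ contained in $L$. Now, viewing $\gfrak=\sl_2(\bbc)$ as a real Lie algebra, $\gfrak=\hfrak\oplus i\hfrak$, and $i\hfrak$ is \emph{irreducible} under the adjoint action of $\hfrak=\sl_2(\bbr)$; hence the only real Lie subalgebras of $\gfrak$ containing $\hfrak$ are $\hfrak$ and $\gfrak$ itself. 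Thus $H$ is a maximal connected subgroup of $G$, and the same irreducibility shows that $N_G(H)$ contains $H$ with finite index. If $L^\circ$ were a conjugate of $H$, then the structure part of the Mozes--Shah theorem would force $Y_i$, for all large $i$, to be a small $N_G(H)$-translate of one of the finitely many closed $H$-orbits comprising $\operatorname{supp}\mu$; since $N_G(H)/H$ is finite, such a small translate lies in $H$, so $Y_i=Y_j$ for all large $i,j$ --- contradicting that the $Y_i$ are infinitely many and distinct. Hence $L^\circ=G$, so $L=G$ (as $G$ is connected) and $\mu=m_X$, the $G$-invariant probability measure. As this holds along every subsequence, $\mu_i\to m_X$: the surfaces equidistribute in $X$, and in particular $\vol(Y_i)\to\infty$.

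\textbf{Step 3 (arithmeticity).} If $[{\rm Comm}_G(\Gamma):\Gamma]=\infty$ we are done by Theorem~C. Otherwise ${\rm Comm}_G(\Gamma)$ is a lattice containing $\Gamma$ with finite index; replacing $\Gamma$ by it --- legitimate, since the associated manifold is finitely covered by $M$ and so still contains infinitely many geodesic surfaces, and arithmeticity passes between commensurable lattices --- we may assume ${\rm Comm}_G(\Gamma)=\Gamma$ and must reach a contradiction. Qualitative equidistribution does not suffice: writing two nearby plaques of some $Y_i$ in product coordinates around a fixed point of $Y_1$ merely reproduces elements of $\Gamma$. Instead one invokes an \emph{effective} equidistribution theorem for the periodic orbits $Y_i$, with an error term saving a power of $\vol(Y_i)$, together with the linearization technique controlling the part of a periodic $H$-orbit that lies near the singular set attached to the proper intermediate subgroups. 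This locates, inside a suitable $Y_i$, a long almost-closed $H$-trajectory whose return displacement is simultaneously very small and genuinely transverse to $H$; reading off its holonomy produces an element $\delta\in G\setminus\Gamma$ carrying a finite-index subgroup of a conjugate of some surface group into $\Gamma$, i.e.\ $\delta\in{\rm Comm}_G(\Gamma)\setminus\Gamma$. This contradicts ${\rm Comm}_G(\Gamma)=\Gamma$, so $\Gamma$ must be arithmetic.

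\textbf{The main obstacle.} Steps 1 and 2 are essentially formal once one grants Ratner-style measure rigidity and the (elementary) maximality of $\PGL_2(\bbr)$ in $\PGL_2(\bbc)$. All the real difficulty is in Step 3: the qualitative equidistribution of the $Y_i$ must be upgraded to a quantitative statement with an explicit polynomial rate in $\vol(Y_i)$. This demands quantitative non-divergence for unipotent trajectories, effective closing and avoidance estimates, and a delicate analysis --- via linearization, using degree and height bounds for the relevant subvarieties --- of how a periodic $H$-orbit can fail to equidistribute because it spends too long near the variety of intermediate subgroups. One then has to check that the element $\delta$ manufactured from the return displacement genuinely does not lie in $\Gamma$, so that the self-commensurating reduction is actually contradicted. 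This effective and geometric package is the technical heart of the argument.
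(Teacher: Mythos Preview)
Your Steps~1 and~2 are fine; the equidistribution $\mu_i\to m_X$ via Mozes--Shah and the maximality of $H$ in $G$ is exactly Theorem~\ref{thm:equidist} in the paper, and the paper uses it too.

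The gap is Step~3, and it is not merely a missing detail but a missing idea. You propose to manufacture an element $\delta\in{\rm Comm}_G(\Gamma)\setminus\Gamma$ from the ``return displacement'' of an almost-closed $H$-trajectory inside some $Y_i$. But the return displacement of a trajectory inside the closed orbit $Y_i=\Gamma g_iH$ is, by definition, an element of $g_i^{-1}\Gamma g_i$ lying near $H$; conjugating back, you recover an element of $\Gamma$, not a new commensurator element. Closeness of two distinct orbits $Y_i,Y_j$ fares no better: an element carrying a plaque of $Y_i$ near a plaque of $Y_j$ has no reason to conjugate any finite-index subgroup of $\Gamma$ into $\Gamma$ --- commensuration is a rigid arithmetic condition, and effective equidistribution, however strong the rate, gives you only metric proximity. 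No known mechanism converts polynomial equidistribution of periodic $H$-orbits into commensurator elements, and your ``Main obstacle'' paragraph is really a confession that Step~3 is a hope, not an argument.

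The paper's route is entirely different and bypasses commensurators until the very end. It proves a superrigidity statement (Theorem~\ref{thm:sup-rigid}): for every place $v$ and Galois embedding $\sigma$ with $\sigma(\Gamma)$ unbounded in $\sG(\gfd_v)$, the embedding extends continuously to $G$. The heart of this is Proposition~\ref{thm:psi-circle-circle}: one builds a $\Gamma$-equivariant measurable boundary map $\Psi:\mathbb S^2\to\mathbb P\lf_v\times\mathbb P\lf_v$ and shows, using the infinitely many geodesic surfaces, that $\Psi$ sends almost every circle into the graph $\tc_{g_C}$ of a M\"obius transformation. From this one deduces (Lemma~\ref{lem:Psi-rational}) that $\lf_v=\bbc$ and $\Psi$ is rational, whence the continuous extension. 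The equidistribution you proved in Step~2 enters here only as a tool (Theorem~\ref{thm:equidist}, Corollary~\ref{cor:equidist}) inside the proof of the Main Lemma~\ref{lem:basic-lemma}, which controls a cocycle along the periodic $H$-orbits. Arithmeticity then follows from superrigidity by the standard restriction-of-scalars argument of Margulis (\S\ref{sec:proofs}), not from Theorem~C.
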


The statement in Theorem~\ref{thm:main-Mnfld} answers affirmatively a question asked by A.~Reid and C.~McMullen, see~\cite[\S8.2]{McR-Re} and~\cite[Qn.~7.6]{DHM-Prob}. 

\medskip

This paper is essentially a more detailed version of~\cite{MM}. 
More explicitly, several measurability statements were taken for granted in~\cite{MM}, we provide their more or less standard proofs here; moreover, this paper contains a more elaborate version of the proof of Proposition~\ref{thm:psi-circle-circle} 
when compared to the proof given in~\cite{MM}.  Overall, our goal has been to make this paper as self contained as possible. 

Shortly after the appearance of~\cite{MM}, Bader, Fisher, Miller, and Stover~\cite{BFMS} proved that if a finite volume hyperbolic $n$-manifold, $\bbh^n/\Gamma$, contains infinitely many maximal totally geodesic subspaces of dimension at least 2, then $\Gamma$ is arithmetic --- Theorem~\ref{thm:main-Mnfld} is a special case. Their proof and ours both use a superrigidity theorem to prove arithmeticity, but the superrigidity theorems and their proofs are quite different. 

\medskip

In view of Theorem~C, we get the following from Theorem~\ref{thm:main-Mnfld}. If $M=\bbh^3/\Gamma$ 
is a closed hyperbolic $3$-manifold which contains infinitely many totally geodesic surfaces, the index of $\Gamma$
in its commensurator is infinite. 

As was mentioned above the arithmeticity theorem for irreducible lattices in higher rank Lie groups was proved using the superrigidity Theorem~B. Similarly, Theorem~\ref{thm:main-Mnfld} follows from Theorem~\ref{thm:sup-rigid} below which is a rigidity type result.

\subsection*{A rigidity theorem} 

Let $\gfield$ be the number field and $\G$ the connected, semisimple $\gfield$-group of adjoint type associated to $\Gamma$,  see~\S\ref{sec:notation} also~\cite{Margulis-Book, MaclachlanReid}. In particular, $\gfield\subset\bbr$, 
$\Gamma\subset\G(\gfield)$, and $\G$ is $\bbr$-isomorphic to ${\rm PO}(3,1)$ --- 
the connected component of the identity in the Lie group $\G(\bbr)$ is isomorphic to $\PGL_2(\bbc)$.

Let $\qpl$ denote the set of places of $\gfield$. 
For every $v\in\qpl$, let $\gfield_v$ be the completion of $\gfield$ at $v$ and let $\places_v$ be the set of Galois embeddings $\sigma:\gfield\to\gfield_v$.

For any $v\in\qpl$ and any $\sigma\in\places_v$, we let $\sbG$ denote the algebraic group 
defined by applying $\sigma$ to the equations of $\G$. Let $v\in\qpl$ and $\sigma\in\places_v$, then $\sigma(\Gamma)\subset\sbG$ is Zariski dense.



\begin{thm}\label{thm:sup-rigid}
Let $M=\bbh^3/\Gamma$ 
be a closed hyperbolic $3$-manifold. Assume further that $M$ contains infinitely many totally geodesic surfaces.
Let $\gfield$ and $\G$ be as above. 

If $v\in\qpl$ and $\sigma\in\places_v$ are so that 
$\sigma(\Gamma)\subset\sG(\gfd_v)$ is unbounded,
then $\sigma$ extends to a continuous homomorphism from $\G(\bbr)$ to $\sG(\gfd_v).$
\end{thm}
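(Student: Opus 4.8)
The plan is to deduce Theorem~\ref{thm:sup-rigid} from a superrigidity statement for the (essentially rank-one) situation at hand, using as the crucial dynamical input the infinitude of totally geodesic surfaces. First I would reinterpret the hypothesis geometrically: each totally geodesic surface in $M=\bbh^3/\Gamma$ corresponds to a closed orbit of a copy of $\PGL_2(\bbr)$ (or more precisely a conjugate of $H=\mathrm{PO}(2,1)$) on $\Gamma\backslash\PGL_2(\bbc)$, and having infinitely many of them produces an infinite sequence of such closed $H$-orbits. By a compactness/equidistribution argument — Ratner-type measure classification for the unipotent parts of $H$, or the linearization technique together with a recurrence argument — one extracts from this sequence either an escape of mass that is prevented by cocompactness of $M$, or a limiting $H$-invariant homogeneous measure supported on all of $\Gamma\backslash\PGL_2(\bbc)$. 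In other words, the closed geodesic surfaces \emph{equidistribute}, and this forces $\Gamma$ (equivalently its image under any $\sigma$) to satisfy a strong rigidity constraint: the Zariski closure of the relevant orbit data must be the whole group.

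Next I would set up the boundary-map / cocycle machinery. Fix $v\in\qpl$ and $\sigma\in\places_v$ with $\sigma(\Gamma)$ unbounded in $\sG(\gfd_v)$. One wants to build a $\Gamma$-equivariant measurable map from the boundary of $\bbh^3$ (i.e. $\mathbb{P}^1(\bbc)$, a $\PGL_2(\bbc)$-boundary) to the appropriate flag variety of $\sG$ over $\gfd_v$; unboundedness of $\sigma(\Gamma)$ together with amenability of the boundary action and a proximality argument yields such a map. The content then is to upgrade this measurable equivariant map to a rational map defined over $\gfd_v$, using that its essential image is not contained in a proper subvariety — and this is exactly where the infinitely-many-geodesic-surfaces hypothesis is used a second time: the totally geodesic surfaces give many $H$-orbits whose boundary circles are Zariski dense in $\mathbb{P}^1(\bbc)$ in a sufficiently robust sense, so the boundary map cannot degenerate. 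Once the boundary map is algebraic and nonconstant, standard arguments (à la Margulis, Theorem~B) promote it to an $\gfd_v$-homomorphism $\G\to\sG$ of algebraic groups, whose restriction to $\Gamma$ agrees with $\sigma$; taking $\gfd_v$-points gives the desired continuous extension of $\sigma$ to $\G(\bbr)\to\sG(\gfd_v)$.

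The main obstacle — and the heart of both this paper and of Proposition~\ref{thm:psi-circle-circle} referenced in the introduction — is precisely the passage from the measurable equivariant boundary map to an algebraic one \emph{without} the higher-rank hypothesis that powers Theorem~B. In rank one there is no abundance of commuting unipotents to run the usual Margulis argument, so one must instead exploit the specific geometry: the infinitely many geodesic surfaces provide infinitely many circles in $\mathbb{P}^1(\bbc)$ on which the boundary map is controlled, and one shows these circles (and their images) accumulate in a way that is incompatible with the boundary map being anything other than the restriction of a Möbius-type algebraic map. Concretely, I expect the argument to run: (i) show each closed $H$-orbit forces the boundary map, restricted to the corresponding circle, to intertwine the two circle actions, i.e.\ to be a "circle-to-circle" map; (ii) show a measurable map intertwining the $\PGL_2(\bbr)$-actions on two circles is automatically algebraic (this is the self-contained statement flagged as Proposition~\ref{thm:psi-circle-circle}); (iii) let the circles vary over the infinite family and pass to a limit, using equidistribution of the surfaces to conclude the global boundary map is algebraic on a set of full measure, hence rational on $\mathbb{P}^1(\bbc)$. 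Steps (ii) and (iii) are where the real work lies; step (i) is a consequence of unipotent invariance of the relevant measures, and the final promotion to a group homomorphism is then routine.
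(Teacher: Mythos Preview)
Your high-level plan matches the paper's: build a $\Gamma$-equivariant measurable boundary map, use the infinite family of geodesic surfaces and equidistribution to force it to be circle-preserving, upgrade to rationality, then invoke Margulis's argument to extend $\sigma$. However, your description of the intermediate steps mislabels what does what, and omits one conclusion the paper needs.

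First, Proposition~\ref{thm:psi-circle-circle} is not the statement that a circle-intertwining measurable map is automatically algebraic. It asserts only that the global map $\Psi:\mathbb S^2\to\mathbb P\lf_v\times\mathbb P\lf_v$ exists, is non-atomic on almost every circle, and sends almost every circle $C$ into a graph $\mathfrak C_{g_C}$ of some linear fractional transformation. The passage to rationality is a separate step (Lemma~\ref{lem:Psi-rational}), and it does not proceed by your step~(ii)/(iii) limiting argument. Instead, the paper uses \emph{inversions}: pairs of points $(\xi,\xi')$ linked with $C$ induce an inversion of $C$ via the pencil of circles through them, and by Proposition~\ref{thm:psi-circle-circle} the map $\Psi$ carries these to inversions of $\mathfrak C_{g_C}$. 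A positive-measure family of inversions generates $\PSL_2(\bbr)$, so one obtains a measurable and hence continuous homomorphism $\PSL_2(\bbr)\to\PGL_2(\lf_v)$.

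Second, this last homomorphism is what forces $\lf_v=\bbc$: there is no field monomorphism from $\bbr$ into a non-Archimedean local field, so the target place must be Archimedean. You do not mention this, but it is essential --- otherwise one would still need to rule out non-Archimedean $v$ before concluding. Only after $\lf_v=\bbc$ is established does the paper show $\Psi$ agrees a.e.\ with a rational map (via the two-variable rationality lemma of~\cite{Margulis-Arith-Inv}) and then invoke~\cite[\S1.3]{Margulis-Arith-Inv} for the continuous extension.

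Third, your step~(i) is right in spirit but the mechanism is different: the paper does not show the restriction of $\Psi$ to a boundary circle of a closed $H$-orbit ``intertwines the two circle actions.'' Rather, it runs a random-walk/cocycle argument (the Main Lemma~\ref{lem:basic-lemma}) on each closed $H$-orbit to show the cocycle drifts to infinity at a definite rate, which forces $\Psi$ on that circle into the graph $\mathfrak C_i$ corresponding to the Zariski closure of $\sigma(\Delta_i)$. Equidistribution (Mozes--Shah) is then used to pass from these special circles to almost every circle --- this is the content of your step~(iii), and here your description is accurate.
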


Theorem~\ref{thm:main-Mnfld} follows from Theorem~\ref{thm:sup-rigid}. 
We will recall the argument from~\cite{Margulis-Arith-Inv} in~\S\ref{sec:proofs} --- indeed the group $\bf F$ in the definition of an arithmetic group is the Zariski closure of $\Gamma$ in the restriction of scalars group ${\rm R}_{\gfield/\bbq}(\G)$, see also~\cite[Ch.~IX]{Margulis-Book}.

\medskip

The proof of Theorem~\ref{thm:sup-rigid} is based on the study of certain $\Gamma$-equivariant measurable maps from 
$\partial\bbh^3=\mathbb S^2$ into projective spaces --- equivariant maps of this kind also play a pivotal role in the proof of the {\em strong rigidity} theorem by Mostow and the proof of the {\em superrigidity} theorem by Margulis. 

Indeed the proof in~\cite{Margulis-Arith-Inv} is based on showing that an a priori only measurable {\em boundary map} agrees with a rational map almost surely; this rationality is then used to find the desired continuous extension. Our strategy here is to show that if $M$ contains infinitely many totally geodesic surfaces, a certain $\Gamma$-equivariant measurable map on $\mathbb S^2$ is almost surely rational, see Proposition~\ref{thm:psi-circle-circle}. 
In ~\S\ref{sec:proofs} we use Proposition~\ref{thm:psi-circle-circle} to complete the proof of 
Theorem~\ref{thm:sup-rigid}, see~\cite{Margulis-Arith-Inv}. 

\medskip

We end the introduction by mentioning that in this paper the discussion is restricted to the case of closed hyperbolic $3$-manifolds; however, our method extends to the case of finite volume hyperbolic $3$-manifolds. 
Indeed our argument rests upon investigating certain properties of a cocycle which will be introduced in \S\ref{sec:basic-lemma}. 
The extension to finite volume hyperbolic $3$-manifolds requires some estimates for the growth rate of this cocycle.  
The desired estimates may be obtained using a similar, and simpler, version of systems of inequalities in~\cite{EMM-Upp}.

\subsection*{Acknowledgement} We would like to thank D.~Fisher, C.~McMullen, H.~Oh, and A.~Reid for their helpful comments on earlier versions of this paper.

\section{Preliminaries and notation}\label{sec:notation}
Let $G=\PGL_2(\bbc)$ and let $\Gamma\subset G$ be a lattice.
Let $X=G/\Gamma$ and let $\vol_X$ (or simply $\vol$) denote the $G$-invariant probability measure on $X$. 
We let $\pi$ denote the natural projection from $G$ to $X$. Also let $K={\rm SU}(2)/\{\pm I\}$.
 
We let $H=\PGL_2(\bbr)$. For every $t\in\bbr$, let
\be\label{eq:def-a-t}
a_t=\begin{pmatrix}e^{t/2}&0\\0&e^{-t/2}\end{pmatrix};
\ee
note that $a_t\in H$ for all $t\in\bbr$. For every $\theta\in[0,2\pi]$, $r_\theta\in\PGL_2(\bbr)$ denotes the rotation with angle $\theta$.

The bundle of oriented frames over $\bbh^3=K\backslash G$ may be identified with $G$.
The left action of $\{a_t:t\in\bbr\}$ on $G$ and $G/\Gamma$ induces 
the frame flow on the frame bundles of $\bbh^3$ and $M$, respectively. 
For any $g\in G$ the image of $Hg$ in $\bbh^3$ is a geodesic embedding of $\bbh^2$ into $\bbh^3$. 
In this setup, a totally geodesic surface in $M=K\backslash G/\Gamma$ lifts to a closed orbit of $H$ in $X$.

%
%

\subsection{The number field $\gfield$ and the $\gfield$-group $\G$}\label{sec:nf-gp}
Since $\Gamma$ is finitely generated, there exists a finitely generated field $\gfield$ and a connected, semisimple $\gfield$-group $\G$ 
of adjoint type so that $\gfield\subset\bbr$, $\Gamma\subset\G(\gfield)$, and $\G$ is $\bbr$-isomorphic to ${\rm PO}(3,1)$.
In view of local rigidity of $\Gamma$,~\cite[Thm.~0.11]{GarRag-FD}, 
$\gfield$ is indeed a number field,  see also~\cite{Selb, Weil-1, Weil-2}. 
We will refer to the pair $(\gfield,\G)$ as the number field and the group associated to $\Gamma$, see~\cite{Margulis-Book, MaclachlanReid}.

Let $\qpl$ denote the set of places of $\gfield$. 
For every $v\in\qpl$, let $\gfield_v$ be the completion of $\gfield$ at $v$ and let $\places_v$ be the set of Galois embeddings $\sigma:\gfield\to\gfield_v$.

With this notation we let $(v_0,{\rm id})$ be the pair which gives rise to the lattice $\Gamma$ in $G$ ---
recall that the connected component of the identity in the Lie group $\G(\bbr)$ is isomorphic to $\PGL_2(\bbc)$.  

For any $v\in\qpl$ and any $\sigma\in\places_v$, we let $\sbG$ denote the $\sigma(\gfield)$-group 
defined by applying $\sigma$ to the coefficients of the defining equations of $\G$. Let $v\in\qpl$ and $\sigma\in\places_v$, then $\sigma(\Gamma)\subset\sbG$ is Zariski dense.

Note that $\G$ is isomorphic to $\PGL_2\times\PGL_2$ over $\bbc$. More generally, for every $v\in\qpl$, 
there exists an extension $\lf_v/\gfield_v$ of degree at most 2 so that $\sbG$ splits over $\lf_v$. That is:
\be\label{eq:G-lfv}
\text{$\sbG$ is isomorphic to $\PGL_2\times\PGL_2$ over $\lf_v$.}
\ee

Let $\Delta\subset\Gamma$ be a non-elementary Fuchsian group; in the above notation, we have the following. 
Let $\H_\Delta$ be the Zariski closure of $\Delta$ in $\G$. Then the Lie group $\H_\Delta(\bbr)$ is locally isomorphic to $\SL_2(\bbr)$.

For every $v\in\qpl$ and every $\sigma\in\places_v$, let $\sH_\Delta\subset\sbG$ denote the $\sigma(\gfield)$-group obtained by applying $\sigma$ to the defining coefficients of $\H_\Delta$ --- note that $\sH_\Delta$ is the Zariski closure of $\sigma(\Delta)$ in $\sbG$.

\begin{lemma}\label{lem:proj}
Let $\Delta\subset\Gamma$ be a non-elementary Fuchsian group.
Assume that $\sigma(\Delta)$ is unbounded in 
$
\sbG(\lf_v)=\PGL_2(\lf_v)\times\PGL_2(\lf_v).
$
Then there exists some $g\in\PGL_2(\lf_v)$ so that
\[
\sH(\lf_v)\cap\{(h,ghg^{-1}):h\in\PGL_2(\lf_v)\}
\]
is a subgroup of index at most $8$ in $\sH(\lf_v)$.
\end{lemma}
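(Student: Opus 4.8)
The plan is to analyze the subgroup $\sH_\Delta(\lf_v)$ of the product $\PGL_2(\lf_v)\times\PGL_2(\lf_v)$ using the structure theory of algebraic subgroups of a product of two copies of $\PGL_2$, together with Goursat's lemma. First I would note that $\sH_\Delta$ is a connected $\sigma(\gfield)$-subgroup of $\sbG$ whose real points (under the embedding giving $\Delta$) are locally isomorphic to $\SL_2(\bbr)$; since being three-dimensional and non-solvable is a statement about the algebraic group, $\sH_\Delta$ is a three-dimensional connected semisimple group, hence (over the splitting field $\lf_v$) isogenous to $\PGL_2$. Let $p_1,p_2:\sbG\to\PGL_2$ denote the two coordinate projections. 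Because $\sigma(\Delta)$ is unbounded in $\PGL_2(\lf_v)\times\PGL_2(\lf_v)$, at least one projection $p_i(\sigma(\Delta))$ is unbounded in $\PGL_2(\lf_v)$; since $\sigma(\Delta)$ is Zariski dense in $\sH_\Delta$ and $\sH_\Delta$ is $\lf_v$-isogenous to $\PGL_2$ which is almost $\lf_v$-simple, an unbounded projection forces $p_i|_{\sH_\Delta}$ to be an isogeny onto its image, and in fact $p_i(\sH_\Delta)=\PGL_2$ (a proper connected subgroup of $\PGL_2$ is solvable, so its $\lf_v$-points would be amenable and, being a quotient of the Zariski-dense $\sigma(\Delta)$ which is non-elementary Fuchsian, one checks this cannot happen — alternatively just use dimension count). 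So WLOG $p_1|_{\sH_\Delta}:\sH_\Delta\to\PGL_2$ is a central isogeny defined over $\lf_v$.

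Next I would apply Goursat: $\sH_\Delta$ is the graph of an $\lf_v$-isomorphism between quotients of the two $\PGL_2$ factors, but since $\PGL_2$ is centerless and adjoint, the only central isogeny $\PGL_2\to\PGL_2$ is an automorphism, and any $\lf_v$-automorphism of $\PGL_2$ is inner, i.e.\ conjugation by some $g\in\PGL_2(\lf_v)$ (using $H^1(\lf_v,\PGL_2)$ considerations, or simply that ${\rm Out}(\PGL_2)$ is trivial and all forms here are split over $\lf_v$). Concretely: from $p_1|_{\sH_\Delta}$ being an isogeny onto $\PGL_2$, the second projection $p_2$ restricted to $\sH_\Delta$ also has image $\PGL_2$ by the same argument (otherwise $\sigma(\Delta)$ would be bounded in the second factor — but we only know one factor is unbounded, so here I would instead argue that if $p_2(\sH_\Delta)$ were a proper subgroup it would be solvable, contradicting that $\sH_\Delta$ is semisimple and $p_2$ restricted to it is a morphism of algebraic groups with semisimple source, whose image is again semisimple, hence either trivial or all of $\PGL_2$; triviality would make $\sH_\Delta$ a subgroup of the first factor, and then $\sigma(\Delta)$ commutes with the whole second factor, which is consistent, so one must rule this out using that $\sH_\Delta$ should surject — actually here I would invoke that $\G$ is $\bbr$-isomorphic to ${\rm PO}(3,1)$ and $\sbG$ over $\lf_v$ has the diagonal-type structure forcing both projections nontrivial on any three-dimensional semisimple subgroup). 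Granting both projections are isogenies $\sH_\Delta\to\PGL_2$, the composite $p_2\circ(p_1|_{\sH_\Delta})^{-1}$ is an $\lf_v$-automorphism of $\PGL_2$, hence conjugation by some $g\in\PGL_2(\lf_v)$, so $\sH_\Delta=\{(h,ghg^{-1}):h\in\PGL_2\}$ over $\lf_v$, and in particular $\sH_\Delta(\lf_v)=\{(h,ghg^{-1}):h\in\PGL_2(\lf_v)\}$ exactly — which is even stronger than claimed.

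The index at most $8$ in the statement is the slack needed to handle the cases where the projections are only isogenies rather than isomorphisms, or where one passes between $\SL_2$ and $\PGL_2$ (the isogeny $\SL_2\to\PGL_2$ has degree $2$, and such factors of $2$ can appear at most a bounded number of times — here $2^3=8$ coming from the three structural choices: which projection to normalize, plus the central kernels of the two isogenies). Concretely I would bound: $[\sH_\Delta(\lf_v):\text{image of }\PGL_2(\lf_v)\text{ under }h\mapsto(h,ghg^{-1})]$ divides the product of the degrees of the two isogenies and the relevant Galois cohomology obstruction $|H^1(\lf_v,\mu_2)|$-type terms, all of which are powers of $2$ bounded by $8$. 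The main obstacle, and the step requiring genuine care, is the rigorous verification that \emph{both} coordinate projections are isogenies onto $\PGL_2$ — the hypothesis only gives unboundedness, hence nontriviality, of \emph{one} projection, and ruling out the degenerate configuration where $\sH_\Delta$ sits inside one factor requires using the specific identification of $\G$ with ${\rm PO}(3,1)$ (equivalently, that the Fuchsian subgroup $\Delta$ of the Kleinian group $\Gamma$ has Zariski closure whose image under conjugation $\sigma$ genuinely ``spreads across'' the two simple factors of $\sbG$, which is where the Galois action on the two factors — possibly swapping them, governed by the quadratic extension $\lf_v/\gfield_v$ of~\eqref{eq:G-lfv} — enters and must be tracked carefully).
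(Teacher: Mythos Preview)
Your overall strategy is correct and close in spirit to the paper's: analyze the two projections of $\sH_\Delta$ to the $\PGL_2$ factors and identify the resulting automorphism of $\PGL_2$ as inner. Where the paper passes through the simply connected cover (an $\lf_v$-form $\mathbf{L}$ of $\SL_2$, shown to be $\SL_2$ itself by the unboundedness hypothesis) and then bounds $[\PGL_2(\lf_v):\PSL_2(\lf_v)]\leq 8$, you work directly with the adjoint group $\sH_\Delta$ via Goursat. Your route is in fact sharper: once both projections are $\lf_v$-isomorphisms onto $\PGL_2$, one gets $\sH_\Delta(\lf_v)=\{(h,ghg^{-1}):h\in\PGL_2(\lf_v)\}$ with index $1$, not merely $\leq 8$. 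The paper's slack comes only from threading through $\SL_2$.

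The one genuine gap is exactly the step you yourself flag: ruling out that $\sH_\Delta$ lies in a single factor. Your gestures toward ``the diagonal-type structure'' of $\sbG$ and ``tracking the Galois action'' are not arguments, and the hypothesis gives unboundedness of only one projection. The paper's fix is short and does not use any special feature of the splitting field: suppose $\sH_\Delta$ meets a factor nontrivially. That factor is normal in $\sbG$, so the intersection is normal in $\sH_\Delta$; since $\sH_\Delta$ is centerless (it sits inside the centerless group $\PGL_2\times\PGL_2$) and almost simple, the intersection must equal $\sH_\Delta$. Hence $\sH_\Delta$ is a full factor, in particular normal in $\sbG$. Now $\H_\Delta(\gfield)$ and $\G(\gfield)$ are Zariski dense in $\H_\Delta$ and $\G$, and $\sigma$ carries them to Zariski-dense subgroups of $\sH_\Delta$ and $\sbG$; normality therefore transfers back to give $\H_\Delta\vartriangleleft\G$, contradicting that $\G$ is simple over $\bbr$ while $\H_\Delta$ is a proper nontrivial subgroup. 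With this in hand, both $p_i|_{\sH_\Delta}$ are injective morphisms between connected adjoint type-$A_1$ groups over $\lf_v$, hence $\lf_v$-isomorphisms, and your Goursat argument finishes cleanly.
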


\begin{proof}
The group $\sH$ is isogeneous to $\SL_2$, in particular, it is a proper algebraic subgroup of $\sbG$.
Moreover, $\sH$ intersects each factor of $\sbG$ trivially.
Indeed, $\sbG$ has trivial center; therefore, if $\sH$ intersects a factor non-trivially, 
then this intersection is normal in $\sH$ which implies that $\sH\vartriangleleft\sbG$. 
Now since $\H(\gfield)$ and $\G(\gfield)$
are Zariski dense in $\H$ and $\G$, respectively, we get that $\H\vartriangleleft\G$; this is a contradiction.

In consequence, there exists an $\lf_v$-group $\bf L$ which is an $\lf_v$-form of $\SL_2$, i.e., either arising from a division algebra 
or $\SL_2$, and an $\lf_v$-isogeny $\varphi:{\bf L}\to\sH$, so that $\sH(\lf_v)$ contains $\varphi({\bf L}(\lf_v))$
as a subgroup of finite index.  We claim $\bf L$ is indeed $\SL_2$. 
The group $\sigma(\Delta)$ is unbounded, therefore, the group $\sH(\lf_v)$ is unbounded. Hence, 
$\sH$ is $\lf_v$-isotropic, this implies the claim.

Since $\sH$ is a group of type $A_1$, the isogeny $\varphi$ is inner, i.e.,
it is induced by conjugation with an element $g\in \PGL_2(\lf_v)$. 

The claim regarding the index follows as $[\PGL_2(\lf):\SL_2(\lf)]\leq 8$ for any local field $\lf$ of characteristic zero.
\end{proof}

\subsection{Lemmas from hyperbolic geometry}\label{sec:hyp-geom}
In this section we will recrod some basic facts from hyperbolic geometry to be used in the sequel. 
Let us begin by recalling the hyperbolic law of cosines. 
Let $pqr$ be a hyperbolic triangle and let $\theta$ be the angle opposite to the edge $\overline{qr}$. Then
\be\label{eq:hyp-law-cos}
\cosh(\overline{qr})= \cosh(\overline{pq})\cosh(\overline{pr})-\cos(\theta)\sinh(\overline{pq})\sinh(\overline{pr})
\ee

Let $(\mathcal T, \dt)$ denote either a regular tree equipped with the usual path metric or a hyperbolic space equipped with the hyperbolic metric. We fix a base point $o\in\mathcal T$.

\begin{lemma}\label{lem:hyp-1}
Let $\{p_{n}: n=0,1,\ldots\}\subset\mathcal T$ with $p_0=o$. Assume that there exist some $L_1, L_2, N_0>1$ so that 
$\dt(p_{n},p_{(n+1)})\leq L_1$ for all $n$ and 
\[
\text{$\dt(p_n,o)\geq n/L_2\;\;$ for all $n>N_0$.}
\]
Then there exists a unique geodesic $\xi=\{\xi_t:t\in\bbr\}\subset\mathcal T$ with $\xi_0=o$ so that 
\[
p_n\to\xi_\infty\in\partial\mathcal T.
\]
Moreover, for every $\vare>0$ there exists some $N=N(L_1,L_2,N_0,\vare)$ so that for all $n>N$ we have
$\dt(p_n,\xi)\leq \vare n$. 
\end{lemma}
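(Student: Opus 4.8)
The plan is to exploit the standard fact that a sequence making definite linear progress away from the basepoint in a Gromov hyperbolic (or tree) metric space is a quasigeodesic ray, and then to invoke the Morse lemma (stability of quasigeodesics) to compare it with a genuine geodesic. First I would check the quasigeodesic property: for $m<n$, on the one hand $\dt(p_m,p_n)\le L_1(n-m)$ by the triangle inequality and the step bound; on the other hand, once $m,n>N_0$ one has $\dt(p_m,p_n)\ge \dt(p_n,o)-\dt(p_m,o)$, and the hypothesis $\dt(p_n,o)\ge n/L_2$ together with the upper bound $\dt(p_m,o)\le L_1 m$ is \emph{not} immediately enough; the cleaner route is to first establish that the sequence is ``monotone up to bounded error'' — i.e.\ that $\dt(p_m,o)+\dt(p_m,p_n)\le \dt(p_n,o)+C$ for a constant $C=C(L_1,L_2,N_0)$ — using thin triangles. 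Concretely, in a $\delta$-hyperbolic space, if $\dt(p_n,o)$ grows linearly while consecutive points are $L_1$-close, the Gromov products $(p_m\mid o)_{p_n}$ stay bounded, which is exactly the statement that the broken path $p_0p_1\cdots p_n$ is an $(L,L)$-quasigeodesic with constants depending only on $L_1,L_2,N_0,\delta$ (and $\delta$ is determined by $\mathcal T$).

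Granting that the concatenated path through the $p_n$ is a quasigeodesic, the sequence converges to a well-defined boundary point $\xi_\infty\in\partial\mathcal T$: the Gromov products $(p_m\mid p_n)_o\to\infty$ as $m,n\to\infty$, which is the definition of a Cauchy sequence in the boundary compactification. Then I would take $\xi$ to be the unique geodesic ray from $o$ to $\xi_\infty$ (uniqueness in a tree is immediate; in hyperbolic space $\bbh^3$ geodesic rays from a given point to a given boundary point are unique). The Morse lemma now gives a constant $R=R(L_1,L_2,N_0,\mathcal T)$ with $\dt(p_n,\xi)\le R$ for all $n$. This already gives boundedness; to upgrade the bound $R$ to $\vare n$ I would simply observe that $R\le\vare n$ as soon as $n\ge R/\vare$, so one may take $N(L_1,L_2,N_0,\vare)=\max(N_0, R/\vare)$ — there is nothing deeper needed for the ``moreover'' clause, it is just a restatement of the uniform bound with $n$ large.

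The one point requiring a little care, and the place I expect to spend the most effort, is making the convergence-and-comparison argument uniform, i.e.\ tracking that all constants depend only on $L_1,L_2,N_0$ and on $\mathcal T$ (equivalently on the hyperbolicity constant $\delta$ and, in the tree case, trivially). For a tree the whole argument is elementary and explicit: the nearest-point projection of $p_n$ onto the ray $\xi$ can be estimated directly from the distances $\dt(p_n,o)$ and $\dt(p_m,p_n)$ using the $0$-thin condition, and one gets clean linear estimates with no quasi-geodesic machinery at all. For $\bbh^3$ I would either cite the Morse/stability lemma in the form ``an $(\lambda,\epsilon)$-quasigeodesic ray stays within Hausdorff distance $R(\lambda,\epsilon,\delta)$ of the geodesic ray with the same endpoints'' (e.g.\ from Bridson--Haefliger), or give the short direct hyperbolic-trigonometry computation using the law of cosines \eqref{eq:hyp-law-cos} recorded just above: if $p_n$ were far from $\xi$ while both $\dt(p_n,o)$ and $\dt(p_{n+k},o)$ are large and $\dt(p_n,p_{n+k})\le L_1 k$, the cosine rule forces the angle at $o$ in the triangle $o\,p_n\,p_{n+k}$ to be small, contradicting linear progress; iterating controls the distance to $\xi$. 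I would present the tree case and the $\bbh^3$ case in parallel, extracting a single constant $N(L_1,L_2,N_0,\vare)$ at the end.
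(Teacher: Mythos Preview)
The paper does not supply a proof of this lemma; it is stated as a standard fact about trees and hyperbolic space. Your approach via the Morse lemma, however, has a real gap: the hypotheses do \emph{not} force $(p_n)$ to be a quasigeodesic, so stability of quasigeodesics cannot be invoked and your uniform bound $\dt(p_n,\xi)\le R$ is false in general. Concretely, in a regular tree with $L_1=L_2=2$: run out along a ray $\xi=(v_k)$ at full speed to $p_{10}=v_{20}$; detour into the sibling branch at $v_{19}$, reaching $p_{25}=w_{48}$; retrace to $p_{39}=w_{20}$ and $p_{40}=v_{20}$; then continue along $\xi$. One checks $\dt(p_n,o)\ge n/2$ at every step (with equality at $n=40$), yet $p_{10}=p_{40}$, destroying any quasigeodesic lower bound, and $\dt(p_{25},\xi)=29>25$. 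Your proposed ``monotone up to bounded error'' and ``$(p_m\mid o)_{p_n}$ bounded'' claims fail for the same example. Worse, by inserting such detours at exponentially growing times one obtains a single admissible sequence with $\limsup_n \dt(p_n,\xi)/n>1$; so the ``moreover'' clause, read literally with $N$ depending only on $(L_1,L_2,N_0,\vare)$, cannot hold as stated --- the obstruction is intrinsic, not a weakness of your method.

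What \emph{does} go through, and what the paper actually uses downstream, is the convergence $p_n\to\xi_\infty\in\partial\mathcal T$ together with the Gromov--product estimate $(p_n\mid\xi_\infty)_o\ge n/L_2-O_{L_1}(1)$. In a tree this is immediate from the $0$-hyperbolic inequality $(p_n\mid p_m)_o\ge\min_{n\le k<m}(p_k\mid p_{k+1})_o$ and the bound $(p_k\mid p_{k+1})_o\ge k/L_2-L_1/2$ for $k>N_0$. In $\bbh^3$ one argues with~\eqref{eq:hyp-law-cos}: the angle $\alpha_k$ at $o$ in the triangle $o\,p_k\,p_{k+1}$ satisfies $1-\cos\alpha_k\le\cosh(L_1)/\sinh^2(k/L_2)$, so $\alpha_k=O(e^{-k/L_2})$ and the unit tangents at $o$ to $[o,p_k]$ form a Cauchy sequence, whence $p_n$ converges to a unique boundary point. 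This is the content actually invoked in \S\ref{sec:equiv-map} and \S\ref{sec:proof-of-prop}; no quasigeodesic machinery is needed, and you should reorganize your argument around this estimate rather than the Morse lemma.
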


We will also need the following lemma; the proof of this lemma uses Lemma~\ref{lem:hyp-1} and Egorov's theorem. 

\begin{lemma}\label{lem:hyp-2}
Let $(\Theta,\vartheta)$ be a Borel probability space. 
Let $\psi:\Theta\to\partial\mathcal T$ and $u:\bbz^{\geq0}\times\Theta\to \mathcal T$ be two Borel maps satisfying the following.
 
\begin{enumerate}
\item $\vartheta(\psi^{-1}\{p\})=0$ for every $p\in\partial \mathcal T$. 
\item $u(0,\theta)=o$ for a.e.\ $\theta\in\Theta$.
\item There exists some $L_1\geq 1$ so that for a.e.\ $\theta\in\Theta$ we have 
\[
\text{$\dt(u(n,\theta),u(n+1,\theta)\leq L_1\;\;$ for all $n$.}
\]
\item There exists some $L_2\geq1$ and for a.e.\ $\theta\in\Theta$ there exists some $N_\theta$ so that 
\[
\text{$\dt(u(n,\theta),u(n+1,\theta)\geq n/L_2\;\;$ for all $n>N_\theta$.}
\]
\end{enumerate}
In particular, by Lemma~\ref{lem:hyp-1} we have $\{u(n,\theta)\}$ converges to a point in $\partial\mathcal T$ for a.e.\ $\theta\in\Theta$.
Assume further that 
\be\label{eq:limit-psi-infty}
\text{$u(n,\theta)\to\psi(\theta)\;\;$ for a.e.\ $\theta\in\Theta$}.
\ee
Let $\{\xi_t\}\subset\mathcal T$ be any geodesic with $\xi_0=o$.
There exists some $c'=c'(\psi,u,L_1,L_2)$ so that for all $t\in\R$ and all $n\in\bbn$ we have 
\[
\int_\Theta \dt(u(n,\theta),\xi_t)\diff\!\vartheta(\theta)>t+\frac{n}{4L_2}-c'.
\]
\end{lemma}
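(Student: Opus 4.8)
\textbf{Proof strategy for Lemma~\ref{lem:hyp-2}.}
The plan is to pass to a single ``good'' set of positive measure on which all four hypotheses hold uniformly, apply Lemma~\ref{lem:hyp-1} there to obtain a genuine geodesic $\xi^\theta$ with $\xi^\theta_0=o$ along which the orbit $u(n,\theta)$ escapes, compare that geodesic with the fixed geodesic $\{\xi_t\}$ using the non-atomicity hypothesis (1), and then integrate a pointwise lower bound. First I would use hypotheses (2), (3), (4) and (\ref{eq:limit-psi-infty}) together with Egorov's theorem to find, for any small $\vare>0$, a Borel set $\Theta_\vare\subset\Theta$ with $\vartheta(\Theta_\vare)>1-\vare$, a single $N_0$ and a single $\vare'$ so that for $\theta\in\Theta_\vare$: $u(0,\theta)=o$, the step bound $L_1$ holds for all $n$, the lower bound $\dt(u(n,\theta),o)\ge n/(2L_2)$ holds for all $n>N_0$ (note hypothesis (4) as literally stated bounds consecutive \emph{steps}, but combined with $u(0,\theta)=o$ and a telescoping/triangle argument this yields linear escape of $u(n,\theta)$ from $o$; alternatively one reads it as the intended $\dt(u(n,\theta),o)\ge n/L_2$), and the convergence $u(n,\theta)\to\psi(\theta)$ is uniform on $\Theta_\vare$. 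On $\Theta_\vare$, Lemma~\ref{lem:hyp-1} then supplies the geodesic $\xi^\theta$ from $o$ with $\xi^\theta_\infty=\psi(\theta)$ and, for $n$ large (depending only on $\vare,L_1,L_2,N_0$), the estimate $\dt(u(n,\theta),\xi^\theta)\le\vare n$.

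Next I would establish the key pointwise inequality. Fix $t\in\R$ and $n\in\bbn$. Since $\xi^\theta$ is a unit-speed geodesic ray from $o$ and $\dt(u(n,\theta),\xi^\theta)\le\vare n$, there is a time $s=s(n,\theta)$ with $\dt(u(n,\theta),\xi^\theta_s)\le\vare n$ and $s\ge\dt(u(n,\theta),o)-\vare n\ge n/(2L_2)-\vare n$. Now I compare $\xi^\theta_s$ with $\xi_t$: by the triangle inequality,
\[
\dt(u(n,\theta),\xi_t)\ge \dt(\xi^\theta_s,\xi_t)-\vare n.
\]
The point is to bound $\dt(\xi^\theta_s,\xi_t)$ from below by something like $s+t-(\text{a Gromov product of }\psi(\theta)\text{ and }\xi_\infty\text{ seen from }o)$. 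In a tree this is exact: two geodesic rays from $o$ first travel together for a length equal to their confluence $\rho(\theta):=(\,\psi(\theta)\mid\xi_\infty\,)_o$, then diverge, so $\dt(\xi^\theta_s,\xi_t)=\max(s,t)+\max(s,t)-2\min(\rho(\theta),s,t)\ge s+t-2\rho(\theta)$ whenever $s,t\ge\rho(\theta)$, and in a $\delta$-hyperbolic space the same holds up to an additive $O(\delta)$. Hence
\[
\dt(u(n,\theta),\xi_t)\ge t+s-2\rho(\theta)-\vare n-O(\delta)\ge t+\tfrac{n}{2L_2}-2\rho(\theta)-2\vare n-O(\delta)
\]
for $\theta\in\Theta_\vare$ and $n$ large.

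Finally I would integrate. The function $\rho(\theta)=(\psi(\theta)\mid\xi_\infty)_o$ is Borel and \emph{finite} for a.e.\ $\theta$: indeed $\rho(\theta)=\infty$ exactly when $\psi(\theta)=\xi_\infty$, and by hypothesis (1) this is a $\vartheta$-null set. Therefore $\rho\in L^0$ and, choosing $R$ large, the set $\{\rho\le R\}$ has measure close to $1$; intersecting with $\Theta_\vare$ and choosing $\vare$ small (say $\vare<1/(16L_2)$) and $n$ large enough, the integrand on that set of measure $>1-2\vare$ is at least $t+n/(2L_2)-2R-2\vare n-O(\delta)\ge t+n/(4L_2)-2R-O(\delta)$, while on the complement I simply use $\dt(u(n,\theta),\xi_t)\ge -t-\text{(trivial bound)}$... more carefully, I bound the integral below by (value on the good set)$\times(1-2\vare)$ minus $|t|$ times $2\vare$, absorbing all the $\theta$-independent losses ($2R$, $O(\delta)$, the $\vare|t|$ term, and the finitely many small $n$) into the constant $c'=c'(\psi,u,L_1,L_2)$. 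Since $t/2L_2$ beats $n/4L_2$-type slack only for $n$ comparable to $t$, one checks the bound $\int_\Theta\dt(u(n,\theta),\xi_t)\,\diff\vartheta>t+n/(4L_2)-c'$ survives for \emph{all} $t\in\R$ and \emph{all} $n\in\bbn$ after enlarging $c'$; for small $n$ or negative contributions this is automatic since the left side is $\ge 0$ while the right side is $\le t$, and for $t$ very negative the claim is trivial.

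\textbf{Main obstacle.} The delicate point is not the measure-theoretic reduction (routine Egorov) but making the geometric comparison uniform: the geodesic $\xi^\theta$ depends on $\theta$, and one must control $\dt(\xi^\theta_s,\xi_t)$ uniformly in $\theta$ over a large set. This is exactly where hypothesis (1) enters — without non-atomicity of $\psi_*\vartheta$ the confluence $\rho(\theta)$ could fail to be integrable (or even finite) and the $2R$ term could not be absorbed into $c'$. Getting the $\delta$-hyperbolic comparison $\dt(\xi^\theta_s,\xi_t)\ge s+t-2\rho(\theta)-O(\delta)$ with the right constant, and checking the final inequality holds for \emph{all} $(t,n)$ rather than just large ones, is the part requiring care.
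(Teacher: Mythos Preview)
Your geometric comparison via the Gromov product $\rho(\theta)=(\psi(\theta)\mid\xi_\infty)_o$ is essentially the same idea as the paper's direct application of the hyperbolic law of cosines to the triangle $o,\,u(n,\theta),\,\xi_t$, and the Egorov reduction is identical. However, there is a genuine gap in how you assemble the final inequality on the bad set.

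You list ``the $\vare|t|$ term'' among the losses to be absorbed into $c'$. This cannot be done: $c'=c'(\psi,u,L_1,L_2)$ must be independent of $t$. If on the bad set (measure $\le 2\vare$) you use only $\dt\ge 0$, then
\[
\int_\Theta\dt(u(n,\theta),\xi_t)\,\diff\vartheta(\theta)\;\ge\;(1-2\vare)\Bigl(t+\tfrac{n}{4L_2}-2R-O(\delta)\Bigr),
\]
which is short of $t+n/(4L_2)-c'$ by $2\vare t$; for $t\to\infty$ with $n$ fixed this is fatal. Your attempted rescue (``the left side is $\ge 0$ while the right side is $\le t$'') does not help, since for large $t$ and moderate $n$ both sides are close to $t$.

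The fix (and this is precisely what the paper does) is to use on the bad set the reverse triangle inequality together with hypothesis~(3):
\[
\dt(u(n,\theta),\xi_t)\ge \dt(o,\xi_t)-\dt(u(n,\theta),o)\ge t-L_1 n\qquad\text{for a.e.\ }\theta.
\]
This lower bound already carries the full $t$, so
\[
\int_\Theta\dt(u(n,\theta),\xi_t)\,\diff\vartheta(\theta)\ge (1-2\vare)\Bigl(t+\tfrac{n}{2L_2}-2\vare n-2R-O(\delta)\Bigr)+2\vare\bigl(t-L_1 n\bigr),
\]
and the $t$-terms combine exactly to $t$. Choosing $\vare$ small against $L_1,L_2$ (the paper takes $\vare<1/(8L_1L_2)$) leaves $t+n/(4L_2)-c'$ with $c'$ depending only on $R$, $\delta$, and the finitely many small $n$.

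One further point you should make explicit: to get $c'$ independent of the geodesic $\xi$, you need $R$ uniform in $\xi_\infty$. This is where compactness of $\partial\mathcal T$ enters (the paper uses it explicitly): hypothesis~(1) plus compactness gives, for every $\vare>0$, a single $\delta>0$ with $\vartheta\bigl(\psi^{-1}(\mathsf N_\delta(p))\bigr)<\vare$ for \emph{every} $p\in\partial\mathcal T$. In your language, $\{\rho(\theta)>R\}\subset\psi^{-1}(\mathsf N_{Ce^{-R}}(\xi_\infty))$, so $R$ can be chosen depending only on $\psi$ and $\vare$, not on $\xi_\infty$.
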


\begin{proof}
We explicate the proof when $\mathcal T$ is a hyperbolic space, the proof in the case of a tree is a simple modification.

For any $\theta\in\Theta$, let $q_\theta=u(n,\theta)$. By the hyperbolic law of cosines,~\eqref{eq:hyp-law-cos}, we have
\be\label{eq:hyp-law-psi-Theta}
\cosh(\overline{q_\theta\xi_t})= \cosh(\overline{o\xi_t})\cosh(\overline{oq_\theta})-\cos(\alpha)\sinh(\overline{o\xi_t})\sinh(\overline{oq_\theta})
\ee
where $\alpha$ is the angle between $\overline{o\xi_t}$ and $\overline{oq_\theta}$.

In view of our assumption~(1) and the compactness of $\partial\mathcal T$ for every $\vare>0$
there exists some $\delta>0$ so that for every $p\in\partial\mathcal T$ we have
\be\label{eq:using-assum-1-Theta}
\vartheta\Bigl(\psi^{-1}(\mathsf N_\delta(p))\Bigr)<\vare/2.
\ee

Fix some $0<\vare<1/8L_1L_2$. 
In view of Egorov's theorem, Lemma~\ref{lem:hyp-1} and~\eqref{eq:limit-psi-infty}, 
there exists $\Theta'\subset\Theta$ with $\vartheta(\Theta')>1-\vare/2$ 
and some $N_0\geq 1$ so that for all $\theta\in\Theta'$ and all $n>N_0$ we have 
\be\label{eq:unif-conv-Theta-psi}
\dt(u(n, \theta),o)\geq n/2L_2.
\ee

Let $\mathsf{E}:=\{\theta\in\Theta': \psi(\theta)\not\in\mathsf N_\delta(\xi_\infty)\}$
where $\xi_\infty=\lim_{t\to\infty}\xi_t\in\partial \mathcal T$. Then thanks to~\eqref{eq:using-assum-1-Theta} and
$\vartheta(\Theta')>1-\vare/2$ we have $\vartheta(\mathsf E)>1-\vare$.

Now using~\eqref{eq:hyp-law-psi-Theta} and~\eqref{eq:unif-conv-Theta-psi}, we conclude that 
\be\label{eq:law-cos-Theta-L2}
\dt(q_\theta,\xi_t)>t+\tfrac{n}{2L_2}-O_\delta(1)
\ee
for all $\theta\in\mathsf E$ and all $n>N_0$.

Note also that by our assumption~(3), we have 
$\dt(u(n,\theta),o)\leq L_1 n$ for all $\theta\in \Theta$. In particular, we get that
\be\label{eq:triv-boud-vs-cosine-Theta}
\dt(q_\theta,\xi_t)\geq t-L_1n
\ee
for all $\theta\in \Theta$ and all $n\in\bbn$.

Now, splitting the integral over $\mathsf E$ and its compliment 
and using the estimates in~\eqref{eq:law-cos-Theta-L2} and~\eqref{eq:triv-boud-vs-cosine-Theta}, we obtain the following:
\begin{align*}
\int_{\Theta}\dt(q_\theta,\xi_t)\diff\!\vartheta(\theta)&>(1-\vare)(t+\tfrac{n}{2L_2}-O_\delta(1))+ (t-L_1n)\vare\\
{}^{\vare<1/4\leadsto}&> t+\frac{n}{4L_2}+(\frac{n}{8L_2}-L_1\vare n)-O_\delta(1)\\
{}^{\vare<1/8L_1L_2\leadsto}&> t+\frac{n}{4L_2}-O_{\delta}(1).
\end{align*}
This completes the proof if we let $c'=O_\delta(1)$.
\end{proof}

It is worth mentioning that the above lemmas hold for any proper, complete, ${\rm CAT}(-1)$ space. 

\subsection{Action of $\Gamma$ on varieties}\label{sec:act-variety}
Let $\mathcal C$ be the space of circles in $\mathbb S^2=\partial\bbh^3$; 
the space $\mathcal C$ is equipped with a natural $\PGL_2(\bbc)$-invariant measure $\sigma$.
Let $\Gm$ denote the Lebesgue measure on $\mathbb S^2$.
For every $C\in\mathcal C$, let $\Gm_C$ be the Lebesgue measure on $C$. 

Here and in what follows, by a {\em non-atomic} measure we mean a measure without any atoms.  

The following general lemma will be used in the sequel.

\begin{lemma}~\label{lem:PsiGmZd}\label{cor:Fubini-Circles}
Let $\lf$ be a local field. Let $\H$ be a connected $\lf$-group which acts $\lf$-rationally on an irreducible $\lf$-variety $\bf V$. 
Assume further that $\H$ acts transitively on ${\bf V}$.
Let $\rho:\Gamma\to\H(\lf)$ be a homomorphism so that $\rho(\Gamma)$ is Zariski dense in $\H$ and let 
$
\Psi: \mathbb S^2\to{\bf V}(\lf)
$ 
be a $\Gamma$-equivariant measurable map. Then
\begin{enumerate}
\item $\Gm\Bigl(\{\xi\in\mathbb S^2:\Psi(\xi)\in{\bf W}(\lf)\}\Bigr)=0$ for any proper subvariety $\bf W\subset{\bf V}$.
\item For $\sigma$-a.e.\ $C\in\mathcal C$ and every $p\in\mathbf V(\lf)$ we have 
\[
\Gm_C\Bigl(\{\xi\in C:\Psi(\xi)=p\}\Bigr)=0.
\]
\end{enumerate}
\end{lemma}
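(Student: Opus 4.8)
The plan is to prove both statements via the same mechanism: an equivariant measurable map into a homogeneous $\lf$-variety, when pushed forward, gives a quasi-invariant measure whose support is forced to be everything by Zariski density, and atoms/lower-dimensional strata are ruled out by ergodicity considerations. First I would prove (1). Let $\Wbf \subsetneq \Vbf$ be a proper subvariety and set $A = \{\xi \in \mathbb S^2 : \Psi(\xi) \in \Wbf(\lf)\}$. Since $\Psi$ is $\Gamma$-equivariant and $\rho(\Gamma)$ acts on $\Vbf(\lf)$, the set $A$ is invariant under $\Gamma$ up to the action: precisely, for $\gamma \in \Gamma$, $\gamma^{-1} A = \{\xi : \Psi(\xi) \in \rho(\gamma)^{-1}\Wbf(\lf)\}$. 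Because $\Gamma$ acts on $\mathbb S^2 = \partial \bbh^3$ via Möbius transformations, its action on $(\mathbb S^2, \Gm)$ is ergodic (the action of a lattice on the boundary is ergodic — indeed doubly ergodic — with respect to the Lebesgue measure class). So if I can show $\Gm(A) > 0$ leads to a contradiction, I am done; the idea is that $\gamma^{-1}A$ corresponds to the subvariety $\rho(\gamma)^{-1}\Wbf$, and taking a countable family $\gamma_i$ with $\rho(\gamma_i)$ Zariski dense, the intersection $\bigcap_i \rho(\gamma_i)^{-1}\Wbf$ is a proper subvariety stable under a Zariski-dense subgroup, hence (by transitivity of $\H$ on $\Vbf$, so $\H$-invariance would force it to be all of $\Vbf$ or empty) it must be empty. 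But $\Gm(A)>0$ together with ergodicity forces $\Gm(\mathbb S^2 \setminus \bigcup_i \gamma_i A)=0$, while each point of $\bigcup \gamma_i A$ maps under $\Psi$ into $\bigcup_i \rho(\gamma_i)\Wbf(\lf)$; intersecting along the countable orbit and using Noetherianity of the Zariski topology, one extracts a finite subfamily whose intersection is a $\rho(\Gamma)$-invariant, hence $\H$-invariant (by Zariski density), proper closed subset, forcing it empty — contradiction unless $\Gm(A)=0$.

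For statement (2): by Fubini for the disintegration $\sigma = \int_{\mathcal C}(\cdots)$, it suffices to rule out, for $\sigma$-a.e.\ $C$, the existence of a point $p \in \Vbf(\lf)$ with $\Gm_C\{\xi \in C : \Psi(\xi) = p\} > 0$. Restricting $\Psi$ to a circle $C$, note that the stabilizer of $C$ in $\PGL_2(\bbc)$ is (up to index $2$) a copy of $\PGL_2(\bbr)$ acting on $C \cong \mathbb S^1$, and $\Gamma \cap \Stab(C)$ — while possibly trivial for a generic $C$ — is not what I use; instead I use that the fibers $\{\Psi = p\}$ as $p$ ranges over $\Vbf(\lf)$ partition $\mathbb S^2$ into measurable pieces, and statement (1) applied to the subvariety $\Wbf = \{p\}$ (a point is a proper subvariety since $\dim \Vbf \geq 1$; if $\dim \Vbf = 0$ the variety is a single point by irreducibility and transitivity, and the claim is either vacuous or trivial after noting $\rho(\Gamma)$ Zariski dense forces $\Vbf$ a point only if $\H$ is finite, excluded) already gives $\Gm(\Psi^{-1}\{p\}) = 0$ for every fixed $p$. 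The subtlety is that in (2) we need this simultaneously for $\sigma$-a.e.\ $C$ and \emph{all} $p$, i.e.\ we must pass from the full $\Gm$-statement to a fiberwise statement on circles. I would handle this by the following: let $B = \{(C,\xi,p) : \xi \in C, \Psi(\xi) = p, \Gm_C(\Psi^{-1}(p)\cap C) > 0\}$; project to $\mathcal C$ and suppose the image has positive $\sigma$-measure. Using that $\{C \in \mathcal C : \xi \in C\}$ has, for each fixed $\xi$, positive measure in $\mathcal C$ (circles through a point form a positive-measure subset under the natural fibration), a Fubini argument swapping the roles of the base $\mathbb S^2$ and the circle-fibers shows $\Gm(\{\xi : \exists\, C \ni \xi \text{ with } \Gm_C(\Psi^{-1}(\Psi(\xi))\cap C) > 0\}) > 0$; but for such $\xi$, either infinitely many values $\Psi(\xi)$ occur — impossible to reconcile with (1) applied to each — or a single value is attained on a positive-$\Gm$ set, contradicting (1).

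The main obstacle I anticipate is the passage in part (2) from the $\Gm$-null statement for individual points to the $\sigma$-a.e.-$C$, all-$p$ statement: this requires a careful double-Fubini argument between the Lebesgue measure $\Gm$ on $\mathbb S^2$, the disintegration of $\sigma$ over $\mathcal C$ into the $\Gm_C$, and the dual fibration of $\mathcal C$ over $\mathbb S^2$ by "circles through a given point." The key technical point is to check that these fibrations are measurably compatible (the incidence variety $\{(\xi,C): \xi\in C\}$ carries a measure projecting to $\Gm$ on one side and to $\sigma$ on the other, with the conditional measures being precisely the $\Gm_C$ and the natural measures on pencils of circles), which is where "several measurability statements" alluded to in the introduction must be made precise. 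Once that compatibility is in hand, part (2) reduces cleanly to part (1) via the observation that a positive-measure fiber $\Psi^{-1}(p)\cap C$ in positively many circles $C$ would, by integrating, produce a positive $\Gm$-measure fiber $\Psi^{-1}(p)$ in $\mathbb S^2$, contradicting part (1). I would also need the ergodicity of the $\Gamma$-action on $(\mathbb S^2,\Gm)$ for part (1), which is standard (Lebesgue measure on the boundary is $\Gamma$-ergodic for any lattice $\Gamma$ in $\PGL_2(\bbc)$), together with the Noetherian property of the Zariski topology to reduce an infinite intersection of translated subvarieties to a finite one.
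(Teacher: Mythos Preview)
Your sketch for part~(1) carries the right ingredients and the paper simply cites this as well known. For part~(2), however, your argument has a genuine gap. First, the assertion that $\{C\in\mathcal C:\xi\in C\}$ has positive $\sigma$-measure is false: circles through a fixed point form a $2$-dimensional family in the $3$-dimensional $\mathcal C$, hence a $\sigma$-null set. What you actually need is that the incidence measure on $\{(\xi,C):\xi\in C\}$ disintegrates compatibly over both projections. More seriously, your final reduction to a single $p$ does not close: from a positive-$\Gm$ set of $\xi$ each admitting \emph{some} circle $C\ni\xi$ with $\Gm_C(\Psi^{-1}(\Psi(\xi))\cap C)>0$, you cannot directly conclude $\Gm(\Psi^{-1}(p))>0$ for one fixed $p$, because the atom $p_C$ may vary with $C$. (Your route is salvageable---one must first upgrade ``some $C$'' to a positive-measure family of circles through a fixed $\xi$ in the $2$-dimensional pencil, then stereographically project from $\xi$ so that this pencil becomes the space of lines in $\bbr^2$, and finally run Fubini over parallel families of lines---but you did not supply this.)

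The paper bypasses the difficulty with a cleaner device: work on the \emph{two-point} incidence variety $\mathcal A=\{(C,\xi,\xi'):\xi,\xi'\in C\}$ and set $\mathcal B=\{(C,\xi,\xi')\in\mathcal A:\Psi(\xi)=\Psi(\xi')\}$. If part~(2) fails, a positive-$\sigma$ set of circles $C$ carries an atom $p_C$ for $(\Psi|_C)_*\Gm_C$; squaring gives positive $\Gm_C\otimes\Gm_C$-mass on $\{\Psi(\xi)=\Psi(\xi')\}$, and integrating over $C$ shows $\mathcal B$ has positive measure. Now project to $\mathbb S^2\times\mathbb S^2$: since the condition $\Psi(\xi)=\Psi(\xi')$ is \emph{independent of $C$}, Fubini immediately yields $(\Gm\otimes\Gm)\{(\xi,\xi'):\Psi(\xi)=\Psi(\xi')\}>0$, and one more Fubini on the product gives a single $\xi$ with $\Gm(\Psi^{-1}(\Psi(\xi)))>0$, contradicting part~(1). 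The passage to \emph{pairs} on the same circle is precisely what decouples the diagonal condition from the choice of circle and eliminates the varying-$p_C$ obstruction that your single-point approach runs into.
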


\begin{proof}
Part~(1) is well-known, see~\cite[Lemma 4.2]{Golds-Marg} also~\cite[Ch.~VI, Lemma 3.10]{Margulis-Book} and~\cite{Fu-Boundary}.

We prove part~(2). 
Let $\mathcal A=\{(C,\xi,\xi')\in \mathcal C\times \mathbb S^2\times \mathbb S^2:\xi,\xi'\in C\}$.
Equip $\mathcal A$ with the natural measure arising from the $G$-invariant measure $\sigma$ on $\mathcal C$ and the measure $\Gm_C$ on $C\in\mathcal C$.  

Define 
$\Phi: \mathcal A\to \mathbf V(\lf) \times \mathbf V(\lf)$ 
by $\Phi(C,\xi,\xi')=(\Psi(\xi),\Psi(\xi'))$. Assume the claim in part~(2) fails. Then 
\[
\mathcal B:=\Phi^{-1}(\{(p,p): p\in \mathbf V(\lf)\})\subset \mathcal A
\] 
has positive measure.

In view of Fubini's theorem then 
the projection of $\mathcal B$ onto $\mathbb S^2\times \mathbb S^2$ contains a positive measure subset.
That is: there exists a positive measure subset of $\mathbb S^2\times \mathbb S^2$ which gets mapped into 
$\{(p,p): p\in \mathbf V(\lf)\}$.
This implies that $\Psi$ maps a positive measure subset of $\mathbb S^2$ to a point, which contradicts part~(1).
\end{proof}

For any compact subset $E\subset \bbh^3/\Gamma$, let  
$\mathcal C_E=\{C\in\mathcal C:$ the convex hull of $C$ intersects $E$ nontrivially$\}$.


\begin{lemma}\label{lem:Egorov-Noatom}
Let the notation be as in Lemma~\ref{lem:PsiGmZd}. 
Further, assume that ${\bf V}(\lf)$ is compact and that it is equipped 
with a metric. For every $p\in{\bf V}(\lf)$ and every $r>0$, let $\mathsf N_r(p)$ 
denote the open ball of radius $r$ in this metric.

Let $E\subset\bbh^3/\Gamma$ be a compact subset with positive measure.
For every $\vare>0$ there exists a compact subset $\mathcal C_{E,\vare}\subset \mathcal C_E$ 
with $\sigma(\mathcal C_E\setminus\mathcal C_{E,\vare})\ll_E\vare^{32}\sigma(\mathcal C_E)$ 
and some $\delta>0$ with the following property. 
For every $C\in \mathcal C_{E,\vare}$ and every $p\in{\bf V}(\lf)$ we have 
\[
\Gm_C\Bigl(\{\xi\in C: \Psi(\xi)\in\mathsf N_{\delta}(p)\}\Bigr)<\vare\Gm_C(C).
\] 
\end{lemma}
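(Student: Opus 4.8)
The plan is to deduce this from Lemma~\ref{lem:PsiGmZd}(2) together with a quantitative ``uniform continuity in measure'' argument. First I would observe that Lemma~\ref{lem:PsiGmZd}(2) gives, for $\sigma$-a.e.\ $C\in\mathcal C_E$, that the pushforward measure $\Psi_*\Gm_C$ on ${\bf V}(\lf)$ is non-atomic. The goal is to upgrade this pointwise (in $C$) non-atomicity to a uniform statement on a large compact subset of $\mathcal C_E$: a single $\delta>0$ that works simultaneously for all $C\in\mathcal C_{E,\vare}$ and all $p$. The natural device is Egorov's theorem applied to a suitable sequence of functions on $\mathcal C_E$. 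Concretely, for each integer $m\geq 1$ set
\[
f_m(C)=\sup_{p\in{\bf V}(\lf)}\Gm_C\Bigl(\{\xi\in C:\Psi(\xi)\in\mathsf N_{1/m}(p)\}\Bigr)/\Gm_C(C);
\]
since ${\bf V}(\lf)$ is compact and the balls $\mathsf N_{1/m}(p)$ shrink, one checks (using that $\Psi_*\Gm_C$ is non-atomic and a finite-subcover / tightness argument) that $f_m(C)\to 0$ pointwise for $\sigma$-a.e.\ $C\in\mathcal C_E$. The functions $f_m$ are measurable in $C$ — this requires a small argument, replacing the $\sup$ over all $p$ by a $\sup$ over a countable dense subset after noting the map is upper semicontinuous in $p$ at the scale of a slightly larger ball, or simply by writing $f_m(C)$ as a countable supremum of measurable functions via a net of finite $(1/m)$-nets of ${\bf V}(\lf)$. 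Granting measurability, Egorov's theorem on the finite measure space $(\mathcal C_E,\sigma)$ yields, for any prescribed $\eta>0$, a compact (after inner regularity) subset $\mathcal C_{E,\vare}\subset\mathcal C_E$ with $\sigma(\mathcal C_E\setminus\mathcal C_{E,\vare})<\eta\,\sigma(\mathcal C_E)$ on which $f_m\to 0$ uniformly; choosing $\eta$ of the form $c_E\vare^{32}$ gives the stated measure bound, and uniform convergence produces an $m_0$ with $f_{m_0}(C)<\vare$ for all $C\in\mathcal C_{E,\vare}$. Setting $\delta=1/m_0$ gives exactly the conclusion $\Gm_C(\{\xi\in C:\Psi(\xi)\in\mathsf N_\delta(p)\})<\vare\,\Gm_C(C)$ for all $C\in\mathcal C_{E,\vare}$ and all $p\in{\bf V}(\lf)$.

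The main obstacle I anticipate is the joint measurability and the pointwise-convergence claim $f_m(C)\to 0$ — that is, making precise that ``$\Psi_*\Gm_C$ non-atomic for a.e.\ $C$'' implies the $\sup$ over $p$ of the $(1/m)$-ball masses tends to $0$ for a.e.\ $C$, uniformly-in-$p$ for fixed $C$. This is where compactness of ${\bf V}(\lf)$ is essential: for fixed non-atomic $\mu=\Psi_*\Gm_C$ and fixed $\vare'>0$, each point $p$ has a ball of some radius with $\mu$-mass $<\vare'$; by compactness finitely many such balls cover ${\bf V}(\lf)$, and then any ball of radius smaller than a Lebesgue number of this cover (relative to a slightly enlarged cover) has mass $<\vare'$; this gives $f_m(C)<\vare'$ for $m$ large depending on $C$. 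I would also need to double-check that $\mathcal C_{E,\vare}$ can be taken compact — this follows from inner regularity of $\sigma$ on the locally compact space $\mathcal C$, possibly after intersecting with a fixed compact exhaustion of $\mathcal C_E$ (note $\mathcal C_E$ itself is already essentially compact since $E$ is compact, as circles whose convex hull meets $E$ form a relatively compact family). The implicit constant in $\ll_E$ and the exponent $32$ are simply bookkeeping: one applies Egorov with error $\eta=\vare^{32}$ and absorbs any $E$-dependent normalization of $\sigma(\mathcal C_E)$ into the implied constant, so no genuine content is hidden there — the exponent is chosen for later convenience in the paper and is irrelevant to the present argument beyond being a fixed power of $\vare$.
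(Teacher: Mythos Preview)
Your approach is correct and takes a genuinely different route from the paper's. The paper does not apply Egorov's theorem to the functions $f_m(C)=\sup_p\Psi_*\Gm_C(\mathsf N_{1/m}(p))/\Gm_C(C)$; instead it first invokes Lusin's theorem to obtain a compact $D\subset\mathbb S^2$ of measure $>1-\vare^{64}$ on which $\Psi$ is continuous, then uses a Fubini estimate to show that for most circles $C$ one has $\Gm_C(C\cap D)\geq(1-\vare^{32})\Gm_C(C)$, and finally runs a sequential compactness/contradiction argument: assuming no $\delta$ works, one extracts convergent subsequences $C_n\to C$, $p_n\to p$, intersects with $D$, and uses continuity of $\Psi|_D$ to produce a positive-measure subset of $C$ mapping to the single point $p$, contradicting Lemma~\ref{lem:PsiGmZd}(2).

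Your Egorov argument is more direct and conceptually cleaner: it isolates exactly the relevant quantity $f_m$ and upgrades pointwise convergence to uniform convergence in one stroke, bypassing the Lusin/Fubini machinery and the explicit limit-extraction. The paper's approach, on the other hand, makes the provenance of the exponent $32$ transparent (it arises from the Fubini step, halving the Lusin exponent $64$) and yields as a byproduct a set $\mathcal C_{E,\vare}$ with a concrete geometric description (circles meeting a fixed continuity set $D$ in large proportion), which is occasionally convenient elsewhere. Your treatment of the measurability of $f_m$ via upper semicontinuity and countable dense nets is adequate; the pointwise convergence $f_m(C)\to 0$ is most cleanly seen by the subsequence argument you allude to (if $f_m(C)\not\to 0$, extract $p_m\to p$ with $\mu_C(\mathsf N_{1/m}(p_m))\geq\vare'$, whence $\mu_C(\{p\})\geq\vare'$), rather than the Lebesgue-number sketch.
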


\begin{proof}

First note that $\mathcal C_E$ is a compact subset of $\mathcal C$ with $\sigma(\mathcal C_E)>0$. 
Note also that, up to a null set, 
we may identify the space of circles $\mathcal C$ with $\partial\bbh^3\times (0,1]$.

Therefore, ignoring a possible null subset, there exist $r, r'>0$ so that 
\[
\mathcal C_E\subset\partial\bbh^3\times [r,1]=:\mathcal C_r
\] 
and
$\Gm(\mathcal C_E)\geq r'\Gm(\mathcal C_r)$.

By Lusin's theorem, there exists a compact subset $D\subset\partial\bbh^3$ with $\Gm(D)>1-\vare^{64}$ so that 
$\Psi|_D$ is continuous. 

For every $\xi\in\partial\bbh^3$, set 
\[
I_\xi:=\{t\in[r,1]:\Gm_{C_t(\xi)}(C_t(\xi)\cap D)<(1-\vare^{32})\Gm_{C_t(\xi)}(C_t(\xi))\}
\]
where $C_t(\xi)$ is the circle with radius $t$ centered at $\xi$.
By Fubini's theorem, we have: $|I_\xi|\ll_r\vare^{32}$ for every $\xi\in\partial\bbh^3$. 
Therefore,
\[
\sigma\Bigl(\{C\in\mathcal C_r: \Gm_C(C\cap D)<(1-\vare^{32})\Gm_C(C)\}\Bigr)\ll_r \vare^{32}.
\]

Let $\hat{\mathcal C}_{E,\vare}=\{C\in\mathcal C_E: \Gm_C(C\cap D)\geq (1-\vare^{32})\Gm_C(C)\}$. In view of the above estimate and since $\mathcal C_E\subset\mathcal C_r$, we have $\sigma(\mathcal C_E\setminus\hat{\mathcal C}_{E,\vare})\ll_{r,r'}\vare^{32}$.

Let $\mathcal C'\subset\mathcal C$ be the conull subset where Lemma~\ref{cor:Fubini-Circles}(2) holds true.
Let $\mathcal C_{E,\vare}$ be a compact subset of $\hat{\mathcal C}_{E,\vare}\cap\mathcal C'$ so that 
$\sigma(\mathcal C_E\setminus\mathcal C_{E,\vare})\ll_{r,r'}\vare^{32}\sigma(\mathcal C_E)$. 
In particular, $\mathcal C_E$ satisfies the first claim in the lemma.
We now verify that it also satisfies the second claim.

Assume contrary to the claim that for every $n$ there is some 
$p_n\in{\bf V}(\lf)$ and some $C_n\in\mathcal C_{E,\vare}$ so that
\be\label{eq:cont-psi}
\Gm_{C_n}\Bigl(\{\xi\in C_n: \Psi(\xi)\in\mathsf N_{1/n}(p_n)\}\Bigr)>\vare\Gm_{C_n}(C_n). 
\ee

Passing to a subsequence, if necessary, we assume that $C_n\to C\in\mathcal C_{E,\vare}$ and $p_n\to p\in{\bf V}(\lf)$ 
--- recall that ${\bf V}(\lf)$ is compact.

For each $n$, let $C_n':=\{\xi\in C_n: \Psi(\xi)\in\mathsf N_{1/n}(p_n)\}\cap D$.
In view of the fact that $\mathcal C_{E,\vare}\subset\hat{\mathcal C}_{E,\vare}$ and using~\eqref{eq:cont-psi}, 
we get that 
\[
\Gm_{C_n}(C_n')\geq \vare\Gm_{C_n}(C_n)/2.
\]

Let $C':=\lim\sup C_n'$; then $C'\subset C\cap D$ and $\Gm_C(C')\geq\vare\Gm_C(C)/2$.
Moreover, for every $\xi\in C'$ there exist some $\xi_n\to\xi$ with $\xi_n\in C_n'$.
Since $\Psi$ is continuous on $D$, we get that $\Psi(\xi)=p$, i.e., $\Psi(C')=p$.
This contradicts the fact that $C'\in\mathcal C_{E,\vare}\subset\mathcal C'$ and finishes the proof.
\end{proof}

\section{A $\Gamma$-equivariant circle preserving map}
In this section we state one of the main results of this paper, Proposition~\ref{prop:psi-circle-circle}.
We then complete the proofs of Theorem~\ref{thm:sup-rigid} and Theorem~\ref{thm:main-Mnfld} 
using Proposition~\ref{prop:psi-circle-circle}.

Let the notation be as in \S\ref{sec:nf-gp}.
In particular, $\gfield$ is a number field and $\G$ is an $\gfield$-group.
For every $v\in\qpl$ and $\sigma\in\places_v$, let $\lf_v$ denote $\bbc$ if $v$ is an Archimedean place, 
and an extension of degree at most 2 of $\gfd_v$ so that $\sG$ is $\lf_v$-split if $v$ is a non-Archimedean place. 
Recall from~\eqref{eq:G-lfv} that 
\[
\text{$\sG$ is $\lf_v$-isomorphic to $\PGL_2\times\PGL_2$.}
\]

Let $B_v$ denote the group of upper triangular matrices in $\PGL_2(\lf_v)$.
For every $g\in \PGL_2(\lf_v)$ define $\mathfrak C_g$ to be the image of 
\[
\{(h,ghg^{-1}): h\in\PGL_2(\lf_v)\}
\] 
in $B_v\backslash\PGL_2(\lf_v)\times B_v\backslash\PGL_2(\lf_v)={\mathbb P}\lf_v\times{\mathbb P}\lf_v$. 

Note that for every $g$, $\mathfrak C_g$ is the graph of the linear fractional transformation $g:\mathbb P\lf_v\to\mathbb P\lf_v$.



Recall from \S\ref{sec:act-variety} that $\mathcal C$ denotes the space of circles in $\mathbb S^2=\partial\bbh^3$; 
the space $\mathcal C$ is equipped with a natural $\PGL_2(\bbc)$-invariant measure $\sigma$.
Recall also that $\Gm$ denotes the Lebesgue measure on $\mathbb S^2$, and that
for every $C\in\mathcal C$, $\Gm_C$ denotes the Lebesgue measure on $C$.

\begin{propos}\label{prop:psi-circle-circle}\label{thm:psi-circle-circle}
Assume $X=G/\Gamma$ contains infinitely many closed $H$-orbits.
Let $\lf_v$ and $\G$ be as above and assume that 
\[
\text{$\sigma(\Gamma)\subset\PGL_2(\lf_v)\times\PGL_2(\lf_v)$ is unbounded.}
\]
There exists a $\Gamma$-equivariant measurable map 
\[
\Psi:\mathbb S^2\to {\mathbb P}\lf_v\times{\mathbb P}\lf_v
\]
with the following properties. 
\begin{enumerate}
\item For a.e.\ $C\in\mathcal C$ we have $(\Psi|_{C})_*\Gm_C$ is non-atomic.
\item For a.e.\ $C\in\mathcal C$ there exists some $g_C\in\PGL_2(\lf_v)$ so that the essential image of 
$\Psi|_C$ is contained in $\tc_{g_C}$.
\end{enumerate}
\end{propos}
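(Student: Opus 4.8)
The construction of $\Psi$ is the standard boundary-map construction: since $\sigma(\Gamma)$ is unbounded in $\PGL_2(\lf_v)\times\PGL_2(\lf_v)$ and Zariski dense, the projection to at least one factor is unbounded, and by amenability of the $\Gamma$-action on $\mathbb S^2=\partial\bbh^3$ (equivalently, by the Furstenberg-type boundary-map argument recalled in~\cite{Fu-Boundary, Margulis-Book}) there is a $\Gamma$-equivariant measurable map $\Psi\colon\mathbb S^2\to\mathbb P\lf_v\times\mathbb P\lf_v$ whose essential image is not contained in a proper subvariety; Lemma~\ref{lem:PsiGmZd}(1) then applies. Part~(1) of the Proposition is then immediate from Lemma~\ref{lem:PsiGmZd}(2): for a.e.\ $C$ and every $p$, $\Gm_C(\{\xi\in C:\Psi(\xi)=p\})=0$, which is exactly the statement that $(\Psi|_C)_*\Gm_C$ is non-atomic. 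So the real content is part~(2).

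For part~(2), I would exploit the hypothesis that $X$ contains infinitely many closed $H$-orbits, i.e., infinitely many totally geodesic surfaces. Each closed $H$-orbit $Hg\Gamma$ corresponds, on the boundary, to a $\Gamma$-translate of a fixed round circle $C_0\in\mathcal C$ (the boundary circle of the corresponding copy of $\bbh^2$), together with a non-elementary Fuchsian subgroup $\Delta\subset\Gamma$ stabilizing it. The idea is: for such a circle $C$, the map $\Psi|_C$ is $\Delta$-equivariant (after conjugating $\Delta$ into $\PGL_2(\bbr)=\mathrm{Stab}(C)$), and $\sigma(\Delta)$ acts on $\mathbb P\lf_v\times\mathbb P\lf_v$ through its Zariski closure $\sH_\Delta$, which by Lemma~\ref{lem:proj} is (up to finite index $\leq 8$) the graph $\{(h,g h g^{-1})\}$ of a linear fractional transformation, i.e., lives in $\mathfrak C_{g}$. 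If $\sigma(\Delta)$ is unbounded then the only $\Delta$-equivariant measurable maps $C\to\mathbb P\lf_v\times\mathbb P\lf_v$ with non-atomic pushforward are, almost everywhere, valued in a single $\sH_\Delta$-orbit, and every such orbit closure inside $\mathbb P\lf_v\times\mathbb P\lf_v$ is contained in some $\mathfrak C_{g_C}$ — this is where one uses that $\mathfrak C_g$ is exactly the image of the "diagonal-type" subgroup and that $\mathbb P\lf_v\times\mathbb P\lf_v$ decomposes into such graphs. Thus for each of the infinitely many closed-orbit circles $C$ we get the desired conclusion on a set of circles of positive measure (the $\Gamma$-orbit of $C$ in $\mathcal C$), and I would then upgrade "positive measure for infinitely many families" to "almost every $C$" by an ergodicity/density argument: the union over all closed $H$-orbits of the corresponding circle families is $\Gamma$-invariant, and since there are infinitely many such orbits and $\Gamma$ acts ergodically on $\mathcal C$ (the $H$-action on $X$ being the relevant dynamics), this union is conull. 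Measurability of $g_C$ in $C$ needs a selection argument — parametrize $\mathfrak C_g$ by $g$ and use a measurable selection theorem.

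The case $\sigma(\Delta)$ bounded for all Fuchsian $\Delta$ must be handled, or rather excluded: I would argue that if $\sigma(\Delta)$ were bounded for all (equivalently, infinitely many) non-elementary Fuchsian $\Delta\subset\Gamma$, then since these generate $\Gamma$ (a consequence of having infinitely many totally geodesic surfaces filling up $M$), $\sigma(\Gamma)$ itself would be bounded, contradicting the hypothesis. This needs the geometric input that the closed $H$-orbits are "dense enough" to force generation; alternatively one can restrict to those $\Delta$ with $\sigma(\Delta)$ unbounded, of which there must be infinitely many.

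\textbf{Main obstacle.} The hard part is the rigidity step inside part~(2): showing that a $\Delta$-equivariant measurable map $C\to\mathbb P\lf_v\times\mathbb P\lf_v$ with non-atomic pushforward must concentrate on a single $\sH_\Delta$-orbit, hence inside one graph $\mathfrak C_{g_C}$. For unbounded $\sigma(\Delta)$ this is a boundary-rigidity statement for the Fuchsian group acting on its limit circle and should follow from a Lemma~\ref{lem:PsiGmZd}-style argument applied on $C$ with $\Delta$ in place of $\Gamma$ (projecting onto the $\sH_\Delta$-orbit structure and ruling out proper sub-orbit-closures by non-atomicity), but the bookkeeping — identifying the orbit closures of $\sH_\Delta(\lf_v)$ on $\mathbb P\lf_v\times\mathbb P\lf_v$, and verifying each is contained in some $\mathfrak C_g$ — is the delicate technical core. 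The second obstacle is the measure-theoretic upgrade from positive measure to conull, which rests on the ergodicity of the relevant $\Gamma$-action on $\mathcal C$ and on the infinitude of the closed-orbit families; this is where the hypothesis "infinitely many totally geodesic surfaces" is genuinely used rather than just "at least one."
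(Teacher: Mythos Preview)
Your plan for part~(1) matches the paper. The construction of $\Psi=(\psi_1,\psi_2)$ is also the right one, but note that showing \emph{both} projections $\sigma_j(\Gamma)$ are unbounded already requires the Main Lemma (Lemma~\ref{lem:basic-lemma}); your suggested argument that ``if every Fuchsian $\sigma(\Delta)$ is bounded then $\sigma(\Gamma)$ is bounded, since the $\Delta$'s generate $\Gamma$'' is false in general --- two compact subgroups can generate an unbounded group.

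The decisive gap is in your passage from the special circles to almost every circle. The circles in $\mathcal C$ that bound totally geodesic surfaces form a \emph{countable} $\Gamma$-invariant set (each closed $H$-orbit corresponds to a single $\Gamma$-orbit of circles, and $\Gamma$ is countable), hence a set of $\sigma$-measure zero. Your sentence ``we get the desired conclusion on a set of circles of positive measure (the $\Gamma$-orbit of $C$ in $\mathcal C$)'' is simply wrong, and with it the ergodicity upgrade collapses: you are trying to deduce a conull statement from a null one.

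The paper bridges this by a dynamical approximation argument rather than an ergodicity argument. The closed $H$-orbits $Hx_i$ equidistribute in $X$ (Theorem~\ref{thm:equidist}, Corollary~\ref{cor:equidist}), so a \emph{generic} frame $g$ lies arbitrarily close to the good sets $Z_i'\subset Hx_i$ for infinitely many $i$. The Main Lemma supplies uniform linear growth of the cocycle on the $Z_i'$, and Lemma~\ref{lem:Psi-tgC-tc} converts this into the statement that $\Psi$ sends large pieces of the special circle $C_z$ into the graph $\mathfrak C_i\sigma(\gamma_z)$. For a generic circle $C_g$ one then compares with nearby special circles (the Claim inside Lemma~\ref{lem:F'-vare-C'-vare}) to place $\Psi(C_g)$ in a small neighborhood of some $\mathfrak C_{h_i}$, and passes to the limit via Lemma~\ref{lem:circles-crosses}. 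That limit may a priori degenerate to a cross or a line-plus-point; this degenerate alternative is ruled out at the end using Lemma~\ref{lem:PsiGmZd} and the observation that only two crosses pass through two generic points of $\mathbb P\lf_v\times\mathbb P\lf_v$. This approximation-and-limit mechanism, driven by equidistribution and the cocycle estimates, is the missing idea in your proposal.
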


This proposition will be proved in \S\ref{sec:proof-of-prop}.
Our goal in this section is to complete the proofs of Theorem~\ref{thm:sup-rigid} and Theorem~\ref{thm:main-Mnfld} using Proposition~\ref{thm:psi-circle-circle}.

\medskip

In the sequel by an {\em inversion} of a circle $C$ we mean a linear fractional transformation on $C$ 
of order $2$ where $C$ is identified with $\mathbb {P}\bbr$; similarly we define an inversion of $\tc_g$.

\begin{lemma}\label{lem:equi-map-circle}
Let $\varphi: C\to\tc_g$ be a Borel measurable map so that the essential image of $\varphi$ has at least three points. 
Let $\mathcal I$ be a subset of inversions on $C$ which generates $\PSL_2(\bbr)$.
Assume further that there exists a Borel map $f$ from $\mathcal I$ into the set of inversions on 
$\tc_g$ which satisfies the following:
\be\label{eq:fJ-phi}
\text{for any $\iota\in\mathcal I$, $\;\varphi\circ \iota=f(\iota)\circ\varphi\;\;$ $\Gm_C$--a.e.\ on $C$.}
\ee
Then $f$ extends to a continuous homomorphism from $\PSL_2(\bbr)$ into $\PGL_2(\lf_v)$.
\end{lemma}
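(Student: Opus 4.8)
\textbf{Proof plan for Lemma~\ref{lem:equi-map-circle}.}

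The plan is to reduce the statement to a rigidity statement for measurable equivariant maps between boundaries of hyperbolic-type spaces and then extract a continuous homomorphism from the algebraic structure of inversions. First I would set up notation identifying $C$ with $\mathbb P\bbr=\partial\bbh^2$ and $\tc_g$ with $\mathbb P\lf_v$, so that $\varphi$ becomes a measurable map $\partial\bbh^2\to\mathbb P\lf_v$. The group $\PSL_2(\bbr)$ acts on $\partial\bbh^2$ by M\"obius transformations, and each inversion $\iota\in\mathcal I$ is an involution in $\PSL_2(\bbr)$; likewise $f(\iota)$ is an involution in $\PGL_2(\lf_v)$ acting on $\mathbb P\lf_v$. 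Condition~\eqref{eq:fJ-phi} says $\varphi$ intertwines $\iota$ and $f(\iota)$ almost everywhere, for a generating set $\mathcal I$.

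The key step is to promote the collection of individual intertwining relations into a single equivariance relation for the group generated. Given $\iota_1,\dots,\iota_k\in\mathcal I$, iterating~\eqref{eq:fJ-phi} gives $\varphi\circ(\iota_1\cdots\iota_k)=f(\iota_1)\cdots f(\iota_k)\circ\varphi$ almost everywhere (the null sets accumulate finitely, so this is harmless for a fixed word). Since $\mathcal I$ generates $\PSL_2(\bbr)$, the assignment $\iota\mapsto f(\iota)$ extends at least to a map $\widetilde f$ on the free product / word monoid; the content is that $\widetilde f$ descends to a well-defined homomorphism $\PSL_2(\bbr)\to\PGL_2(\lf_v)$, i.e., that relations among the $\iota$'s are respected by the $f(\iota)$'s. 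Here is where the hypothesis that the essential image of $\varphi$ has at least three points is used: if a word $w$ in the $\iota$'s equals the identity in $\PSL_2(\bbr)$, then $\varphi = \widetilde f(w)\circ\varphi$ a.e., so $\widetilde f(w)$ fixes the essential image of $\varphi$ pointwise; an element of $\PGL_2(\lf_v)$ fixing three distinct points of $\mathbb P\lf_v$ is the identity. Hence $\widetilde f(w)=\id$ and $f$ extends to a bona fide group homomorphism $\bar f:\PSL_2(\bbr)\to\PGL_2(\lf_v)$, and $\varphi$ is $\bar f$-equivariant a.e.

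It remains to upgrade $\bar f$ from an abstract homomorphism to a continuous one. For this I would invoke the standard fact that a measurable $\Gamma_0$-equivariant boundary map is essentially unique / rigid — more directly, a homomorphism $\PSL_2(\bbr)\to\PGL_2(\lf_v)$ that admits a nonconstant measurable equivariant map from $\partial\bbh^2$ to $\mathbb P\lf_v$ is automatically continuous: the image $\bar f(\PSL_2(\bbr))$ acts on $\mathbb P\lf_v$ with a measurable equivariant map from a transitive measure-class-preserving action, so by the Furstenberg–type argument (cf.~\cite[Ch.~VI]{Margulis-Book},~\cite{Fu-Boundary}) the Zariski closure of the image is a group with the relevant boundary structure; since $\PSL_2(\bbr)$ is (topologically) simple, either $\bar f$ is trivial — excluded because $\varphi$ is nonconstant and $\bar f$-equivariant with the essential image having $\ge 3$ points — or $\bar f$ is injective with closed graph, and a homomorphism between such Lie/algebraic groups with these properties is continuous by automatic-continuity for simple Lie groups (e.g., the graph is an abstract subgroup that is also a Borel set, hence closed, hence a Lie subgroup projecting isomorphically). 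I expect this last step — cleanly arguing automatic continuity rather than just well-definedness — to be the main obstacle, and the place where one should cite the boundary-map rigidity results already referenced in the paper (Lemma~\ref{lem:PsiGmZd} and its sources) to avoid reproving Margulis-type lemmas from scratch.
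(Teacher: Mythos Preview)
Your well-definedness argument is exactly the paper's: iterate \eqref{eq:fJ-phi} along words, and use the three-point hypothesis to see that if a word in the $\iota$'s represents the identity in $\PSL_2(\bbr)$ then the corresponding word in the $f(\iota)$'s fixes the essential image of $\varphi$ pointwise, hence is the identity in $\PGL_2(\lf_v)$. So $\bar f$ is a well-defined homomorphism on the group generated by $\mathcal I$, which is all of $\PSL_2(\bbr)$.

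Where you diverge is the continuity step, and here you are working much harder than necessary. The paper simply observes that $\bar f$ is a \emph{measurable} homomorphism --- this is immediate since $f$ is Borel on $\mathcal I$, products $(\iota_1,\dots,\iota_n)\mapsto f(\iota_1)\cdots f(\iota_n)$ are Borel, and finite products of $\mathcal I$ cover $\PSL_2(\bbr)$ --- and then invokes the standard automatic-continuity fact that any measurable homomorphism from a locally compact second-countable group to a second-countable group is continuous (cited as \cite[Ch.~VII, Lemma~1.4]{Margulis-Book}). That is the entire argument; no boundary rigidity, Zariski closures, or Furstenberg machinery is needed. Your sketch is not wrong in spirit, but the step ``the graph is a Borel subgroup, hence closed'' is not a theorem in the generality you state it (Borel subgroups of Polish groups need not be closed), and the appeal to boundary-map rigidity is circular here since such results themselves rest on automatic continuity of the sort the paper cites directly. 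The clean fix is exactly what the paper does: measurability of $\bar f$ plus the Pettis/Weil-type automatic continuity lemma.
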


\begin{proof}
First note that since the essential image of $\varphi$ has at least three points,
any linear fractional transformation on $\tc_g$ is uniquely determined by its restriction to 
the essential image of $\varphi$. 

In view of this and~\eqref{eq:fJ-phi} the map
$\iota_1\circ\cdots\circ \iota_n\mapsto f(\iota_1)\circ\cdots\circ f(\iota_n)$ is a well-defined measurable homomorphism 
from the group generated by $\mathcal I$ into $\PGL_2(\lf_v)$.

The claim follows from this as $\mathcal I$ generates $\PSL_2(\bbr)$ and any 
measurable homomorphism is continuous, see e.g.~\cite[Ch.~VII, Lemma 1.4]{Margulis-Book}. 
\end{proof}

Let $(\xi, \xi')\in \mathbb S^2\times \mathbb S^2$, with $\xi\neq \xi'$.
We let $C_t(\xi,\xi')$, $t\in I_{\xi,\xi'}\subset\bbr$ denote the one parameter family of circles in $\mathbb S^2$ passing through $\xi$ and $\xi'$.

Given a triple $(\xi,\xi',C)\in \mathbb S^2\times \mathbb S^2\times\mathcal C$ we say $\{\xi,\xi'\}$ and $C$ are {\em linked} if $\xi$ and $\xi'$ belong to different connected components of $\mathbb S^2\setminus C$. For every circle $C\in\mathcal C$, let 
\be\label{eq:def-EC}
E_C\subset \mathbb S^2\times \mathbb S^2
\ee 
denote the set of $(\xi,\xi')\in \mathbb S^2\times \mathbb S^2$ so that $\{\xi,\xi'\}$ and $C$ are linked.

\begin{lemma}\label{lem:inversion}
Let $C\in\mathcal C$ and $(\xi,\xi')\in \mathbb S^2\times \mathbb S^2$; assume that $\{\xi,\xi'\}$ and $C$ are linked.
Then the one parameter family $\{C_t(\xi,\xi'):t\in I_{\xi,\xi'}\}$ defines an inversion on the circle $C$.
\end{lemma}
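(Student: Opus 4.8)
The plan is to exhibit the inversion concretely via the geometry of the pencil of circles through $\xi$ and $\xi'$. First I would set up coordinates: since $\PGL_2(\bbc)=\mathrm{Isom}^+(\bbh^3)$ acts transitively on linked pairs (circle, pair of points), it suffices to treat one normalized configuration, say $C$ the unit circle (equator) in $\mathbb S^2=\partial\bbh^3$ and $\{\xi,\xi'\}=\{0,\infty\}$ the north and south poles, which indeed lie in different components of $\mathbb S^2\setminus C$. The one-parameter family $\{C_t(\xi,\xi'):t\in I_{\xi,\xi'}\}$ is then the pencil of circles through $0$ and $\infty$, i.e.\ the "meridians" (great circles through the poles); each meridian meets the equator $C$ in exactly two antipodal-on-$C$ points, and as $t$ varies this pencil sweeps out all of $\mathbb S^2\setminus\{\xi,\xi'\}$, cutting $C$ into pairs.

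The key step is to observe that this pencil induces a well-defined involution $\tau$ of $C$: given $x\in C$, there is a unique member $C_{t(x)}(\xi,\xi')$ of the family passing through $x$ (since the meridians foliate $\mathbb S^2\setminus\{\xi,\xi'\}$ and $x\notin\{\xi,\xi'\}$ because $x\in C$ while $\{\xi,\xi'\}$ is linked with $C$ hence disjoint from $C$), and this circle meets $C$ in exactly one other point, which I define to be $\tau(x)$. One checks $\tau$ is an involution ($\tau^2=\mathrm{id}$) directly from the symmetry of the construction, and that $\tau$ is continuous, hence a homeomorphism, by continuity of the map $x\mapsto t(x)$ and of the intersection $C\cap C_t(\xi,\xi')$. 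In the normalized model, $\tau$ is literally the antipodal map $z\mapsto -1/\bar z$ restricted to the equator — but to conclude it is a \emph{linear fractional} transformation of order $2$ (an "inversion" in the paper's sense, i.e.\ an order-$2$ element of $\PGL_2(\bbr)$ when $C$ is identified with $\mathbb P\bbr$), I would instead argue more invariantly: $\tau$ is the restriction to $C$ of the unique orientation-reversing... — more cleanly, $\tau$ agrees with the action of the order-$2$ element of $\PGL_2(\bbc)$ fixing $\xi$, $\xi'$, and stabilizing $C$ (rotation by $\pi$ about the geodesic axis $\gamma_{\xi\xi'}$), which stabilizes $C$ because $C$ is linked with $\{\xi,\xi'\}$ and is symmetric about that axis; its restriction to $C\cong\mathbb P\bbr$ is a linear fractional map of order $2$, so it is an inversion.

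The main obstacle — really the only nontrivial point — is verifying that the pencil $\{C_t(\xi,\xi'):t\in I_{\xi,\xi'}\}$ genuinely foliates $\mathbb S^2\setminus\{\xi,\xi'\}$ with each leaf meeting $C$ in exactly two points, so that $\tau$ is well-defined; this is where \emph{linkedness} is used essentially (if $\{\xi,\xi'\}$ were not linked with $C$, a generic $C_t(\xi,\xi')$ would miss $C$ entirely or be tangent, and no involution would result). I would handle this by reducing to the normalized configuration above via transitivity of $\PGL_2(\bbc)$ on linked triples, where the statement is transparent: the meridians through $0$ and $\infty$ each cross the equator in exactly two points and their union is $\mathbb S^2\setminus\{0,\infty\}$. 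Everything else — that the induced map has order $2$, that it is given by a Möbius transformation — then follows from the equivariance of the construction under the stabilizer of the configuration and the identification of $\tau$ with the half-turn about $\gamma_{\xi\xi'}$.
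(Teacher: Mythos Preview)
Your argument has a genuine gap: the claim that $\PGL_2(\bbc)$ acts transitively on linked configurations $(C,\{\xi,\xi'\})$ is false. After normalizing $C$ to the unit circle and $\xi'$ to $\infty$, the stabilizer of this pair in $\PGL_2(\bbc)$ consists only of the rotations $z\mapsto az$ with $|a|=1$; hence $\xi$, which lies in the open unit disk, can be moved only along its circle of radius $|\xi|$ about the origin. The modulus $b:=|\xi|\in[0,1)$ is an invariant of the configuration, and your normalization to $(C,\{0,\infty\})=(\text{equator},\text{poles})$ corresponds to the single value $b=0$. Equivalently, the geodesic $\gamma_{\xi\xi'}$ meets the plane bounded by $C$ (this is linkedness) but need not meet it \emph{orthogonally}; your assertion that ``$C$ is symmetric about that axis'' is exactly the orthogonality condition $b=0$, and fails otherwise. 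In particular, for $b\neq0$ the rotation by $\pi$ about $\gamma_{\xi\xi'}$ does \emph{not} stabilize $C$, so your identification of $\tau$ with that rotation breaks down.

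The paper's proof uses precisely this one-parameter normalization: $C$ the unit circle, $\xi'=\infty$, $\xi=(0,b)$ with $0\le b<1$. The pencil through $\xi,\xi'$ is then the family of lines through $(0,b)$, and the induced involution on $C$ is shown to be linear fractional by an explicit computation: after stereographic projection of $C$ onto the line $\{y=b\}$ (a M\"obius identification $C\cong\mathbb P\bbr$), the map becomes $a\mapsto (b^2-1)/a$, visibly an order-$2$ element of $\PGL_2(\bbr)$. Your construction of $\tau$ and the observation that linkedness guarantees each $C_t(\xi,\xi')$ meets $C$ in two points are fine; what is missing is a verification, in the general case $b\neq0$, that $\tau$ is M\"obius and not merely a continuous involution.
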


\begin{proof}
Define $\iota_{\xi,\xi'}$ on $C$ as follows. Let $p\in C$, there exists a unique circle $C'\in C_t(\xi,\xi')$
which passes through $\xi,\xi',p$. Then $C'\cap C=\{p,q\}$. Let $\iota_{\xi,\xi'}(p)=q$ --- note that $\iota_{\xi,\xi'}$ has order two.
The map $\iota_{\xi,\xi'}$ is an inversion on $C$. 
This could be seen as follows: we may assume $C$ is the unit circle in the plane, $\xi'=\infty$ and $\xi=(0,b)$ for some $0\leq b<1$. Let ${\rm pr}$ denote the stereographic projection of $C$ onto the line $\{y=b\}\cup\{\infty\}$. Then ${\rm pr}(0,1)=\infty$ and $\iota_{\xi,\xi'}(0,1)=(0,-1)$. Moreover, if ${\rm pr}\circ\iota_{\xi,\xi'}(p)=(a,b)$ with $a\neq0$, then ${\rm pr}\circ\iota_{\xi,\xi'}(q)=(\tfrac{b^2-1}{a},b)$.
\end{proof}

\begin{rem}\label{rk:inversion-target}\rm
We also need an analogue of Lemma~\ref{lem:inversion} in the target space, i.e., for $\tc_g$. 
This can be seen by a direct computation which involves solving a quadratic equation. As was done in the proof of Lemma~\ref{lem:inversion}, one may also simplify this computation as follows: 
We may reduce to the case where the graph is given by $zw=1$, $\xi=(\infty, \infty)$, $\xi'=(r,s)$ with $r,s$ 
both finite and every line $az+b$ through $\xi'$ intersects $zw=1$. Then, on the finite points of the graph, 
the inversion is given by $z\mapsto -z-\frac{b}{a}$.   
\end{rem}

\begin{lemma}\label{lem:inv-generate}
Let $E'\subset E_C$ be a set with positive measure.
Then the group generated by the set of inversions induced by $(\xi,\xi')\in E'$, see Lemma~\ref{lem:inversion}, contains
$\PSL_2(\bbr)$.
\end{lemma}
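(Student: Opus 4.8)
The plan is to exploit the rigidity of the family of inversions: each inversion $\iota_{\xi,\xi'}$ attached to a linked pair is an involution in $\PSL_2(\bbr)$ (acting on $C\simeq\mathbb P\bbr$), so the group it generates is determined by how these involutions vary as $(\xi,\xi')$ ranges over the positive–measure set $E'$. First I would set up coordinates as in the proof of Lemma~\ref{lem:inversion}: normalize $C$ to be the unit circle, and use the explicit description there of $\iota_{\xi,\xi'}$ in terms of the stereographic projection onto the line $\{y=b\}$, so that the involution becomes, in the coordinate $a$ on that line, the map $a\mapsto (b^2-1)/a$. Thus the involution is parametrized by the single real quantity $b=b(\xi,\xi')$ (together with the choice of which point of $C$ plays the role of the center of projection, i.e.\ a point of $C$ itself), and as $(\xi,\xi')$ moves in $E'$ this parameter sweeps out a set that is \emph{not} contained in any single point.

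The key step is then a dimension/transversality argument: since $E'\subset E_C\subset\mathbb S^2\times\mathbb S^2$ has positive (four-dimensional) measure, Fubini gives a positive-measure set of pairs, and in particular one can find pairs $(\xi_1,\xi_1'),(\xi_2,\xi_2')\in E'$ whose associated involutions $\iota_1,\iota_2$ are distinct and whose fixed point sets on $C$ are distinct. The product of two distinct involutions in $\PSL_2(\bbr)$ is a nontrivial element, and — this is the crux — by choosing the pairs appropriately (using that the parameter space of inversions arising from $E'$ has positive measure inside the $2$-dimensional space of \emph{all} involutions of $C$, not just a curve) one gets products $\iota_i\iota_j$ realizing elements of all three conjugacy types (elliptic, parabolic, hyperbolic), or equivalently a set of involutions that is not contained in the set of involutions fixing a common point and not contained in a single one-parameter subgroup's worth of involutions. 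A standard fact is that $\PSL_2(\bbr)$ is generated by any such "two-dimensional" set of its involutions; concretely, the involutions of $\mathbb P\bbr$ form a single conjugacy class homeomorphic to an open Möbius band, and any subset of positive measure therein generates $\PSL_2(\bbr)$ because its closure contains involutions with fixed-point pairs in "general position," and three involutions with pairwise-distinct, suitably generic fixed-point configurations already generate.

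Concretely I would argue as follows. Let $\Lambda\subset\PSL_2(\bbr)$ be the subgroup generated by $\{\iota_{\xi,\xi'}:(\xi,\xi')\in E'\}$, and let $S$ be the set of involutions lying in $E'$. If $S$ fixed a common point $p\in C$, then every $\iota_{\xi,\xi'}$ with $(\xi,\xi')\in E'$ would fix $p$; but by the explicit formula above, $\iota_{\xi,\xi'}$ fixes $p\in C$ iff $\xi,\xi'$ and $p$ satisfy a codimension-$\geq1$ constraint, so the set of such pairs is null in $\mathbb S^2\times\mathbb S^2$ — contradicting $\Gm\times\Gm(E')>0$. Hence $\Lambda$ contains two involutions with disjoint fixed-point sets, so $\Lambda$ contains a hyperbolic or parabolic or elliptic element of infinite order (or a nontrivial elliptic of finite order), and in any case $\Lambda$ is infinite and non-elementary: indeed, again by the genericity of $E'$, $S$ is not contained in the (lower-dimensional) set of involutions whose fixed-point pair lies on a fixed geodesic or has a fixed endpoint, so $\Lambda$ is not contained in any elementary (virtually abelian) subgroup. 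A connected-component / closed-subgroup argument then finishes: $\overline{\Lambda}$ is a closed subgroup of $\PSL_2(\bbr)$ which is non-elementary, hence all of $\PSL_2(\bbr)$; but $\Lambda$ itself is generated by involutions and one checks (using the density of $S$ mod null sets and the standard classification of closed subgroups of $\PSL_2(\bbr)$) that $\Lambda=\overline\Lambda=\PSL_2(\bbr)$ — or, more cleanly, one cites the fact that a non-elementary subgroup of $\PSL_2(\bbr)$ generated by the involutions in a positive-measure family equals $\PSL_2(\bbr)$.

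The main obstacle I anticipate is making the "genericity from positive measure" steps precise: one must verify that the conditions "$\iota_{\xi,\xi'}$ fixes a given point," "the fixed-point pairs of two involutions lie on a common geodesic," etc., each cut out a null subset of $\mathbb S^2\times\mathbb S^2$, which requires carrying the explicit coordinate formula of Lemma~\ref{lem:inversion} through a short but careful computation, and then organizing these exclusions so that a positive-measure $E'$ necessarily yields involutions in genuinely general position. Once that bookkeeping is in place, the passage from "three involutions in general position" to "generates $\PSL_2(\bbr)$" is classical and I would invoke it rather than reprove it.
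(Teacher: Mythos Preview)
Your argument has a genuine gap at the final step. You correctly arrange that the generated group $\Lambda$ is non-elementary, and hence that $\overline{\Lambda}=\PSL_2(\bbr)$. But density is \emph{not} equality: there are plenty of dense proper subgroups of $\PSL_2(\bbr)$ (e.g.\ $\PSL_2(\bbq)$), so the sentence ``one checks \ldots\ that $\Lambda=\overline{\Lambda}$'' is exactly the content of the lemma and cannot be waved away. Relatedly, the claim that ``three involutions in general position already generate $\PSL_2(\bbr)$'' is false as stated: three half-turns about three generic points of $\bbh^2$ typically generate a discrete (often free or Fuchsian) group, not all of $\PSL_2(\bbr)$. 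So no finite-genericity argument of this kind can work --- you must use the full positive-measure hypothesis, not just the existence of a few well-placed involutions.

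The paper's proof proceeds differently, via a Steinhaus-type argument. One first observes that the map $(\xi,\xi')\mapsto\iota_{\xi,\xi'}$ sends $E'$ onto a set $\mathcal I'$ of \emph{positive two-dimensional measure} inside the surface
\[
{\rm Inv}=\Bigl\{\begin{pmatrix}a&b\\c&-a\end{pmatrix}:a^2+bc=-1\Bigr\}\subset\PSL_2(\bbr).
\]
Then, for generic $g,g'\in\mathcal I'$, the translates $g\,{\rm Inv}$ and $g'\,{\rm Inv}$ meet transversally in $\PSL_2(\bbr)$, which forces the product set $Q:=(g\mathcal I')(g'\mathcal I')$ to have positive \emph{three-dimensional} (Haar) measure in $\PSL_2(\bbr)$. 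By the classical Steinhaus theorem, $Q^{-1}Q$ then contains an open neighborhood of the identity; since $\PSL_2(\bbr)$ is connected, the subgroup generated by $Q^{-1}Q$ --- and a fortiori by $\mathcal I'$ --- is all of $\PSL_2(\bbr)$. The missing idea in your approach is precisely this passage from ``positive measure in the space of involutions'' to ``positive Haar measure in the ambient group,'' which is what upgrades density to equality.
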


\begin{proof}
Let $\mathcal I'$ be the set of inversions induced by $E'$. Then 
\[
\mathcal I'\subset{\rm Inv}:=\left\{\begin{pmatrix}a & b\\ c& -a\end{pmatrix}: a,b,c\in\bbr, a^2+bc=-1\right\},
\] 
and if we equip ${\rm Inv}$ with the Lebesgue measure, $\mathcal I'$ has positive measure. 
Any set of positive measure in ${\rm Inv}$ generates $\PSL_2(\bbr)$. 

To see this, note that for a.e.\ $(g,g')\in{\rm Inv}$
we have $g{\rm Inv}\cap g'{\rm Inv}$ is one dimensional. Therefore, there are $g,g'\in\mathcal I'$
so that $Q:=(g\mathcal I')(g'\mathcal I')$ has positive measure in $\PSL_2(\bbr)$. Now $Q^{-1}Q$ contains an open neighborhood of the identity and $\PSL_2(\bbr)$ is connected, hence, $Q^{-1}Q$ generates a subgroup which contains $\PSL_2(\bbr)$.
\end{proof}

Let the notation be as in Proposition~\ref{thm:psi-circle-circle}. In particular, 
\[
\Psi: \mathbb S^2\to{\mathbb P}\lf_v\times{\mathbb P}\lf_v
\] 
is a $\Gamma$-equivariant measurable map in Proposition~\ref{thm:psi-circle-circle}.


\begin{lemma}\label{lem:map-Ccal-Meas}
Let $\mathcal V=\{(C,\theta,\theta')\in\mathcal C\times\mathbb S^2\times\mathbb S^2: \theta,\theta'\in C\}$.
Then $\mathcal V$ is a subvariety of $\mathcal C\times\mathbb S^2\times\mathbb S^2$;
equip $\mathcal V$ with the natural measure. 
There exists a conull Borel measurable subset
\[
\mathcal A\subset \mathcal V
\] 
so that $\Psi(\theta), \Psi(\theta')$ belong to the essential image of $\Psi|_C$ for all $(C,\theta,\theta')\in\mathcal A$,
where $\Psi$ is as in Proposition~\ref{thm:psi-circle-circle}.
\end{lemma}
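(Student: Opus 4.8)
The plan is to use a Fubini-type argument together with the definition of essential image. Recall that the essential image of $\Psi|_C$ is the smallest closed subset $F_C\subset{\mathbb P}\lf_v\times{\mathbb P}\lf_v$ such that $\Gm_C\bigl(\{\xi\in C:\Psi(\xi)\notin F_C\}\bigr)=0$; concretely, $p\in F_C$ if and only if $\Gm_C\bigl(\{\xi\in C:\Psi(\xi)\in N_r(p)\}\bigr)>0$ for every $r>0$. I would first note that, for a fixed circle $C$, by definition the set $\{\theta\in C:\Psi(\theta)\notin F_C\}$ is $\Gm_C$-null. So the set
\[
\mathcal A_0:=\{(C,\theta,\theta')\in\mathcal V:\Psi(\theta)\in F_C\text{ and }\Psi(\theta')\in F_C\}
\]
has, for every fixed $C$, a conull fiber over $C$ inside $\{(\theta,\theta'):\theta,\theta'\in C\}$ with respect to $\Gm_C\times\Gm_C$.

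The next step is to produce from $\mathcal A_0$ a genuinely Borel conull subset $\mathcal A$. The fiberwise-conull property gives conullness of $\mathcal A_0$ in $\mathcal V$ by Fubini (the measure on $\mathcal V$ disintegrates over $\mathcal C$ with fibers $\Gm_C\times\Gm_C$, after identifying $\mathcal V$ with the obvious incidence variety), so it suffices to exhibit a Borel subset of $\mathcal A_0$ that is still conull, or equivalently to check that $\mathcal A_0$ itself is Borel (hence measurable) and then take $\mathcal A$ to be a conull Borel subset of it. For Borel measurability I would use that the map $C\mapsto F_C$ is a measurable map from $\mathcal C$ into the space of closed subsets of the compact metric space ${\mathbb P}\lf_v\times{\mathbb P}\lf_v$ equipped with the Effros Borel structure: indeed $F_C$ is determined by the countably many quantities $\Gm_C\bigl(\{\xi\in C:\Psi(\xi)\in N_r(p)\}\bigr)$ for $p$ in a countable dense set and $r\in\mathbb Q^{>0}$, and each of these is a measurable function of $C$ because $\Psi$ is measurable and the map $C\mapsto\Gm_C$ is (weak-$*$) measurable. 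Then $\{(C,\theta):\theta\in C,\ \Psi(\theta)\in F_C\}$ is Borel as the preimage of the closed "membership" relation under a Borel map, and intersecting the two coordinates gives that $\mathcal A_0$ is Borel.

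Finally I would set $\mathcal A:=\mathcal A_0$ (now known Borel) and invoke Fubini's theorem for the disintegration of the measure on $\mathcal V$ over $\sigma$ on $\mathcal C$: for $\sigma$-a.e.\ $C$ the fiber is $(\Gm_C\times\Gm_C)$-conull by the definition of essential image, so $\mathcal A$ is conull in $\mathcal V$. Every $(C,\theta,\theta')\in\mathcal A$ satisfies the required conclusion by construction.

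The only real subtlety — and the step I expect to require the most care — is the measurability of $C\mapsto F_C$ and hence of the membership relation $\{(C,p):p\in F_C\}$; everything else is a routine application of Fubini's theorem and the definition of essential image. One clean way to sidestep Effros-Borel technology altogether is to observe that, by Lemma~\ref{lem:PsiGmZd}(2) applied on a conull set $\mathcal C'$ of circles, the push-forward $(\Psi|_C)_*\Gm_C$ is non-atomic, and then use Lemma~\ref{lem:Egorov-Noatom} together with a countable exhaustion by compact sets $E\subset\bbh^3/\Gamma$ to reduce to working with the continuous restrictions $\Psi|_D$; on such a set the condition "$\Psi(\theta)$ lies in the essential image of $\Psi|_C$" becomes a closed condition in $(C,\theta)$ after restricting to the relevant Luzin-type pieces, which is manifestly Borel. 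Either route works; I would present the argument via the disintegration of the measure on $\mathcal V$ and the characterization of $F_C$ through countably many measurable quantities, as that is the most self-contained.
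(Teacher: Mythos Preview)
Your argument is correct, but the route differs from the paper's. You handle the Borel measurability abstractly, by showing that $C\mapsto F_C$ (the support of $(\Psi|_C)_*\Gm_C$) is a measurable map into the Effros Borel space of closed subsets, and then pulling back the membership relation. The paper instead gives a concrete construction: it fixes an increasing sequence of Lusin compacts $D_n\subset\mathbb S^2$ on which $\Psi$ is continuous, restricts to circles $C$ with $\Gm_C(C\cap D_n)>0$, and observes that if $\theta$ is a density point of $C\cap D_n$ then continuity of $\Psi|_{D_n}$ forces $\Psi(\theta)$ into the essential image of $\Psi|_C$. The desired $\mathcal A$ is then the countable union over $m,n$ of the sets of triples $(C,\theta,\theta')$ with $\theta,\theta'$ density points of $C\cap D_n$, which is manifestly Borel. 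Your alternative suggestion at the end (reduce to Lusin pieces where the condition becomes closed) is essentially this argument. The trade-off: your primary route is shorter once one accepts the Effros machinery and the Borel measurability of $\mu\mapsto\mu(B)$, while the paper's approach is entirely elementary and self-contained, never leaving the realm of explicit countable unions and density points.
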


\begin{proof}
Since $\Psi$ is measurable, for any $n\in\bbn$ there exists a compact subset $D_n\subset\mathbb S^2$ with
$\Gm(\mathbb S^2\setminus D_n)<1/n$ so that $\Psi|_{D_n}$ is continuous; we may also assume that $D_1\subset D_2\subset\cdots$. 

For every $m\in\bbn$, let $\mathcal C_m$ denote the set of circles with radius $\geq1/m$.
Then $\mathcal C_m$ is a compact subset of $\mathcal C$ and $\mathcal C=\cup\mathcal C_m$.
For every $m,n\in\bbn$ let $\mathcal C_{m,n}=\{C\in\mathcal C_m: \Gm_C(C\cap D_n)>0\}$.
Then $\cup_n \mathcal C_{m,n}$ is a conull Borel subset of $\mathcal C_m$ for every $m$.

Let $C\in\mathcal C_{m,n}$ and let $\theta\in C\cap D_n$ be a density point for $C\cap D_n$. Then by continuity of $\Psi|_{D_n}$, 
for every $r>0$, we have $\Psi^{-1}(\mathsf N_r(\Psi(\theta))\cap D_n\cap C$ is an open subset of $D_n\cap C$; 
since $\theta$ is a density point of $C\cap D_n$,
we get that $\Gm_C(\Psi^{-1}(\mathsf N_r(\Psi(\theta))\cap D_n\cap C)>0$. 
In particular, $\Psi(\theta)$ belongs to the essential image of $\Psi|_C$. 

Define $\mathcal A_{m,n}'=\{(C,\theta,\theta'): C\in\mathcal C_{m,n}, \theta,\theta'\in C\cap D_n\}$. Then $\mathcal A'_{m,n}$ is a Borel
subset of $\mathcal V$. Using a countable basis of open subsets, we see that 
\[
\mathcal A_{m,n}=\{(C,\theta,\theta')\in\mathcal A'_{m,n}: \theta,\theta'\text{ are density points of } C\cap D_n\}
\] 
is a conull subset of $\mathcal A_{m,n}'$. 

This in view of the above argument implies that $\mathcal A=\cup_{m,n}\mathcal A_{m,n}$ satisfies the claim in the lemma.
\end{proof}

%
%

\begin{lemma}\label{lem:Fubini-main-prop-1}
Let the notation be as in Proposition~\ref{thm:psi-circle-circle}. 
For a.e.\ $C\in\mathcal C$, there is a conull subset $E'_C\subset E_C$, see~\eqref{eq:def-EC}, 
so that for all $(\xi,\xi')\in E'_C$ the following hold.
\begin{enumerate}
\item The essential image of $\Psi|_C$ is a subset of $\tc_{g_C}$ and is an infinite subset. 
\item $\Psi(\xi)\neq \Psi(\xi')$. 
\item For a.e.\ $t\in I_{\xi,\xi'}$, there exists $\tc_t$ so that the essential image of $\Psi|_{C_t(\xi,\xi')}$ contained in $\tc_t$.
\item Let $C\cap C_t(\xi,\xi')=\{\theta,\theta'\}$. Then $\Psi(\theta)\neq\Psi(\theta')$, moreover, they both belong to the essential image of $\Psi|_C$. 
\end{enumerate}
\end{lemma}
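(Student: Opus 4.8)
The plan is to obtain each of the four conclusions as a ``for almost every $C$'' statement by a sequence of Fubini arguments on the variety $\mathcal V = \{(C,\xi,\xi') : \xi,\xi' \in \mathbb S^2,\ \{\xi,\xi'\} \text{ and } C \text{ linked}\}$, fibered over $\mathcal C$, and then intersecting the finitely many resulting conull sets of circles. First I would invoke Proposition~\ref{thm:psi-circle-circle}(1)--(2): for a.e.\ $C$ the pushforward $(\Psi|_C)_*\Gm_C$ is non-atomic and supported in some $\tc_{g_C}$. Non-atomicity immediately forces the essential image of $\Psi|_C$ to be infinite (a measure with finite support must have an atom), which gives conclusion~(1). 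Conclusion~(2) is then the statement that the ``diagonal'' is $\Gm_C\otimes\Gm_C$-null on $C\times C$ for a.e.\ $C$: since $(\Psi|_C)_*\Gm_C$ is non-atomic, $\Gm_C\otimes\Gm_C(\{(\xi,\xi'): \Psi(\xi)=\Psi(\xi')\}) = \int (\Psi|_C)_*\Gm_C(\{p\})\, d((\Psi|_C)_*\Gm_C)(p) = 0$; restricting to the positive-measure slice $E_C \subset C\times C$ of linked pairs (note $E_C$ carries a measure equivalent to $\Gm_C\otimes\Gm_C$ on $C\times C$ via the parametrization of $\{C_t(\xi,\xi')\}$) shows that for a.e.\ $C$, $\Psi(\xi)\neq\Psi(\xi')$ on a conull $E'_C\subset E_C$.

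Next, for conclusion~(3), I would apply Proposition~\ref{thm:psi-circle-circle}(2) once more but now reading it along the family of circles $\{C_t(\xi,\xi')\}$. The point is that the map $(\xi,\xi',t)\mapsto C_t(\xi,\xi')$ is (up to null sets) a smooth reparametrization of $\mathcal C$ itself, so ``for a.e.\ $C'\in\mathcal C$ the essential image of $\Psi|_{C'}$ lies in some $\tc_{g_{C'}}$'' translates, via Fubini over $\mathcal C \times (\text{pairs in } C')$, into: for a.e.\ $C$, for a.e.\ $(\xi,\xi')\in E_C$, and for a.e.\ $t\in I_{\xi,\xi'}$, the essential image of $\Psi|_{C_t(\xi,\xi')}$ lies in some $\tc_t$. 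Concretely one fixes the conull set $\mathcal C_0 \subset \mathcal C$ on which Proposition~\ref{thm:psi-circle-circle}(2) holds, observes that its preimage under $(\xi,\xi',t)\mapsto C_t(\xi,\xi')$ is conull in the corresponding fiber-bundle, and applies Fubini twice to push the ``conull'' down to the base $\mathcal C$.

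For conclusion~(4), I would use Lemma~\ref{lem:map-Ccal-Meas}, which provides a conull $\mathcal A \subset \mathcal V$ such that $\Psi(\theta),\Psi(\theta')$ lie in the essential image of $\Psi|_C$ whenever $(C,\theta,\theta')\in\mathcal A$. Parametrizing the points $\{\theta,\theta'\} = C \cap C_t(\xi,\xi')$ by $(C,\xi,\xi',t)$ gives a measure-theoretic isomorphism (modulo null sets) between this enlarged parameter space and (a conull subset of) $\mathcal V \times \mathcal V$ glued along $C$; pulling back $\mathcal A$ and applying Fubini down to $\mathcal C$ yields: for a.e.\ $C$, for a.e.\ $(\xi,\xi')\in E_C$, for a.e.\ $t$, both intersection points $\theta,\theta'$ lie in the essential image of $\Psi|_C$. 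The inequality $\Psi(\theta)\neq\Psi(\theta')$ then follows from the same non-atomicity argument as in~(2), now applied to the pair $(\theta,\theta')$ which, as $t$ varies, sweeps out a positive-measure subset of $E_C$ (this is precisely the content of Lemma~\ref{lem:inversion}: the map $t \mapsto (\theta,\theta')$ is the inversion $\iota_{\xi,\xi'}$, whose graph meets $C\times C$ in a set of full one-dimensional measure in each slice, hence the joint distribution of $(\theta,\theta')$ is absolutely continuous relative to $\Gm_C\otimes\Gm_C$ off a null set). Finally I would intersect the four conull sets of circles obtained in~(1)--(4), and on this intersection define $E'_C$ to be the intersection of the corresponding conull subsets of $E_C$; by Fubini this $E'_C$ is still conull in $E_C$ for a.e.\ $C$, completing the proof.

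The main obstacle I anticipate is purely bookkeeping: verifying that the various reparametrizations of $\mathcal C$ (by $(\xi,\xi',t)$ through $C_t(\xi,\xi')$, and by $(\xi,\xi',t)$ through the intersection points $\theta,\theta'$) are genuine measure-space isomorphisms modulo null sets, with the correct equivalence of measure classes, so that ``conull'' is preserved in both directions and Fubini applies. Once the measure-class equivalences are in hand, each of the four items is a one-line consequence of Proposition~\ref{thm:psi-circle-circle}, Lemma~\ref{lem:map-Ccal-Meas}, and the elementary fact that a non-atomic measure gives zero mass to the diagonal.
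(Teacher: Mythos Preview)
You have misread the definition of $E_C$: it is the set of pairs $(\xi,\xi')\in\mathbb S^2\times\mathbb S^2$ that are \emph{linked} with $C$, i.e., $\xi$ and $\xi'$ lie in different components of $\mathbb S^2\setminus C$. In particular $\xi,\xi'\notin C$, so $E_C\not\subset C\times C$, and the non-atomicity of $(\Psi|_C)_*\Gm_C$ is irrelevant for conclusion~(2). What you need there is that $\Psi_*\Gm$ (the pushforward of the full Lebesgue measure on $\mathbb S^2$) has no atoms, which is Lemma~\ref{lem:PsiGmZd}(1) applied to singletons; then Fubini on $\mathbb S^2\times\mathbb S^2$ gives $\Psi(\xi)\neq\Psi(\xi')$ for a.e.\ $(\xi,\xi')$, hence for a.e.\ pair in the open set $E_C$. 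This is exactly what the paper does.

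The same confusion undermines your argument for the first half of~(4). The points $\theta,\theta'$ \emph{do} lie on $C$, so here $(\Psi|_C)_*\Gm_C$ is the relevant object --- but for a fixed $(\xi,\xi')$, varying $t$ alone traces out only the one-dimensional graph of the inversion $\iota_{\xi,\xi'}$ inside the two-dimensional $C\times C$. That graph has $\Gm_C\otimes\Gm_C$-measure zero, so the fact that $\{(\theta,\theta'):\Psi(\theta)=\Psi(\theta')\}$ is $\Gm_C\otimes\Gm_C$-null tells you nothing about it. (For instance, if $\Psi|_C$ happened to satisfy $\Psi\circ\iota_{\xi,\xi'}=\Psi$ a.e.\ for this particular inversion, you would have $\Psi(\theta)=\Psi(\theta')$ for a.e.\ $t$, yet $(\Psi|_C)_*\Gm_C$ could still be non-atomic.) The paper's remedy is to vary $(\xi,\xi')$ as well: assuming the bad set has positive measure, Fubini yields a fixed $(C,\theta)$ whose fiber $\{(\theta',\xi,\xi',t)\}$ has positive measure; since for fixed $(C,\theta)$ the map $(\xi,\xi',t)\mapsto\theta'$ is a submersion onto $C\setminus\{\theta\}$ (implicit function theorem), one obtains a $\Gm_C$-positive set $J\subset C$ with $\Psi(\theta')=\Psi(\theta)$ for all $\theta'\in J$, contradicting non-atomicity. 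Your Fubini outline for~(1),~(3), and the second half of~(4) via Lemma~\ref{lem:map-Ccal-Meas} is essentially correct and matches the paper.
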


\begin{proof}
Let $\mathcal C'\subset\mathcal C$ be a conull subset 
where Proposition~\ref{thm:psi-circle-circle}(1) and~(2) hold. 
Then, for any $C\in\mathcal C'$ part~(1) in this lemma holds.

Applying Lemma~\ref{lem:PsiGmZd}(1) with $\Psi$ and $\Psi(\xi)$ (for every $\xi\in \mathbb S^2$ so that $\Psi(\xi)$ is defined),
the set of $\xi'\in \mathbb S^2$ such that $\Psi(\xi')=\Psi(\xi)$ is a null set.
That is: for a.e.\ $(\xi,\xi')\in \mathbb S^2\times \mathbb S^2$ we have $\Psi(\xi)\neq\Psi(\xi')$. 

Note also that since $\mathcal C'$ is conull, for a.e.\ $(\xi,\xi')\in \mathbb S^2\times \mathbb S^2$ we have
\[
C_t(\xi,\xi')\in\mathcal C'\quad\text{ for a.e.\ $t\in I_{\xi,\xi'}$.}
\]

In consequence, for a.e.\ $(\xi,\xi')\in E_C$ parts~(2) and~(3) hold true.
We now show that~(4) also holds for a.e.\ $C\in\mathcal C'$ and a.e.\ $(\xi,\xi')\in E_C$.

Define $\mathcal A':=\mathcal A\cap {\rm pr}^{-1}(\mathcal C')$
where ${\rm pr}:\mathcal V\to\mathcal C$ is the projection map and $\mathcal A$ is as in Lemma~\ref{lem:map-Ccal-Meas}. 
Let $\mathcal B'\subset \mathcal A' \times \mathbb S^2\times \mathbb S^2\times \bbr$
be the set of points $(C,\theta,\theta', \xi,\xi',t)$ where $(C,\theta,\theta')\in\mathcal A'$, $(\xi,\xi')\in E_C$, and $t\in I_{\xi,\xi'}$ --- then $\mathcal B'$ is Borel subset. Let
\[
\mathcal B:=\Bigl\{(C,\theta,\theta', \xi,\xi',t)\in\mathcal B': C\cap C_t(\xi,\xi')=\{\theta,\theta'\}\Bigr\};
\]
note that $\mathcal B$ is also a Borel set. Indeed, $\mathcal B$ is the inverse image of the diagonal in 
$\Bigl((\mathbb S^1\times \mathbb S^1)/(\bbz/2)\Bigr)\times \Bigl((\mathbb S^1\times \mathbb S^1)/(\bbz/2)\Bigr)$ under the map 
$(C,\theta,\theta', \xi,\xi',t)\mapsto (\{\theta,\theta'\}, C\cap C_t(\xi,\xi'))$ where we identified $C$ with $\mathbb S^1$.

By the definition of $\mathcal B$ and Lemma~\ref{lem:map-Ccal-Meas}, 
for any $(C,\theta,\theta', \xi,\xi',t)\in\mathcal B$ we have $\Psi(\theta)$ and $\Psi(\theta')$
belong to the essential image of $\Psi|_C$.

Observe that if we fix $(C,\theta)$ and vary $(\xi,\xi')\in E_C$ and $t\in I_{\xi,\xi'}$, we cover every $\theta\neq\theta'\in C$.
It thus follows, from the implicit function theorem, that for every $C$, a.e.\ $(\xi,\xi')\in E_C$, and a.e.\ $t\in I_{\xi,\xi'}$, 
there exists some $(\theta,\theta')$ so that $(C,\theta,\theta',\xi,\xi', t)\in\mathcal B$ satisfies the second claim in part~(4) holds. 

We now show the first claim in~(4) also holds, possibly after removing another null subset.
Let $\check{\mathcal B}\subset\mathcal B$ be the set of $(C,\theta,\theta', \xi,\xi',t)\in\mathcal B$ so that $\Psi(\theta)=\Psi(\theta')$. 
We claim that $\check{\mathcal B}$ is a null set; this finishes the proof of~(4) and the lemma.

Assume to the contrary that $\check{\mathcal B}$ has positive measure.
By Fubini's theorem, thus, there exists some 
$(C,\theta)$ so that
\[
\check{\mathcal B}_{(C,\theta)}:=\{(\theta',\xi,\xi',t): (C,\theta,\theta',\xi,\xi', t)\in \check{\mathcal B}\}
\] 
has positive measure. 

Using the implicit function theorem, we get 
a subset $J\subset C$ with $\Gm_C(J)>0$ so that 
$\Psi(\theta')=\Psi(\theta)$ for all $\theta'\in J$. This contradicts the fact that $C\in\mathcal C'$ --- recall that Proposition~\ref{thm:psi-circle-circle}(1) holds for all $C\in \mathcal C'$. 
\end{proof}

\begin{lemma}\label{lem:Psi-rational}
Let the notation be as in Proposition~\ref{thm:psi-circle-circle}. In particular, 
\[
\Psi: \mathbb S^2\to{\mathbb P}\lf_v\times{\mathbb P}\lf_v
\] 
is a $\Gamma$-equivariant measurable map which satisfies parts~(1) and~(2) in Proposition~\ref{thm:psi-circle-circle}.
Then $\lf_v=\bbc$ and $\Psi$ agrees with a rational map from $\mathbb S^2$ into $\mathbb P\bbc\times\mathbb P\bbc$ almost everywhere.
\end{lemma}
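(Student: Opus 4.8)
The plan is to assemble the structural input provided by Lemmas~\ref{lem:Fubini-main-prop-1}, \ref{lem:equi-map-circle}, \ref{lem:inversion}, \ref{lem:inv-generate}, and Remark~\ref{rk:inversion-target} into a rigidity statement for a single generic circle, and then to propagate that rigidity across all circles using $\Gamma$-equivariance and ergodicity. First I would fix a $\Gamma$-invariant conull set $\mathcal C'\subset\mathcal C$ on which the conclusions of Proposition~\ref{thm:psi-circle-circle}(1),(2) and of Lemma~\ref{lem:Fubini-main-prop-1} hold, and fix a generic $C\in\mathcal C'$. For such a $C$ write $\varphi=\Psi|_C:C\to\tc_{g_C}$; by Lemma~\ref{lem:Fubini-main-prop-1}(1) its essential image is infinite, in particular has at least three points. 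For a.e.\ $(\xi,\xi')\in E_C$, parts~(3) and~(4) of Lemma~\ref{lem:Fubini-main-prop-1} say that $\varphi$ intertwines the inversion $\iota_{\xi,\xi'}$ on $C$ (Lemma~\ref{lem:inversion}) with a transformation of $\tc_{g_C}$ that carries $\Psi(\theta)$ to $\Psi(\theta')$ for the two intersection points, and by Remark~\ref{rk:inversion-target} this target transformation is (the restriction to the essential image of) an inversion $f(\iota_{\xi,\xi'})$ of $\tc_{g_C}$. The identity $\varphi\circ\iota_{\xi,\xi'}=f(\iota_{\xi,\xi'})\circ\varphi$ holds $\Gm_C$-a.e.\ because varying the linking pair sweeps out, via the implicit function theorem exactly as in the proof of Lemma~\ref{lem:Fubini-main-prop-1}, a conull set of points of $C$. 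Taking $\mathcal I$ to be the set of inversions $\iota_{\xi,\xi'}$ with $(\xi,\xi')$ in a positive-measure subset of $E_C$, Lemma~\ref{lem:inv-generate} guarantees $\mathcal I$ generates $\PSL_2(\bbr)$, so Lemma~\ref{lem:equi-map-circle} produces a continuous homomorphism $\rho_C:\PSL_2(\bbr)\to\PGL_2(\lf_v)$ extending $f$, with $\varphi$ itself a $\PSL_2(\bbr)$-equivariant map $C\to\tc_{g_C}\cong\mathbb P\lf_v$; a nontrivial continuous homomorphism $\PSL_2(\bbr)\to\PGL_2(\lf_v)$ forces $\lf_v=\bbc$ (a $p$-adic group admits no unbounded continuous image of the connected group $\PSL_2(\bbr)$, and the image is unbounded since $\varphi$ is genuinely $\PSL_2(\bbr)$-equivariant onto an infinite set) and, after composing with an automorphism, is the standard inclusion or complex conjugation. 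Hence $\varphi$ agrees a.e.\ with a Möbius map $C\to\mathbb P\bbc$.

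Once a single generic circle $C$ is handled, I would upgrade to a global statement. The group $\Gamma$ acts ergodically on $\mathcal C$ (equivalently on $\mathbb S^2\times\mathbb S^2$) because it is a lattice in $\PGL_2(\bbc)$ and the geodesic/diagonal flow is mixing; so the conull $\Gamma$-invariant set $\mathcal C'$ may be taken $\Gamma$-invariant, and for every $C\in\mathcal C'$ the restriction $\Psi|_C$ is a.e.\ a Möbius map. Now choose three fixed points $\xi_1,\xi_2,\xi_3\in\mathbb S^2$ in general position; for a.e.\ $\xi\in\mathbb S^2$ one can pick a circle through $\xi$ and two of the $\xi_i$ lying in $\mathcal C'$, and the Möbius transformation realizing $\Psi$ on that circle is pinned down by its values at the $\xi_i$'s. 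Matching these local Möbius maps on overlapping circles (any two generic circles meet in two points, a positive-measure condition) forces them to be restrictions of a single rational self-map $F$ of $\mathbb S^2$; concretely, in a stereographic chart $\Psi$ restricted to a.e.\ line and a.e.\ circle is a linear fractional function of the chart coordinate, and a measurable function with this property on a.e.\ line through each of three points is (after a standard separation-of-variables / rigidity argument for the cross-ratio) a rational map $\mathbb S^2\to\mathbb P\bbc\times\mathbb P\bbc$. Since $\Psi$ is $\Gamma$-equivariant and agrees with $F$ on a conull set, $F$ is $\Gamma$-equivariant as well.

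The main obstacle is the passage from ``$\Psi|_C$ is a.e.\ Möbius for a.e.\ $C$'' to ``$\Psi$ is globally rational.'' The per-circle conclusion only determines $\Psi$ up to an a priori $C$-dependent Möbius map $\rho_C$, and one must show the assignment $C\mapsto\rho_C$ is itself rigid (measurable, then rational, then constant-in-the-appropriate-sense). The clean way to do this is to exploit that $\Psi$ is simultaneously equivariant for the one-parameter families $C_t(\xi,\xi')$ that foliate $\mathbb S^2$ in many directions: the compatibility relations in Lemma~\ref{lem:Fubini-main-prop-1}(4) between the circle through $\theta,\theta'$ and a transverse circle give over-determined functional equations that a genuinely two-parameter measurable $\Psi$ must satisfy, and these pin down a rational solution uniquely once three points are normalized. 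I would therefore spend the bulk of the argument making this gluing precise --- showing the local Möbius data form a cocycle that is a.e.\ a coboundary for a rational map --- while invoking Lemma~\ref{lem:PsiGmZd}(1) repeatedly to discard the various null sets where intersections degenerate. The determination $\lf_v=\bbc$ is by contrast immediate from the existence of a continuous $\PSL_2(\bbr)$-action on an infinite subset of $\mathbb P\lf_v$, which cannot happen over a non-Archimedean $\lf_v$.
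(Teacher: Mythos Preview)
Your treatment of a single generic circle is essentially the paper's argument: assemble the inversions from $E'_C$, invoke Lemmas~\ref{lem:inversion}, \ref{lem:inv-generate}, Remark~\ref{rk:inversion-target}, and Lemma~\ref{lem:equi-map-circle} to obtain a continuous homomorphism $\PSL_2(\bbr)\to\PGL_2(\lf_v)$, and deduce $\lf_v=\bbc$. The paper phrases the last step via the structure theorem in \cite[Ch.~I, \S1.8]{Margulis-Book} (such a homomorphism comes from a continuous field embedding $\bbr\hookrightarrow\lf_v$, which cannot land in a non-Archimedean field), but your connectedness argument is equivalent.

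Where you diverge is the globalization, and here the paper's route is much shorter than what you sketch. You propose fixing three reference points, threading circles through them, and gluing the resulting per-circle M\"obius maps via a cocycle argument you acknowledge is the crux and do not finish. The paper avoids this entirely: it picks just \emph{two} generic circles $C$ and $C'$ meeting at $\{\xi,\xi'\}$, both satisfying Lemma~\ref{lem:Fubini-main-prop-1}, and applies stereographic projection from $\xi$. Then $C$ and $C'$ become two lines in the plane meeting at the image of $\xi'$, giving coordinates on $\bbr^2$; in these coordinates $\Psi$ is a measurable function of two variables that is rational in each variable separately (by the per-circle conclusion). At this point one simply cites \cite[Lemma~17]{Margulis-Arith-Inv}: a measurable function on the plane that is rational in each variable is rational. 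No three-point normalization, no gluing cocycle, no ergodicity of $\Gamma$ on $\mathcal C$ is needed for this step.

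So your proposal is not wrong, but the second half is over-engineered and left incomplete; replacing it with the two-circles-plus-stereographic trick and the citation to Margulis's lemma closes the argument in a paragraph.
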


\begin{proof}
Let $C\in\mathcal C$ be so that Lemma~\ref{lem:Fubini-main-prop-1} holds true and let $E'_C\subset E_C$ be as in loc.\ cit.
In particular the essential image of $\Psi|_C$ belongs to $\tc_{g_C}$. Let $\mathcal I$ be the collection of inversions of $C$ obtained 
by $(\xi,\xi')\in E'_C$, see Lemma~\ref{lem:inversion}. Then by Lemma~\ref{lem:inv-generate}, $\mathcal I$ generates $\PSL_2(\bbr)$.

Recall that the essential image of $\Psi|_C$ in $\tc$ is infinite, therefore, an inversion is uniquely determined by its restriction to the essential image of $\Psi$. By Lemma~\ref{lem:Fubini-main-prop-1} and Remark~\ref{rk:inversion-target},
$\Psi$ induces a map $f$ from $\mathcal I$ into the set of inversions on $\tc$. 

Since the essential image of $\Psi|_C$ in $\tc$ is infinite,
we get from Lemma~\ref{lem:equi-map-circle} that $f$ extends to a continuous homomorphism from $\PSL_2(\bbr)$ 
into $\PGL_2(\lf_v)$. 
Such a homomorphism can only arise from algebraic constructions as follows: There exists a continuous 
homomorphism of fields $\vartheta: \bbr\to\lf_v$ and an isomorphism of algebraic groups $\varphi: {}^\vartheta\PGL_2\to\PGL_2$ 
so that $f(g)=\varphi(\vartheta^0(g))$ for all $g\in\PSL_2(\bbr)$ where $\vartheta^0:\PGL_2(\bbr)\to\PGL_2(\vartheta(\bbr))$ is the isomorphism induced by $\vartheta$, see~\cite[Ch.~I, \S1.8]{Margulis-Book}. 

Since there are no monomorphism from $\bbr$ into non-Archimedean local fields, continuous or not, we get that $\lf_v=\bbc$.  

We now show that $\Psi$ agrees with a rational map\footnote{In the case at hand this assertion can be proved using more elementary arguments, e.g.\ by choosing three parallel circles.} almost surely. Let $C$ and $C'$ 
be two circles which intersect at two points and both satisfy Lemma~\ref{lem:Fubini-main-prop-1} --- the set of intersecting circles has positive measure in $\mathcal C$, hence, two such circles exist. Let $C\cap C'=\{\xi,\xi'\}$ where $\xi\neq \xi'$.
Using the stereographic projection of $\mathbb S^2$ with $\xi$ as the pole, 
we get coordinates on $\bbr^2$ induced by two lines $\ell$ and $\ell'$ corresponding $C$ and $C'$, respectively, 
which intersect at the image of $\xi'$. 
Thus $\Psi$ induces a measurable map in two variables which is rational in each variable.
In view of~\cite[Lemma 17]{Margulis-Arith-Inv}, we thus get that $\Psi$ agrees with a rational map almost surely.
\end{proof}

\subsection{Proofs of the main theorems}\label{sec:proofs}
In this section we complete the proofs of Theorem~\ref{thm:sup-rigid} and Theorem~\ref{thm:main-Mnfld}
assuming Proposition~\ref{thm:psi-circle-circle}.

\begin{proof}[Proof of Theorem~\ref{thm:sup-rigid}]
In view of Lemma~\ref{lem:Psi-rational} we may assume $\lf_v=\bbc$ and $\Psi$ agrees with a rational map almost surely. 
Theorem~\ref{thm:sup-rigid} follows from this by~\cite[\S1.3]{Margulis-Arith-Inv} as the action of $\PGL_2\times \PGL_2$ on its boundary is strictly effective.
\end{proof}

\begin{proof}[Proof of Theorem~\ref{thm:main-Mnfld}]
We recall the argument from~\cite[Proof of Thm.~1, p.\ 97]{Margulis-Arith-Inv}. 
Let the notation be as in~\S\ref{sec:nf-gp}; in particular, 
$v_0$ and $\sigma={\rm id}$ are the place and the embedding which give rise to the lattice $\Gamma$ in $\PGL_2(\bbc)$.
By Theorem~\ref{thm:sup-rigid} for any $(v,\sigma)\neq (v_0,{\rm id})$ we have $\sigma(\Gamma)$ is 
bounded in $\G(\gfield_v)$.

Let $\G'={\rm R}_{\gfield/\bbq}(\G)$ where ${\rm R}_{\gfield/\bbq}$ is the restriction of scalars.
Then $\G'(\bbr)$ is naturally identified with $\prod\G(\gfield_v)$ where the product is taken over all the Archimedean places.
Let $\varphi(\Gamma)$ denote the image of $\Gamma$ in $\G'$ --- note that 
$\varphi(\Gamma)$ is isomorphic to $\Gamma$ and $\varphi(\Gamma)\subset\G'(\bbq)$. 

Let $\bf F$ be the Zariski closure of $\varphi(\Gamma)$ in $\G'$. Then the natural map $\varrho:{\bf F}\to\G$ is an $\bbr$-epimorphism. 
Let ${\bf K}=\ker(\varrho)$. Recall that any compact subgroup
of a real algebraic group is itself algebraic. In view of this and since $\varphi(\Gamma)$ has bounded image in ${\bf K}(\bbr)$, 
we get that ${\bf K}(\bbr)$ is compact.

Moreover, $\sigma(\Gamma)$ is bounded in $\G(\gfield_v)$ for all non-Archimedean places and $\Gamma$ is finitely generated,
hence, $\varphi(\Gamma)\cap{\bf F}(\bbz)$ in finite index in $\varphi(\Gamma)$. 
Further, since ${\bf K}(\bbr)$ is compact, $\varrho({\bf F}(\bbz))$ is discrete in $\G(\bbr)$;
we get that $\Gamma$ and $\varrho({\bf F}(\bbz))$ are commensurable. The proof is complete.
\end{proof}


\section{The cocycle and equivariant measurable maps}\label{sec:equiv-map}
In this section we recall a construction due to Margulis which produces 
an equivariant measurable map between certain projective spaces. 
   
Let $\Gamma\subset G$ be a uniform lattice and let $\lf$ be a local field of characteristic zero.
We assume fixed a homomorphism 
\[
\rho:\Gamma\to\PGL_2(\lf)
\] 
whose image is {\em unbounded and Zariski dense} --- in our application, $\rho$ will actually be a monomorphism.


\subsection{Characteristic maps and cocylces}\label{sec:cocycle}
Let $B$ denote the group of upper triangular matrices in $\PGL_2(\lf)$.
We now recall from~\cite[Ch.~V]{Margulis-Book} the construction of a measurable map from 
$Q\backslash G$ to $B\backslash \PGL_2(\lf)$ which is associated to the representation $\rho$. This approach relies on the multiplicative ergodic theorem.   
 
%
%


We fix a fundamental domain $F$ which is the invariant lift of a Dirichlet fundmental domain for $\Gamma$ in $\mathbb H^3$ --- in particular, $F$ is compact and $\partial F$ is a union of finitely many suborbiforlds of lower dimension. 

For any $g\in G$, there exists a unique $\gamma_g\in\Gamma$ so that $g\in F\gamma_g$.
Set
\[
\omega(g):=\rho(\gamma_g).
\]
Then $\omega:G\to\PGL_2(\lf)$ is a Borel map and
\be\label{eq:f-Gamma-equi}
\omega(g\gamma)=\omega(g)\rho(\gamma)\text{ for all $g\in G$ and $\gamma\in\Gamma$.}
\ee
Define $b_\omega'(g,y)=\omega(y)\omega(gy)^{-1}$ for all $g\in G$ and $y\in G$. Note that by~\eqref{eq:f-Gamma-equi}
we have $b_\omega'(g,y)=b_\omega'(g,y\gamma)$ for all $\gamma\in\Gamma$. Define
\[
b_\omega(g,x)=b_\omega'(g,\pi^{-1}(x))\text{ for all $g\in G$ and $x\in X$}.
\]
Then $b_\omega(g_1g_2,x)=b_\omega(g_2,x)b_\omega(g_1,g_2x)$. That is: $b_\omega$ is a cocycle.

Define the cocycle
\be\label{eq:def-cocycle}
u(n,x)=b_\omega(a_{n}, x)\text{ for all $x\in X$ and all $n\in\bbz$};
\ee
where $a_t$ is defined in~\eqref{eq:def-a-t}.

%



\begin{thm}[Cf.~\cite{Margulis-Book}, Ch.~V and VI]\label{thm:meas-map-cocle}\label{thm:equiv-map-bry}
Let the notation be as above.  
\begin{enumerate}
\item There exists a unique $\Gamma$-equivariant Borel measurable map 
\[
\psi: \mathbb S^2\to B\backslash \PGL_2(\lf).
\]

\item There exist some $\lambda_1>0$ so that the following holds. 
Let $w\in \lf^2\setminus\{0\}$. Then for $\vol_X$-a.e.\ $x\in X$ we have  
\[
\lim_{n\to\infty}\frac{1}{n}\log(\|u(n,x)w\|/\|w\|)= \lambda_1
\]
\end{enumerate}
\end{thm}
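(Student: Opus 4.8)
The plan is to obtain both statements as consequences of the standard construction of the characteristic map via the multiplicative (Oseledets) ergodic theorem, applied to the cocycle $u(n,x) = b_\omega(a_n,x)$. The key input is that $\rho(\Gamma)$ is unbounded and Zariski dense in $\PGL_2(\lf)$, together with the fact that the right $\{a_t\}$-action on $X = G/\Gamma$ is ergodic with respect to the invariant probability measure $\vol_X$ (indeed mixing, by Howe--Moore, since $\{a_t\}$ is a one-parameter diagonalizable subgroup of the semisimple group $G = \PGL_2(\bbc)$ acting on an irreducible lattice quotient). First I would verify the integrability hypotheses of the multiplicative ergodic theorem: because $F$ is compact and $\rho$ has discrete image, the displacement $\omega(a_1 y)\omega(y)^{-1}$ stays in a fixed compact-to-finite set relative to $y$ ranging over $F$, so $\log^+\|u(1,x)^{\pm1}\|$ is bounded, hence integrable.

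Granting integrability, the multiplicative ergodic theorem for the $\GL_2(\lf)$-cocycle $u$ over the ergodic transformation $x \mapsto a_1 x$ produces a Lyapunov spectrum. For a $2$-dimensional cocycle there are at most two distinct Lyapunov exponents $\lambda_1 \geq \lambda_2$. I would first rule out $\lambda_1 = \lambda_2$: if the top exponent had full multiplicity, then, after a standard argument (cf.\ \cite[Ch.~V--VI]{Margulis-Book}), the projectivized cocycle would fix a measurable section, forcing $\rho(\Gamma)$ into a bounded or reducible-with-invariant-flag subgroup, contradicting Zariski density combined with unboundedness. Hence $\lambda_1 > \lambda_2$, the top Oseledets subspace is a genuine line, and the measurable assignment $x \mapsto (\text{expanding line at } x)$ descends — using the cocycle identity \eqref{eq:def-cocycle} and $\Gamma$-equivariance \eqref{eq:f-Gamma-equi} — to a $\Gamma$-equivariant measurable map $\mathbb S^2 = \partial\bbh^3 \to \mathbb P\lf^1 = B\backslash\PGL_2(\lf)$, which is statement~(1). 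Uniqueness follows because any $\Gamma$-equivariant measurable boundary map must, by the contraction dynamics of $a_t$, coincide $\vol$-a.e.\ with the Oseledets line, using that there is no $\rho(\Gamma)$-invariant probability measure on $\mathbb P\lf^1$ other than the one carried by the limit set (which is a single point only if $\rho(\Gamma)$ is reducible, again excluded).

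For statement~(2): the number $\lambda_1$ from part~(1) is exactly the top Lyapunov exponent of the cocycle $u$. By the pointwise conclusion of the multiplicative ergodic theorem, for $\vol_X$-a.e.\ $x$ and every nonzero $w \in \lf^2$ not lying in the lower Oseledets subspace one has $\tfrac1n\log(\|u(n,x)w\|/\|w\|) \to \lambda_1$. Since the lower subspace is one-dimensional and varies measurably, for a fixed $w$ the set of $x$ for which $w$ lies in that subspace is a measurable set; I would argue it is null, because otherwise the family of such "bad'' directions would be $\rho(\Gamma)$-invariant in a way incompatible with the Zariski density of $\rho(\Gamma)$ (a single fixed direction for a Zariski-dense subgroup of $\PGL_2$ is impossible). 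Therefore for each fixed $w$ the limit equals $\lambda_1$ for a.e.\ $x$, giving (2). One should also check $\lambda_1 > 0$: since the exponents of $u$ and of its inverse transpose are related by sign flip and the cocycle takes values in $\PGL_2$ (determinant-free, so $\lambda_1 + \lambda_2$ reflects only the logarithmic size of the conformal distortion), the gap $\lambda_1 > \lambda_2$ together with the normalization forces $\lambda_1 > 0$; alternatively, positivity is inherited from the fact that the unperturbed cocycle $u_0(n,x) = \rho(\gamma_{a_n})$ grows because $\rho(\Gamma)$ is discrete and unbounded and $a_n x$ equidistributes, so displacements accumulate linearly.

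**Main obstacle.** The substantive point is not the invocation of the multiplicative ergodic theorem but the \emph{simplicity of the top exponent}, i.e.\ ruling out $\lambda_1 = \lambda_2$ and, relatedly, the genuine nonatomicity/nontriviality needed so that the resulting boundary map is not constant. This is where Zariski density and unboundedness of $\rho(\Gamma)$ must be used in an essential way, via a Furstenberg-type argument on the $\rho(\Gamma)$-action on $\mathbb P\lf^1$ (no invariant probability measure supported on fewer than three points); the details are exactly those of \cite[Ch.~V--VI]{Margulis-Book}, which is why the theorem is stated as a citation.
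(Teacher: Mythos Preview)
Your proposal is correct and follows essentially the same approach as the paper, which itself simply cites \cite[Ch.~V, Thm.~3.2 and Ch.~VI, Thm.~4.3]{Margulis-Book} for part~(1) and \cite{Fu-Boundary, Golds-Marg} together with \cite[Ch.~V, Thm.~3.2]{Margulis-Book} for the positivity of $\lambda_1$ in part~(2). You have correctly identified both the multiplicative ergodic theorem construction of $\psi$ and the essential input---simplicity of the top exponent coming from Zariski density and unboundedness of $\rho(\Gamma)$---that the paper leaves to those references.
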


\begin{proof}
The existence of a measurable map $\psi$ as in~(1) 
is proved in~\cite[Ch.~V, Thm.~3.2]{Margulis-Book}. An alternative approach is~\cite[Ch.~VI, Thm.~4.3]{Margulis-Book}
which is based on the work of Furstenberg.  

The fact that there exists $\lambda_1>0$ so that part~(2) holds follows from the fact that $\rho(\Gamma)$ is unbounded 
and Zariski dense, see~\cite{Fu-Boundary, Golds-Marg} and~\cite[Ch.~V, Thm.~3.2]{Margulis-Book}.  
\end{proof}





\subsection{A set of uniform convergence}\label{sec:set-unif-cov}
Let $\vare>0$. Recall that $F$ is the invariant lift of a compact Dirichlet fundamental domain 
$E$ for the action of $\Gamma$ on $\mathbb H^3$.
Let $\mathcal C_{E,\vare}\subset F$ be as in Lemma~\ref{lem:Egorov-Noatom}.   

Let $e_1=(1, 0)\in \lf^2$. There exists a compact subset $F_{\vare}'\subset \mathcal C_{E,\vare}$ with 
$\vol_X(F_{\vare}')>1-\vare^8$ so that 
the convergence in Theorem~\ref{thm:equiv-map-bry}(2) is uniform. 
That is for every $\eta>0$ there exists some $n_\eta$ so that for all $n>n_{\eta}$
and any $x\in F_\vare'$ we have 
\be\label{eq:unif-conv-F}
\biggl|\Bigl(\frac{1}{n}\log\|u(n,x)e_1\|\Bigr)-\lambda_1\biggr|<\eta\;\;\text{ for all $n>n_\eta$.}
\ee

Since the measure $\vol_X$ is $G$-invariant and in particular, $\SO(2)$-invariant, the following holds. 
There exists a compact subset $F_\vare\subset F_\vare'$ with 
\[
\vol_X(F_\vare)>1-\vare^4
\]
so that for every $g\in F_\vare$ we have $|\{\theta\in[0,2\pi] :r_\theta g\in F_\vare'\}|> 1-\vare^4$ 
where $r_\theta$ denotes the rotation matrix with angle $\theta$.

\medskip

Let ${\tau}>0$ be fixed. For every $\alpha>0$, 
let $F_\vare({\tau},\alpha)\subset F_\vare$ be the subset with the property that for every $g\in F_\vare({\tau},\alpha)$ we have 
\be\label{eq:interior-fd}
|\{\theta\in[0,2\pi]: \mathsf B(a_{{\tau}}r_\theta g, \alpha)\subset F\omega(a_{{\tau}}r_\theta g)\}|>2(1-2\vare^4)2\pi
\ee
where for every $h\in G$ we have $\omega(h)\in\Gamma$ is the element so that $h\in F\omega(h)$ and $\mathsf B(h,\alpha)$ 
denote the ball of radius $\alpha$ centered at $h$.
 
Let ${\tau}$ and $\vare>0$ be fixed. There exists some $\alpha_0=\alpha_0(\vare)$ so that 
\be\label{eq:tau0-alpha0}
\vol(F_\vare({\tau},\alpha_0))>1-2\vare^4.
\ee


\subsection*{Random walks and distances}\label{sec:hyp-dist-rm-wk-ccycl}
Let $\mathcal T$ denote the Bruhat-Tits tree (if $\lf$ is non-Archimedean) or $\bbh^3$ 
if $\lf=\bbc$. Let ${\dt}$ denote the $\PGL_2(\lf)$-invariant metric on $\mathcal T$ --- that is: if $\lf=\bbc$,
then $\dt$ is the hyperbolic metric and if $\lf$ is non-Archimedean, then $\dt$ is the path metric on a tree. 
We fix a base point $o\in \mathcal T$ which we assume to be the image of the identity element of $\PGL_2(\lf)$ in $\mathcal T$.
Abusing the notation, given an element $g\in\PGL_2(\lf)$ we often write $\dt(g,o)$ for $\dt(g\cdot o, o)$.

Recall that $\Gamma$ is a uniform lattice in $G$. Therefore, 
\be\label{eq:rw-trivial-bd}
\dt(u(n,x),o)\leq \ref{k:rw-triv-bd}n
\ee
for some $\consta\label{k:rw-triv-bd}$ depending on the representation $\rho$.

\begin{lemma}\label{lem:rm-wk-ccycl-dist-xi-t}
Let $0<\vare<1/2$ be so that $\ref{k:rw-triv-bd}\vare<0.01\lambda_1$.
Let $F_\vare$ be defined as in~\S\ref{sec:set-unif-cov}.
There exists some $N_0$ so that for any $n>N_0$ we have the following.
\begin{enumerate} 
\item For every geodesic $\xi=\{\xi_t\}\subset\mathcal T$ with $\xi_0=o$ and every $g\in F_\vare$, 
there exists a subset $R_{g,\xi}\subset [0,2\pi]$ with $|R_{g,\xi}|>2(1-\vare)\pi$ so that for all $\theta\in R_{g,\xi}$
we have 
\[
\dt(u(n,r_\theta g)\cdot o,\xi_t)>t+\lambda_1n/3.
\]
\item For every geodesic $\xi=\{\xi_t\}\subset\mathcal T$ with $\xi_0=o$ and every $g\in F_\vare$ we have
\[
\frac{1}{2\pi}\int_0^{2\pi}\dt(u(n,r_\theta g)\cdot o,\xi)\diff\!\theta> t+\lambda_1n/5.
\]
\end{enumerate}
\end{lemma}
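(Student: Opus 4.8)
The plan is to view, for a fixed $g\in F_\vare$, the family $\{u(n,r_\theta g):\theta\in[0,2\pi]\}$ as a random-walk-type trajectory parametrized by the rotation angle, and to run the hyperbolic-geometry estimates of \S\ref{sec:hyp-geom} on the circle $[0,2\pi]$ equipped with normalized Lebesgue measure. Concretely, fix $g\in F_\vare$ and a geodesic $\xi=\{\xi_t\}$ with $\xi_0=o$. For $\theta$ in the large set $\{\theta:r_\theta g\in F_\vare'\}$ (measure $>1-\vare^4$ by the definition of $F_\vare$), the uniform convergence~\eqref{eq:unif-conv-F} gives, for $n>n_\eta$ with $\eta$ small,
\[
\tfrac1n\log\|u(n,r_\theta g)e_1\|>\lambda_1-\eta,
\]
which translates into a lower bound $\dt(u(n,r_\theta g)\cdot o,o)\gtrsim(\lambda_1-\eta)n$ in the metric $\dt$ on $\mathcal T$ (the Cartan/KAK projection relates $\log$ of the operator norm, or of $\|\cdot e_1\|$ after the $\SO(2)$-averaging built into $F_\vare$, to $\dt(\cdot,o)$ up to bounded error). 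Combined with the trivial upper bound~\eqref{eq:rw-trivial-bd}, this puts us in a setting formally identical to the hypotheses of Lemma~\ref{lem:hyp-1}/Lemma~\ref{lem:hyp-2}, but now with the parameter being $\theta$ rather than a discrete time, and with the base point $u(0,r_\theta g)\cdot o=o$.

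First I would prove part~(1). The point $u(n,r_\theta g)\cdot o$ lies at distance $\geq(\lambda_1-\eta)n$ from $o$ for $\theta$ in a set of measure $>1-\vare^4$; its ``direction from $o$'' is, by the equivariance~\eqref{eq:f-Gamma-equi} of $\omega$ and the fact that $\psi$ is the boundary value of the cocycle (Theorem~\ref{thm:equiv-map-bry}(1)), governed by the boundary map $\psi$ evaluated near the attracting point, hence its distribution in $\partial\mathcal T$ is non-atomic in $\theta$ — this is exactly where Lemma~\ref{lem:Egorov-Noatom} and the choice $g\in\mathcal C_{E,\vare}$ enter: only a set of $\theta$ of measure $<\vare$ can send $u(n,r_\theta g)\cdot o$ into a fixed small cone $\mathsf N_\delta(\xi_\infty)$ around the endpoint of $\xi$. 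For the remaining $\theta$ (measure $>1-\vare^4-\vare>1-\vare$, using $\vare<1/2$ and absorbing the $\vare^4$), the angle $\alpha$ at $o$ between $\overline{o\,u(n,r_\theta g)\cdot o}$ and $\overline{o\xi_t}$ is bounded below, so the hyperbolic law of cosines~\eqref{eq:hyp-law-cos} yields
\[
\dt(u(n,r_\theta g)\cdot o,\xi_t)>t+\dt(u(n,r_\theta g)\cdot o,o)-O_\delta(1)>t+(\lambda_1-\eta)n-O_\delta(1)>t+\lambda_1n/3
\]
once $n>N_0$ is large enough (choosing $\eta<\lambda_1/2$ and $N_0$ to swallow the $O_\delta(1)$); this gives the set $R_{g,\xi}$ of part~(1). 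Part~(2) then follows by integrating: on $R_{g,\xi}$ use the bound from part~(1), on the complement (measure $<\vare$) use the trivial bound $\dt(u(n,r_\theta g)\cdot o,\xi_t)\geq t-\consta n$ from~\eqref{eq:rw-trivial-bd}, and compute
\[
\tfrac1{2\pi}\!\int_0^{2\pi}\!\!\dt(u(n,r_\theta g)\cdot o,\xi)\,\diff\theta>(1-\vare)(t+\lambda_1n/3)+\vare(t-\consta n)>t+\lambda_1n/5,
\]
the last inequality because $\vare\consta<0.01\lambda_1$ and $\vare<1/2$, so the loss $\vare(\lambda_1/3+\consta)n$ is far smaller than $(\lambda_1/3-\lambda_1/5)n$. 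This is the same bookkeeping as in the proof of Lemma~\ref{lem:hyp-2}.

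The main obstacle is the transition from the analytic statement~\eqref{eq:unif-conv-F} about $\|u(n,x)e_1\|$ to the \emph{geometric} statement that $u(n,r_\theta g)\cdot o$ has controlled distance \emph{and} non-atomically distributed direction in $\mathcal T$: one must check (i) that $\log\|u(n,x)e_1\|$ and $\dt(u(n,x)\cdot o,o)$ differ by $O(1)$ after the $\SO(2)$-averaging encoded in the definition of $F_\vare$ (so that $e_1$ is ``generic'' for a positive-measure set of rotations $\theta$, handled by the $F_\vare\subset F_\vare'$ construction), and (ii) that the direction of $u(n,r_\theta g)\cdot o$ as seen from $o$ approaches the value $\psi$ of the boundary map along the flow, so that Lemma~\ref{lem:Egorov-Noatom} (equivalently Lemma~\ref{lem:PsiGmZd}(2), non-atomicity of $\psi|_C$ pushed to circles) can be invoked to rule out concentration near $\xi_\infty$. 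Once these two points are in place, everything else is the law-of-cosines computation already carried out in Lemma~\ref{lem:hyp-2}, with $\theta\mapsto r_\theta g$ playing the role of the Borel family $u(n,\theta)$ there.
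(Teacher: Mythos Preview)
Your proposal is correct and follows essentially the same route as the paper: apply~\eqref{eq:unif-conv-F} with a fixed $\eta$ to get $\dt(u(n,r_\theta g)\cdot o,o)\gtrsim\lambda_1 n$ on the large set $\{\theta:r_\theta g\in F_\vare'\}$, invoke Lemma~\ref{lem:Egorov-Noatom} (using $g\in F_\vare\subset\mathcal C_{E,\vare}$) to get $|\{\theta:\psi(r_\theta g)\notin\mathsf N_\delta(\xi_\infty)\}|>2(1-\vare)\pi$, feed both into the hyperbolic law of cosines~\eqref{eq:hyp-law-cos} to obtain part~(1), and then derive part~(2) by the same splitting-and-integrating computation as in Lemma~\ref{lem:hyp-2}. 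The two technical points you flag as ``obstacles''---that $\log\|u(n,x)e_1\|$ controls $\dt(u(n,x)\cdot o,o)$, and that the direction of $u(n,r_\theta g)\cdot o$ from $o$ is governed by $\psi(r_\theta g)$---are exactly the implicit steps the paper also relies on without spelling out; you have identified them correctly and your plan handles them.
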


\begin{proof}
The proof is a special case of the argument presented in the proof of Lemma~\ref{lem:hyp-2}; 
we repeat parts of the argument for the convenience of the reader.

Apply~\eqref{eq:unif-conv-F} with $\eta=1/2$ and assume $n>n_{1/2}$ for the rest of the discussion.

For any $\theta\in[0,2\pi]$, let $q_\theta=u(n,r_\theta g)\cdot o$. By the hyperbolic law of cosines, see~\eqref{eq:hyp-law-cos}, we have
\[
\cosh(\overline{q_\theta\xi_t})= \cosh(\overline{o\xi_t})\cosh(\overline{oq_\theta})-\cos(\alpha)\sinh(\overline{o\xi_t})\sinh(\overline{oq_\theta})
\]
where $\alpha$ is the angle between $\overline{o\xi_t}$ and $\overline{oq_\theta}$.

Then in view of Lemma~\ref{lem:Egorov-Noatom} and the fact that $g\in F_\vare\subset \mathcal C_{E,\vare}$, 
there exists some $\delta>0$ so that 
\be\label{eq:d-ang-delta-beta}
|\{\theta\in[0,2\pi]: \psi(r_\theta g)\not\in\mathsf N_\delta(\xi_\infty)\}|> 2(1-\vare)\pi
\ee
where $\xi_\infty=\lim_{t\to\infty}\xi_t\in\PGL_2(\lf)/B$.

Therefore, we have 
\be\label{eq:law-cos-Theta-L2-psi}
\dt(u(n,r_\theta g)\cdot o,\xi_t)>t+\tfrac{\lambda_1n}{2}-O_\delta(1) 
\ee
for all $\theta$ which lies in the set appearing in~\eqref{eq:d-ang-delta-beta}. 
Assuming $n$ is large enough, depending on $\delta$, we get part~(1). 

\medskip

Recall that $\dt(u(n,h)\cdot o,o)\leq\ref{k:rw-triv-bd} n$ for all $h\in F$. In particular we have
\be\label{eq:triv-boud-vs-cosine}
\dt(u(n,h)\cdot o,\xi_t)\geq t-\ref{k:rw-triv-bd}n\quad\text{for all $n$.}
\ee

Part~(2) now follows from~\eqref{eq:law-cos-Theta-L2-psi} and~\eqref{eq:triv-boud-vs-cosine}
as Lemma~\ref{lem:hyp-2} was proved using~\eqref{eq:law-cos-Theta-L2} and~\eqref{eq:triv-boud-vs-cosine-Theta}.
\end{proof}

\begin{lemma}\label{eq:close-gen-prog} 
Let $0<\vare<1/2$ with $\ref{k:rw-triv-bd}\vare<0.01\lambda_1$ 
and let ${\tau}>N_0$ be a fixed parameter, where $N_0$ is as in Lemma~\ref{lem:rm-wk-ccycl-dist-xi-t}. 
Suppose $x\in X$ is so that there exists some $g_x\in F_\vare({\tau},\alpha_0)$ with 
\be\label{eq:close-to-N}
\dist(a_{{\tau}}r_\theta g_x,a_{{\tau}}r_\theta x)<\alpha_0/2\quad\text{for all $\theta\in[0,2\pi]$,}
\ee
see~\eqref{eq:tau0-alpha0}. Then the following hold.
\begin{enumerate}
\item Let $\xi=\{\xi_t\}\subset\mathcal T$ with $\xi_0=o$ be a geodesic. There exists a subset $\hat R_{x,\xi}\subset[0,2\pi]$ 
with $|\hat R_{x,\xi}|> 2(1-2\vare)\pi$ so that for all $\theta\in\hat R_{x,\xi}$ we have 
\[
\dt(u({\tau},r_\theta x), \xi_t)> t+ {\tau}\lambda_1/3. 
\]
\item Let $\xi=\{\xi_t\}\subset\mathcal T$ with $\xi_0=o$ be a geodesic. Then
\[
\frac{1}{2\pi}\int_0^{2\pi}\dt(u({\tau},r_\theta x), \xi_t)\diff\!\theta>t+{\tau}\lambda_1/5.
\]
\end{enumerate}
\end{lemma}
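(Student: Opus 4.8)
The statement to be proved is Lemma~\ref{eq:close-gen-prog}, which transfers the conclusions of Lemma~\ref{lem:rm-wk-ccycl-dist-xi-t} from a point $g_x$ lying in the ``good'' set $F_\vare(\tau,\alpha_0)$ to a nearby point $x$ satisfying the closeness hypothesis~\eqref{eq:close-to-N}. The key observation is that the cocycle $u(\tau,y)$ depends only on which translate $F\omega(y)$ contains the ball around $y$: if $a_\tau r_\theta y$ lies in the same copy of the fundamental domain as $a_\tau r_\theta g_x$, then $\gamma_{a_\tau r_\theta y}=\gamma_{a_\tau r_\theta g_x}$, and since $u(\tau,r_\theta y)=b_\omega(a_\tau,r_\theta y)=\omega(r_\theta y)\omega(a_\tau r_\theta y)^{-1}$, the only difference between $u(\tau,r_\theta x)$ and $u(\tau,r_\theta g_x)$ comes from the factor $\omega(r_\theta x)$ versus $\omega(r_\theta g_x)$ — which also agree provided $r_\theta x$ and $r_\theta g_x$ lie in the same translate of $F$. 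So the heart of the argument is a counting/measure estimate on the set of angles $\theta$ for which both identifications go through.

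\textbf{Step 1.} First I would use the defining property~\eqref{eq:interior-fd} of $F_\vare(\tau,\alpha_0)$: for $g_x\in F_\vare(\tau,\alpha_0)$, the set of $\theta\in[0,2\pi]$ with $\mathsf B(a_\tau r_\theta g_x,\alpha_0)\subset F\omega(a_\tau r_\theta g_x)$ has measure $>(1-2\vare^4)\cdot 2\pi$ (I am reading the factor $2(1-2\vare^4)2\pi$ in~\eqref{eq:interior-fd} as giving a bound of this order; in any case the complement has measure $\ll\vare^4$). Combining with~\eqref{eq:close-to-N}, which says $\dist(a_\tau r_\theta g_x,a_\tau r_\theta x)<\alpha_0/2$ for \emph{all} $\theta$, I get that for all such $\theta$ the point $a_\tau r_\theta x$ lies in the ball $\mathsf B(a_\tau r_\theta g_x,\alpha_0/2)\subset F\omega(a_\tau r_\theta g_x)$, hence $\omega(a_\tau r_\theta x)=\omega(a_\tau r_\theta g_x)$. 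A parallel (and simpler) argument, using that $g_x\in F_\vare\subset F_\vare'$ and the definition of $F_\vare$, together with~\eqref{eq:close-to-N} evaluated along the way, handles the factor $\omega(r_\theta x)=\omega(r_\theta g_x)$; here one should be slightly careful, since~\eqref{eq:close-to-N} controls $a_\tau r_\theta g_x$ vs.\ $a_\tau r_\theta x$ rather than $r_\theta g_x$ vs.\ $r_\theta x$, but since $a_\tau$ is a fixed isometry and $\alpha_0$ can be shrunk at the outset, this only costs another $\ll\vare^4$ in measure and is absorbed. Let $\Theta_x\subset[0,2\pi]$ be the set of $\theta$ for which both identifications hold; then $|\Theta_x|>2(1-c\vare^4)\pi$ for an absolute constant $c$, and $u(\tau,r_\theta x)=u(\tau,r_\theta g_x)$ for all $\theta\in\Theta_x$.

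\textbf{Step 2.} Now I invoke Lemma~\ref{lem:rm-wk-ccycl-dist-xi-t}, applied to the point $g=g_x\in F_\vare$ (note $F_\vare(\tau,\alpha_0)\subset F_\vare$) and $n=\tau>N_0$. Part~(1) gives a set $R_{g_x,\xi}$ with $|R_{g_x,\xi}|>2(1-\vare)\pi$ on which $\dt(u(\tau,r_\theta g_x)\cdot o,\xi_t)>t+\lambda_1\tau/3$. Intersecting with $\Theta_x$ produces $\hat R_{x,\xi}:=R_{g_x,\xi}\cap\Theta_x$, which has measure $>2(1-\vare)\pi-2c\vare^4\pi>2(1-2\vare)\pi$ (for $\vare$ small), and on this set the equality $u(\tau,r_\theta x)=u(\tau,r_\theta g_x)$ upgrades the distance bound to $\dt(u(\tau,r_\theta x)\cdot o,\xi_t)>t+\tau\lambda_1/3$, which is part~(1). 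For part~(2) I integrate over $[0,2\pi]$: on $\hat R_{x,\xi}$ the integrand is $\geq t+\tau\lambda_1/3$ as just shown, and on the complement I use the trivial lower bound $\dt(u(\tau,r_\theta x)\cdot o,\xi_t)\geq t-L_1\tau$ (the analogue of~\eqref{eq:triv-boud-vs-cosine}, valid since $a_\tau r_\theta x\in F\omega$ implies $r_\theta x$ is within bounded distance — or directly since $\Gamma$ is uniform so $\dt(u(\tau,y)\cdot o,o)\leq\ref{k:rw-triv-bd}\tau$ for $y$ in a fixed compact set, and $x$ can be taken in such a set; one may enlarge the constant to handle the $\alpha_0/2$ slop). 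Splitting the average exactly as in the last display of the proof of Lemma~\ref{lem:hyp-2} — with $1-2\vare$ playing the role of $1-\vare$ — yields, after the two reductions $\vare<1/4$ and $\ref{k:rw-triv-bd}\vare<0.01\lambda_1$, the bound $\frac1{2\pi}\int_0^{2\pi}\dt(u(\tau,r_\theta x)\cdot o,\xi_t)\,\diff\theta>t+\tau\lambda_1/5$, which is part~(2).

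\textbf{Main obstacle.} The genuinely delicate point is bookkeeping the various small exceptional sets of angles — the set where~\eqref{eq:interior-fd} fails, the set where the ``first-factor'' identification $\omega(r_\theta x)=\omega(r_\theta g_x)$ fails, and the set $[0,2\pi]\setminus R_{g_x,\xi}$ from Lemma~\ref{lem:rm-wk-ccycl-dist-xi-t} — and confirming that their total measure stays below $2\vare\pi$ so that $|\hat R_{x,\xi}|>2(1-2\vare)\pi$ survives, while also checking that $\alpha_0=\alpha_0(\vare)$ from~\eqref{eq:tau0-alpha0} was chosen small enough (independently of $\tau$, which is legitimate since~\eqref{eq:interior-fd} is a condition on a fixed $\tau$) that the $\alpha_0/2$-neighborhood in~\eqref{eq:close-to-N} is genuinely contained in the $\alpha_0$-ball of~\eqref{eq:interior-fd}. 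None of this is conceptually hard, but it is where an inattentive write-up could go wrong; everything else is a direct transcription of the proof of Lemma~\ref{lem:hyp-2} and Lemma~\ref{lem:rm-wk-ccycl-dist-xi-t}.
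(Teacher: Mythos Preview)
Your proposal is correct and follows essentially the same route as the paper: intersect the set $R_{g_x}$ from~\eqref{eq:interior-fd} with the set $R_{g_x,\xi}$ supplied by Lemma~\ref{lem:rm-wk-ccycl-dist-xi-t}(1), observe that on this intersection the cocycle values at $r_\theta x$ and $r_\theta g_x$ agree, and then deduce part~(2) from part~(1) by the same integral-splitting as in Lemma~\ref{lem:hyp-2}.

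One remark worth making: you are actually more scrupulous than the paper about the equality $u(\tau,r_\theta x)=u(\tau,r_\theta g_x)$. The paper's proof asserts this from~\eqref{eq:close-to-N} together with $\theta\in R_{g_x}$ alone, which controls only the factor $\omega(a_\tau r_\theta\,\cdot\,)$; you correctly note that one must also match $\omega(r_\theta\,\cdot\,)$. Your fix---using that $g_x\in F_\vare$ forces $r_\theta g_x\in F_\vare'\subset F$ for all but $O(\vare^4)$ of the angles, so that (after possibly shrinking $\alpha_0$ in terms of the fixed $\tau$) a suitable lift of $r_\theta x$ lies in the same translate---is the right idea. The phrase ``$a_\tau$ is a fixed isometry'' is not literally accurate (left multiplication by $a_\tau$ is not an isometry for the right-invariant metric), but what you need, and what is true, is that $a_\tau$ is a fixed diffeomorphism with bounded distortion depending only on $\tau$, so the cost can indeed be absorbed into the choice of $\alpha_0=\alpha_0(\vare)$ made in~\eqref{eq:tau0-alpha0}. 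In the paper's actual application (\S\ref{sec:proof-main-lemma}) this issue disappears anyway, since $x$ lies in a $\delta$-neighborhood of $F_\vare(\tau,\alpha_0)$ with $\delta$ chosen via~\eqref{eq:choose-delta}, so one has directly a lift $\tilde x$ with $\tilde x$ close to $g_x$.
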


\begin{proof}
We first prove part~(1). Let 
\[
R_{g_x}=\{\theta\in[0,2\pi]: \mathsf B(a_{{\tau}}r_\theta g_x, \alpha_0)\subset F\}.
\] 
Then by the definition of $F_\vare({\tau},\alpha_0)$, see~\eqref{eq:interior-fd}, we have $|R_{g_x}|>2(1-2\vare^4)\pi$. 

Let $R_{g_x,\xi}$ be as in Lemma~\ref{lem:rm-wk-ccycl-dist-xi-t}(1) applied to $g_x$ and $\xi$ and put
\[
\hat R_{x,\xi}:=R_{g_x}\cap R_{g_x,\xi}.
\] 
Note that $|\hat R_{x,\xi}|>2(1-2\vare)\pi$.

Let now $\theta\in\hat R_{x,k}$. By~\eqref{eq:close-to-N}, we have 
\[
\dist(a_{{\tau}}r_\theta g_x,a_{{\tau}}r_\theta x)<\alpha_0/2.
\] 
Since $\theta\in R_{x,g_x}$, we have $u({\tau},r_\theta x)=u({\tau},r_\theta g_x)$. 
Moreover, since $\theta\in R_{g_x,\xi}$ and ${\tau}>N_0$, we get from 
Lemma~\ref{lem:rm-wk-ccycl-dist-xi-t}(1) that 
\[
\dt(u({\tau}, r_\theta x),\xi_t)=\dt(u({\tau}, r_\theta g_x),\xi_t)>t+\lambda_1{\tau}/3,
\]
as was claimed in part~(1).

The proof of part~(2) is similar to the proof Lemma~\ref{lem:hyp-2} as we now explicate.
Recall from~\eqref{eq:rw-trivial-bd} that
\be\label{eq:R-theta-or-not}
\dt(u({\tau}, r_\theta x),\xi_t)\geq t-\ref{k:rw-triv-bd}{\tau}
\ee
for any $\theta\in[0,2\pi]$.

Using part~(1) and~\eqref{eq:R-theta-or-not} we obtain the following.
\begin{align*}
\frac{1}{2\pi}\int_0^{2\pi}\dt(u({\tau}, r_\theta x),\xi_t)\diff\!\theta&>(1-2\vare)(t+\tfrac{\lambda_1{\tau}}{3})+ 2(t-\ref{k:rw-triv-bd}{\tau})\vare\\
&> t+\lambda_1{\tau}/4-2\ref{k:rw-triv-bd}\lambda_1\vare {\tau}\\
{}^{\ref{k:rw-triv-bd}\vare<0.01\lambda_1\leadsto}&> t+\lambda_1{\tau}/5.
\end{align*}
The proof if complete.
\end{proof}


\section{The main lemma}\label{sec:basic-lemma}
The following lemma is one of the pivotal ingredients in the proof of Proposition~\ref{thm:psi-circle-circle}, 
and is one of the main technical tools in this paper.

\begin{lemma}[Main Lemma]\label{prop:main-prop-intro}\label{lem:basic-lemma}
Let the notation be as in \S\ref{sec:equiv-map}. Further, assume that
there are infinitely many closed $H$-orbits $\{Hx_i: i\in\bbn\}$ in $X=G/\Gamma$.
There exists some $\lambda_0=\lambda_0(\rho)>0$ with the following property. 


For every $\vare>0$, there exist positive integers $i_0$, ${\tau}$, and $N$ 
with the following properties. For all $i>i_0$, there exists a subset $Z_i\subset Hx_i$ with $\mu_{Hx_i}(Z_i)>1-\vare$ so that 
\[
\dt(u(n{\tau},z), o)> \lambda_0 {\tau}n\quad\text{ for all $z\in Z_i$ and all $n>N$}.
\]
\end{lemma}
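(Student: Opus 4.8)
The plan is to use the recurrence structure of closed $H$-orbits together with the cocycle growth estimate from Lemma~\ref{eq:close-gen-prog} to produce, for each sufficiently deep orbit $Hx_i$, a large-measure set where the random walk driven by $a_{\tau}$ escapes linearly in $\mathcal T$. The key point is that the bound in Lemma~\ref{eq:close-gen-prog}(2) is an \emph{averaged} lower bound over the circle $\{r_\theta x\}$, so I must convert an average-over-rotations statement into a pointwise-in-$z$, linear-in-$n$ statement along the orbit $Hx_i$. First I would fix $\vare>0$, choose $\vare'$ comparable to a small power of $\vare$, and set $\tau > N_0$ as in Lemma~\ref{lem:rm-wk-ccycl-dist-xi-t}; then $\lambda_0$ will be a fixed fraction of $\lambda_1$, say $\lambda_0 = \lambda_1/20$, chosen so that the telescoping argument below closes.

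The core step is the following. On a closed orbit $Hx_i$ with its $H$-invariant probability measure $\mu_{Hx_i}$, consider the $a_{\tau}$-flow. Since $Hx_i$ is $a_{\tau}$-invariant and $\mu_{Hx_i}$ is $a_{\tau}$-ergodic (or at least stationary), I would apply the Birkhoff/Maximal ergodic theorem to the function $z \mapsto \dt(u(\tau,z),o)$, using the cocycle identity $u((n+1)\tau,z) = u(\tau,z)\,u(n\tau, a_{\tau}z)$ which gives the subadditive-type relation $\dt(u((n+1)\tau,z),o) \ge \dt(u(n\tau,a_{\tau}z), o) - \dt(u(\tau,z)^{-1},o)$, hence a lower bound of the form $\dt(u(n\tau,z),o) \gtrsim \sum_{k<n}\big(\text{fiberwise increment at } a_{k\tau}z\big)$. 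The increments are controlled from below, on average over rotations $r_\theta$, by Lemma~\ref{eq:close-gen-prog}(2) applied with $\xi$ a geodesic through $o$ — but I need the hypothesis \eqref{eq:close-to-N} that the orbit point is close to the good set $F_\vare(\tau,\alpha_0)$. This is where the infinitude of closed $H$-orbits enters: as $i\to\infty$, the orbits $Hx_i$ equidistribute (or at least their supports become $\alpha_0/2$-dense relative to the good set, by a pigeonhole/volume argument using that $\vol_X(F_\vare(\tau,\alpha_0)) > 1-2\vare^4$), so for $i > i_0$ a $(1-\vare)$-fraction of $Hx_i$ consists of points $x$ admitting $g_x \in F_\vare(\tau,\alpha_0)$ with \eqref{eq:close-to-N}. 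Feeding this into Lemma~\ref{eq:close-gen-prog}(1), for a definite proportion of angles $\theta$ we get $\dt(u(\tau,r_\theta x),\xi_t) > t + \tau\lambda_1/3$, which with $\xi_t$ chosen pointing toward $u(k\tau, a_{(k+1)\tau}z)\cdot o$ yields a positive increment $\ge \tau\lambda_1/c$ at each good step.

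Assembling the telescoping sum: for each $z$ in the good set $Z_i$, along the orbit of $z$ under $a_{\tau}$ a definite density of the first $n$ steps are "good" (this uses a second application of the ergodic/maximal inequality, now in the $n$-variable, to pass from "positive increment for a $(1-2\vare)$-fraction of rotations" to "positive increment at a $(1-c\vare)$-fraction of times $k<n$ for a.e.\ starting point", shrinking $Z_i$ if necessary), while at the remaining bad steps the increment is bounded below by $-L_1\tau$ via the trivial estimate \eqref{eq:rw-trivial-bd}. Balancing $(1-c\vare)\cdot \tfrac{\lambda_1\tau}{c} - c\vare\cdot L_1\tau \ge \lambda_0 \tau$ for $n > N$ (the threshold $N$ absorbing the additive $O_\delta(1)$ and the $n_\eta$ from uniform convergence) gives exactly $\dt(u(n\tau,z),o) > \lambda_0\tau n$ on $Z_i$ with $\mu_{Hx_i}(Z_i) > 1-\vare$.

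The main obstacle I expect is making the equidistribution input rigorous and uniform in $i$: I need not just that each closed $H$-orbit is somewhere near $F_\vare(\tau,\alpha_0)$, but that a $(1-\vare)$-proportion of it is, \emph{and} that the rotation-circle hypothesis \eqref{eq:close-to-N} holds simultaneously for a.e.\ $\theta$ — this really needs that, for large $i$, $\mu_{Hx_i}$ is close in weak-$*$ to $\vol_X$ (a Ratner-type / Mozes–Shah equidistribution statement for closed $H=\PGL_2(\bbr)$-orbits in $G/\Gamma$), or an effective substitute. Once that is in hand, the two applications of the ergodic theorem and the telescoping are routine but must be bookkept carefully so that all the $\vare$-losses and the additive constants are swallowed by the choice of $\lambda_0$ and $N$.
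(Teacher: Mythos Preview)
Your high-level plan is right: Mozes--Shah equidistribution to make most of $Hx_i$ lie near $F_\vare(\tau,\alpha_0)$, a telescoping of one-step increments of $\dt(\cdot,o)$, and a maximal inequality to turn averaged bounds into pointwise-in-$n$ bounds on a set of measure $>1-\vare$. The choice $\lambda_0=\lambda_1/\text{(constant)}$ is also what the paper gets.

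The gap is in the passage you flag yourself: ``positive increment for a $(1-2\vare)$-fraction of rotations'' does \emph{not} yield ``positive increment at a $(1-c\vare)$-fraction of times $k<n$'' along the deterministic $a_\tau$-orbit of $z$. Lemma~\ref{eq:close-gen-prog} gives, for each good $x$, a large set $\hat R_{x,\xi}\subset[0,2\pi]$ of rotations for which $\dt(u(\tau,r_\theta x),\xi_t)>t+\lambda_1\tau/3$; but in your telescoping the geodesic $\xi$ at step $k$ must point from $o$ toward $u(k\tau,z)\cdot o$, so $\hat R_{a_{k\tau}z,\xi}$ depends on the entire past, and there is no mechanism that forces $\theta=0$ to lie in this moving good set. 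Birkhoff for the $a_\tau$-flow only controls how often $a_{k\tau}z$ visits a \emph{fixed} set; it cannot handle a good set that changes with the history.

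The paper's fix is to replace the deterministic $a_\tau$-flow by the random walk $w_n=a_\tau r_{\theta_n}\cdots a_\tau r_{\theta_1}z$ with i.i.d.\ uniform $\theta_j$. Then the rotation average in Lemma~\ref{eq:close-gen-prog}(2) is exactly the conditional expectation of the increment $\phi_{\theta,n}$ given the past, so the centered increments $\varphi_n$ form a bounded martingale-difference sequence and a Kolmogorov-type maximal inequality (Lemma~\ref{lem:maximal}) gives $\tfrac{1}{n}\sum_{\ell\le n}\varphi_\ell\to 0$ uniformly on a set of paths of measure $>1-\vare$. Combined with the standard maximal inequality~\eqref{eq:max-sl2-inv} ensuring most steps land in the good set $Y'$, this yields $\dt(u_{z,n}(\theta),o)\gtrsim \lambda_1\tau n$ for most $\theta$. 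Only at the very end does the paper pass back to the $a_\tau$-flow cocycle $u(n\tau,z)$, using that random walk trajectories track geodesics sublinearly (Lemma~\ref{lem:hyp-1}) and that their endpoints are distributed rotation-invariantly on $\partial P_z$, hence cover $\mu_{Hx_i}$-almost every $z$ in the fibre. Your sketch is missing both the martingale step and this final transfer; without them the argument does not close.
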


The proof of this lemma relies on results in \S\ref{sec:equiv-map}, equidistribution theorems in homogeneous dynamics, and certain maximal inequalities. The proof will occupy the rest of this section.

\medskip

We begin with the following theorem which is a special case of a theorem of Mozes and Shah~\cite{Mozes-Shah} --- 
the proof in~\cite{Mozes-Shah} builds on seminal works on unipotent dynamics by Dani, Margulis, and Ratner.

\begin{thm}\label{thm:equidist}
Let $\Gamma\subset G$ be a lattice and let $\vol_X$ denote the probability $G$-invariant measure on $X=G/\Gamma$.
Assume that there are infinitely many closed $H$-orbits $\{Hx_i: i\in\bbn\}$ in $X$.
For every $i$ let $\mu_{Hx_i}$ denote the $H$-invariant probability measure on $Hx_i$.
Then 
\[
\int f\diff\!\mu_{Hx_i}\to\int f\diff\!\vol\quad\text{for any $f\in C_c(G/\Gamma)$.}
\]
\end{thm}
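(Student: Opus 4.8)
\textbf{Proof proposal for Theorem~\ref{thm:equidist}.}

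The plan is to deduce this statement directly from the measure classification and equidistribution theorem of Mozes and Shah~\cite{Mozes-Shah}, which builds on Ratner's theorems and the linearization technique of Dani--Margulis. The key observation is that the sequence of $H$-invariant probability measures $\mu_{Hx_i}$ are all algebraic (homogeneous) measures supported on closed orbits of $H=\PGL_2(\bbr)$, and $H$ is generated by one-parameter unipotent subgroups; thus the hypotheses of the Mozes--Shah theorem apply to this sequence. That theorem asserts that any weak-$*$ limit point $\mu$ of a sequence of such homogeneous measures $\mu_{Hx_i}$ is again a homogeneous probability measure (in particular no mass escapes to infinity), supported on a closed orbit $Lx_\infty$ of some closed subgroup $L\supseteq H$, and moreover, if the supporting orbits $Hx_i$ are not eventually contained in a single such intermediate closed orbit, then $\mu$ must be the Haar measure $\vol_X$.

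First I would fix a weak-$*$ convergent subsequence $\mu_{Hx_{i_k}}\to\mu$ (using weak-$*$ compactness of the space of probability measures on the one-point compactification of $X$, together with the non-escape-of-mass part of Mozes--Shah to ensure $\mu$ is a genuine probability measure on $X$). Then I would invoke the classification part: $\mu$ is the $L$-invariant homogeneous probability measure on a closed orbit $Lx_\infty$ with $L$ a closed subgroup of $G$ containing a conjugate of $H$ and with $H\subseteq L$ acting ergodically. The subgroups of $G=\PGL_2(\bbc)$ containing $H=\PGL_2(\bbr)$ are just $H$ itself and $G$ (this is the elementary Lie-theoretic input: $\mathfrak{sl}_2(\bbr)$ is a maximal subalgebra of $\mathfrak{sl}_2(\bbc)$ as a real Lie algebra), so either $\mu$ is supported on a single closed $H$-orbit or $\mu=\vol_X$. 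The former is impossible for a limit of \emph{infinitely many distinct} closed $H$-orbits, because a closed $H$-orbit $Hx_\infty$ carries finite $H$-invariant volume, hence a neighbourhood of it can be chosen whose complement has positive Haar measure, and by Mozes--Shah only finitely many of the $Hx_i$ can be contained in (or accumulate on) $Hx_\infty$; more precisely, the set of closed $H$-orbits accumulating on a fixed closed $H$-orbit is finite, so an infinite subsequence cannot converge to it. Therefore every weak-$*$ limit point of $(\mu_{Hx_i})$ equals $\vol_X$, and since the sequence lies in a compact space with a unique limit point, $\mu_{Hx_i}\to\vol_X$ in the weak-$*$ topology, which is exactly the assertion $\int f\,\diff\mu_{Hx_i}\to\int f\,\diff\vol$ for all $f\in C_c(X)$.

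The main obstacle — really the only substantive point — is ruling out convergence to a measure supported on a single closed $H$-orbit, i.e.\ justifying that infinitely many distinct closed $H$-orbits cannot accumulate on one fixed closed $H$-orbit. This is precisely where one uses the finiteness statement packaged into the Mozes--Shah theorem (the relevant closed orbits through which the limiting orbit can be approximated form a set parametrized by finitely many data), together with the fact that a closed $H$-orbit in $X$ has finite volume and hence is ``isolated'' among homogeneous measures of its type in the appropriate sense. Everything else is soft: weak-$*$ compactness, non-escape of mass from Mozes--Shah, and the trivial classification of closed subgroups of $\PGL_2(\bbc)$ containing $\PGL_2(\bbr)$.
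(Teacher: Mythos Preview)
Your proposal is correct and matches the paper's approach: the paper does not give a proof at all, simply stating that the theorem is a special case of Mozes--Shah~\cite{Mozes-Shah} (built on Dani, Margulis, and Ratner). You have unpacked exactly the standard argument behind that citation --- weak-$*$ compactness, Ratner/Mozes--Shah classification of the limit as a homogeneous measure on some $Lx$ with $H\subseteq L\subseteq G$, the elementary fact that $\PGL_2(\bbr)$ is maximal in $\PGL_2(\bbc)$, and the Mozes--Shah ``eventual containment'' clause to rule out $L=H$ for infinitely many distinct orbits --- so there is nothing to add.
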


This theorem plays an important role in the sequel. 
We record a corollary of this theorem here which will be used in \S\ref{sec:proof-of-prop}. 

\begin{cor}\label{cor:equidist}
Let the notation be as in Theorem~\ref{thm:equidist}. 
Let $0<\vare<1/2$ and for each $i$, let $Z_i\subset Hx_i$ be a subset with $\mu_{Hx_i}(Z_i)>1-\vare$.
Let $\delta>0$ and for each $i$, let $\mathcal N_{i,\delta}$ be the open $\delta$-neighborhood of $Z_i$. Then 
there exists some $i_1$ so that
\[
\vol_X(\mathcal N_{i,\delta})>1-3\vare\;\text{ for all $i>i_1$}.
\]
\end{cor}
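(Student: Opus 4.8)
The plan is to deduce this from the equidistribution statement in Theorem~\ref{thm:equidist} by a routine covering argument. The key point is that $\mathcal N_{i,\delta}$ contains $Z_i$, which has relative measure $>1-\vare$ inside the closed orbit $Hx_i$; since the measures $\mu_{Hx_i}$ converge weak-$*$ to $\vol_X$, a set which captures most of each $\mu_{Hx_i}$ should, after thickening, capture most of $\vol_X$. The only subtlety is passing from measures of a fixed $\delta$-neighborhood to a weak-$*$ limit, which requires producing a genuine continuous test function supported near $\mathcal N_{i,\delta}$.

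First I would fix a continuous function $f\in C_c(G/\Gamma)$ with $0\leq f\leq 1$, with $f\equiv 1$ on $\overline{Z_i}$ (more precisely on the closed $\delta/2$-neighborhood of $Z_i$) and $\mathrm{supp}(f)\subset\mathcal N_{i,\delta}$. A minor annoyance is that this $f$ depends on $i$; to make the argument uniform I would instead argue by contradiction. Suppose the conclusion fails: then there is a subsequence $i_k\to\infty$ with $\vol_X(\mathcal N_{i_k,\delta})\leq 1-3\vare$. Since $\mu_{Hx_i}$ is a Borel probability measure and $Z_i\subset\mathcal N_{i,\delta}$ with $\mu_{Hx_i}(Z_i)>1-\vare$, we have $\mu_{Hx_{i_k}}(\mathcal N_{i_k,\delta})>1-\vare$. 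Now choose for each $k$ a function $f_k\in C_c(G/\Gamma)$ with $\mathbbm 1_{Z_{i_k}}\leq f_k\leq \mathbbm 1_{\mathcal N_{i_k,\delta}}$; then $\int f_k\diff\!\mu_{Hx_{i_k}}>1-\vare$ while $\int f_k\diff\!\vol_X\leq \vol_X(\mathcal N_{i_k,\delta})\leq 1-3\vare$, so $\int f_k\diff\!\mu_{Hx_{i_k}}-\int f_k\diff\!\vol_X>2\vare$ for all $k$. This already contradicts equidistribution \emph{provided} one can replace the $i$-dependent family $\{f_k\}$ by a single fixed test function, which is the one place care is needed.

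To remove the dependence on $i$, I would cover $G/\Gamma$ by finitely many balls of radius $\delta/4$, say centered at points in a finite $\delta/4$-net $\{y_1,\dots,y_m\}$, and fix continuous bump functions $\chi_j$ with $\mathbbm 1_{B(y_j,\delta/4)}\leq \chi_j\leq \mathbbm 1_{B(y_j,\delta/2)}$. For each $i$, let $S_i$ be the set of indices $j$ with $B(y_j,\delta/4)\cap Z_i\neq\emptyset$, and set $g_i=\min(1,\sum_{j\in S_i}\chi_j)$. Then $g_i$ satisfies $\mathbbm 1_{Z_i}\leq g_i\leq \mathbbm 1_{\mathcal N_{i,\delta}}$ (the left inequality because every point of $Z_i$ lies in some $B(y_j,\delta/4)$ with $j\in S_i$; the right because $B(y_j,\delta/2)\subset\mathcal N_{i,\delta}$ whenever $j\in S_i$), and $g_i$ ranges over the finite set of functions $\{\min(1,\sum_{j\in S}\chi_j): S\subset\{1,\dots,m\}\}$. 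Along the subsequence $i_k$, pass to a further subsequence on which $S_{i_k}$ is a constant set $S$, so $g_{i_k}=g$ for a single fixed $g\in C_c(G/\Gamma)$; then $\int g\diff\!\mu_{Hx_{i_k}}>1-\vare$ for all large $k$ but $\int g\diff\!\vol_X\leq 1-3\vare$, contradicting $\int g\diff\!\mu_{Hx_{i_k}}\to\int g\diff\!\vol_X$ from Theorem~\ref{thm:equidist}. This contradiction proves that $\vol_X(\mathcal N_{i,\delta})>1-3\vare$ for all sufficiently large $i$, which is the assertion with $i_1$ chosen accordingly.

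The main obstacle, such as it is, is precisely this uniformization: equidistribution gives convergence for each \emph{fixed} $f$, so one must manufacture a single test function (or a finite family) that simultaneously controls all the sets $\mathcal N_{i,\delta}$ from below and above; the finite-net trick above handles this cleanly. Everything else — the inclusions $Z_i\subset\mathcal N_{i,\delta}$, the Urysohn-type bump functions, and the choice of $i_1$ — is routine.
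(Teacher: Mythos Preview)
Your proof is correct and follows essentially the same strategy as the paper's: reduce to a finite (or precompact) family of continuous test functions so that the weak-$*$ convergence in Theorem~\ref{thm:equidist} can be applied uniformly. The paper phrases this by covering $X\setminus\mathcal N_{i,\delta}$ with balls of radius $\delta/2$ of bounded multiplicity and invoking uniform convergence on precompact families, whereas you cover $Z_i$ via a fixed finite $\delta/4$-net and pass to a subsequence on which the index set $S_{i_k}$ is constant; these are two implementations of the same idea, and your version is arguably more explicit.
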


\begin{proof}
For each $i$ consider a covering of $X\setminus \mathcal N_{i,\delta}$ 
with balls of radius ${\delta}/{2}$ with multiplicity $\kappa$ depending only on $X$. 
The characteristic function of these open sets can be approximated by a precompact family $\mathcal F$ of continuous functions. The claim follows since Theorem~\ref{thm:equidist} holds uniformly on precompact families.
\end{proof}


\subsection{Maximal inequalities}\label{sec:max-ineq}
Let $Y=Hx\subset X$ be a closed $H$-orbit and let $\mu$ be the probability $H$-invariant measure on $Y$.
Let $\mathcal R=[0,2\pi]^{\bbz}$ be equipped with $\diff\!\nu:=\Bigl(\frac{\diff\theta}{2\pi}\Bigr)^{\otimes\bbz}$.

Let $\tau>0$ and define $\eta_Y:\mathcal R\times Y\to\mathcal R\times Y$ by 
\[
\eta_Y((\theta_n),y)=(\eta(\theta_n),a_{\tau}r_{\theta_1} y)
\]
where $\eta:\mathcal R\to\mathcal R$ is the shift map. Then the measure $\nu\times\mu$ is $\eta_Y$-invariant and ergodic.

For any $f\in L^1(Y,\mu)$ the function $1\otimes f\in L^1(\mathcal R\times Y,\nu\times\mu)$.
Therefore, in view of the maximal inequality for $\eta_Y$, there exists an absolute constant $D>0$ so that the following holds. Let $f\in L^1(Y,\mu)$; for any $c>0$ we have
\be\label{eq:max-sl2-inv}
\nu\times\mu\biggl\{(\theta,y)\in \mathcal R\times Y: \sup_n\frac{1}{n}\sum_{\ell=1}^n 1\otimes f(\eta_Y^\ell(\theta,y))\geq c \biggr\}\leq \frac{D\|f\|_1}{c}.
\ee

We also need a maximal inequality similar to and more general than Kolmogorov's inequality in the context of the law of large numbers --- see also~\cite[\S3]{BQ-III}. 

Let $\tau>0$ and define an averaging operator $A_\tau$ on the space of Borel functions on $Y$ by
\[
A_\tau\varphi(y)=\frac{1}{2\pi}\int_0^{2\pi}\varphi(a_\tau r_\theta y)\diff\!\theta.
\]

Consider the space $\mathcal W=Y^{\bbn}$ and let $\omega_y$ 
be the Markov measure associated to $A_{\tau}$ and $y$. That is: for bounded Borel function $\phi_0,\ldots, \phi_m$ 
on $Y$ we have 
\[
\int\phi_1(w_1)\cdots\phi_m(w_m)\diff\!\omega_y(w)=(\phi_1A_\tau(\cdots(\phi_{m-1}A_\tau(\phi_m))\cdots))(y)
\]
where $w=(\cdots,w_{-1},w_1,w_2,\ldots)$.

The main case of interest to us is the trajectories obtained using the operator $A_{\tau}$. 
That is: trajectories of the form 
\be\label{eq:paths-in-W}
\Bigl((w_j)_{j\in\bbz}\Bigr)=\Bigl((w_j)_{j\leq 0},(a_{\tau}r_{\theta_{j}}\cdots a_{\tau}\theta_1w_0)_{j\geq1}\Bigr)
\ee
for a random $(\theta_j)\in \mathcal R$. 
Let $z\in Y$ we let $\mathcal W_z$ be the space of all paths as in~\eqref{eq:paths-in-W} with $w_0=z$. 
In this case, the measure $\omega_z$ is obtained by pushing forward $\nu$ to $\mathcal W_z$.

Fix some $z\in Y$.
Let $\rho:\Gamma\to\PGL_2(\lf)$ be a representation as in Lemma~\ref{lem:basic-lemma}. 
For every $n\geq 1$ define $u_{n,z}:\mathcal R\to\PGL_2(\lf)$ by 
\be\label{def:u-n-w-theta}
u_{z,n}\bigl((\theta_j)\bigr)=u(\tau,r_{\theta_{n}}w_{n-1})\cdots u(\tau,r_{\theta_2}w_1)u(\tau,r_{\theta_1}z);
\ee
where $w_1=a_\tau r_{\theta_1}z$ and for all $j>1$ we have $w_j=a_\tau r_{\theta_j}w_{j-1}$, see~\eqref{eq:paths-in-W}.

Fix some $z\in Y$.
For all $(\theta_j)\in\mathcal R$ and all $n\geq 1$, define $\phi_{\theta,n}:Y\to\bbr$ by
\be\label{def:phi-dt}
\phi_{\theta,n}(y)=\dt\Bigl(u(\tau, y)u_{z,n-1}((\theta_j)),o\Bigr)-\dt\Bigl(u_{z,n-1}((\theta_j)),o\Bigr).
\ee
In view of~\eqref{eq:rw-trivial-bd}, there is some $L=L(\rho, \tau)$, {\em but independent of $Y$}, so that
\be\label{eq:phi-bdd}
|\phi_{\theta,n}|\leq L\;\;\text{ for all $\theta$ and $n$}. 
\ee

Put $\varphi_n\Bigl((\theta_j)\Bigr):=\phi_{\theta,n}(r_{\theta_{n}}w_{n-1})-\frac{1}{2\pi}\int_0^{2\pi}\phi_{\theta,n}(r_\theta w_{n-1})\diff\!\theta$.

\begin{lemma}\label{lem:maximal}
For every $c>0$ and $\delta>0$, there exists some $N_1=N_1(c,\delta,L)$ with the following property.  
Let $z\in Y$ and define $\varphi_\ell$ as above. Then
\[
\nu\biggl(\{\theta=(\theta_j)\in\mathcal R:\max_{n\geq N_1}\frac{1}{n}|\textstyle\sum_{\ell=1}^n \varphi_\ell(\theta)|> c\}\biggr)\leq\delta.
\]
\end{lemma}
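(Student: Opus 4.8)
The sequence $S_n := \sum_{\ell=1}^n \varphi_\ell$ is, by construction, a martingale with respect to the filtration $\mathcal F_n$ generated by $(\theta_1,\dots,\theta_n)$ on $\mathcal R$ with measure $\nu$. Indeed, $\varphi_\ell\bigl((\theta_j)\bigr) = \phi_{\theta,\ell}(r_{\theta_\ell}w_{\ell-1}) - \frac{1}{2\pi}\int_0^{2\pi}\phi_{\theta,\ell}(r_\theta w_{\ell-1})\,\mathrm{d}\theta$, and since $w_{\ell-1}$ and the function $\phi_{\theta,\ell}$ depend only on $(\theta_1,\dots,\theta_{\ell-1})$ (here $\phi_{\theta,\ell}$ involves $u_{z,\ell-1}$, which depends on $\theta_1,\dots,\theta_{\ell-1}$ only), the conditional expectation $\mathbb E(\varphi_\ell\mid\mathcal F_{\ell-1})$ is exactly the subtracted average, hence $0$. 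Moreover $|\varphi_\ell|\le 2L$ by \eqref{eq:phi-bdd}, so the martingale has uniformly bounded increments.

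**Key steps.** First I would record the martingale property and the bound $|\varphi_\ell|\le 2L$ as above; this reduces the statement to a general fact about martingales with bounded increments, namely that $\frac1n|S_n|\to 0$ almost surely (a strong law of large numbers for martingales), together with a quantitative maximal estimate for the tail of $\sup_{n\ge N_1}\frac1n|S_n|$. Second, I would invoke the martingale strong law (e.g. via Doob's $L^2$-maximal inequality applied to the $L^2$-bounded martingale $\sum_\ell \varphi_\ell/\ell$, combined with Kronecker's lemma): since $\mathrm{Var}(\varphi_\ell)\le 4L^2$, one has $\sum_\ell \mathbb E(\varphi_\ell^2)/\ell^2 \le 4L^2\sum_\ell \ell^{-2} < \infty$, so $M_n := \sum_{\ell=1}^n \varphi_\ell/\ell$ converges $\nu$-a.e. and in $L^2$, and Kronecker's lemma gives $\frac1n S_n\to 0$ a.e. Third, to get the uniform-in-$N_1$ conclusion with a single constant, I would apply Doob's maximal inequality to the tail: for the martingale $M_n$, $\nu\bigl(\sup_{n\ge N}|M_n - M_N| > \epsilon\bigr) \le \epsilon^{-2}\sum_{\ell > N} \mathbb E(\varphi_\ell^2)/\ell^2 \le 4L^2\epsilon^{-2}\sum_{\ell>N}\ell^{-2}$, which tends to $0$ as $N\to\infty$; a standard summation-by-parts argument then converts a bound on $\sup_{n\ge N}|M_n-M_N|$ and on $|M_N|$ into the bound $\sup_{n\ge N_1}\frac1n|S_n| \le c$ off a set of measure $\le\delta$, provided $N_1 = N_1(c,\delta,L)$ is chosen large enough. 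The crucial point is that all constants depend only on $c$, $\delta$, and $L$ — and $L = L(\rho,\tau)$ is independent of $Y$ by \eqref{eq:phi-bdd} — so $N_1$ does not depend on the orbit $Y$ or the basepoint $z$.

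**Main obstacle.** The only genuinely delicate point is the passage from the a.e. convergence $\frac1n S_n\to 0$ to the \emph{uniform} tail estimate $\nu(\sup_{n\ge N_1}\frac1n|S_n|>c)\le\delta$ with $N_1$ independent of $z$ and $Y$: one must be careful that the summation-by-parts (Kronecker-type) step is itself quantitative and that the resulting tail bound $\sum_{\ell>N_1}\ell^{-2}\ll N_1^{-1}$ is driven only by the uniform increment bound $2L$ and not by any orbit-dependent quantity. This is exactly why the lemma is stated with $N_1 = N_1(c,\delta,L)$ rather than an a.e. statement; the proof should therefore avoid the SLLN as a black box and instead run the Doob-maximal-plus-summation-by-parts argument directly, keeping explicit track of constants. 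Everything else — the martingale identity and the bound $|\varphi_\ell|\le 2L$ — is routine given the construction in \eqref{def:phi-dt} and the uniform bound \eqref{eq:phi-bdd}.
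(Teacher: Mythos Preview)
Your proposal is correct and matches the paper's approach: both verify that $(\varphi_\ell)$ is a martingale difference sequence with $|\varphi_\ell|\le 2L$, then apply a Kolmogorov/H\'ajek--R\'enyi type maximal inequality to obtain the tail bound with constants depending only on $c,\delta,L$. The paper cites this inequality as a packaged result (combining Lo\`eve with Fazekas--Klesov), whereas you unpack it via Doob's maximal inequality for $M_n=\sum_\ell \varphi_\ell/\ell$ followed by a Kronecker/summation-by-parts step; these are the same argument.
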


\begin{proof}This lemma is proved using the following maximal inequality which follows, e.g.\ by combining~\cite[p.~386]{Loeve} with~\cite[Thm~1.1]{Fa-Kl}, see also~\cite{Haj-Ren, Chow}. 

\medskip

{\em Let $(\Omega,\mathcal B, \beta)$ be a standard probability space and let $\{\zeta_n\}$ 
be a sequence of bounded Borel functions on $\Omega$ so that $\mathbb E_\beta(\zeta_n|\zeta_{n-1},\ldots,\zeta_1)=0$ for every $n$.
Then for every $N_1\geq 1$ and every $c>0$ we have 
\begin{multline}\label{eq:max-ineq-martingail}
\beta\Bigl(\{\omega\in\Omega:\max_{N_1\leq n\leq N}\frac{1}{n}|\textstyle\sum_{\ell=1}^n\zeta_\ell(\omega)|> c\}\Bigr)\leq\\
\frac{1}{c^2}\biggl(\textstyle\sum_{n=N_1}^{N}\frac{\int\zeta_n^2}{n^2}+\frac{1}{N_1^2}\textstyle\sum_{n=1}^{N_1}{\int\zeta_n^2}\biggr).
\end{multline}
}

Returning to our setup, we now observe that
\[
\mathbb E_{\nu}(\varphi_n|\varphi_{n-1},\ldots,\varphi_1)=\mathbb E_{\nu}\Bigl(\mathbb E_{\nu}(\varphi_n|\theta_{n-1},\ldots,\theta_1)|\varphi_{n-1},\ldots,\varphi_1\Bigr).
\]
Moreover, we have $\mathbb E_{\nu}(\varphi_n|\theta_{n-1},\ldots,\theta_1)=0$. Hence 
\[
\mathbb E_{\nu}(\varphi_n|\varphi_{n-1},\ldots,\varphi_1)=0.
\]

Therefore, we may apply~\eqref{eq:max-ineq-martingail} with the space $(\mathcal R, \mathcal B^{\otimes \bbz}, \nu)$ 
and the sequence $\{\varphi_n\}$ of functions.
Since $\int\varphi_n^2\leq 2L^2$, see~\eqref{eq:phi-bdd}, and  $\sum\frac{1}{n^2}$ is a convergent series,  the lemma follows. 
\end{proof}

%

\subsection{Conclusion of the proof of Lemma~\ref{lem:basic-lemma}}\label{sec:proof-main-lemma}
%
%
%
%
%
%


Let $0<\vare<1/2$ with $\ref{k:rw-triv-bd}\vare<0.01\lambda_1$, see~\eqref{eq:rw-trivial-bd}, 
and let $\tau=N_0+1$ where $N_0$ is as in Lemma~\ref{lem:rm-wk-ccycl-dist-xi-t}. 

Let $\delta>0$ be small enough so that for any $g,g'\in G$ with $g=hg'$ and $\|h-I\|\leq \delta$, we have
\be\label{eq:choose-delta}
\dist(a_{\ell}r_\theta g,a_{\ell}r_\theta g')<\alpha_0/2
\ee
for all $\theta\in[0,2\pi]$ and all $0<\ell<2\tau$, see~\eqref{eq:tau0-alpha0}.

Let $\mathcal N$ denote the $\delta$-neighborhood of $F_\vare(\tau,\alpha_0)$ in $X$.
In view of~\eqref{eq:choose-delta} and Lemma~\ref{eq:close-gen-prog}(2) we have the following. 
Let $\xi=\{\xi_t\}\subset\mathcal T$ with $\xi_0=o$ be a geodesic. Then
\be\label{eq:av-generic-cirle-pf}
\frac{1}{2\pi}\int_0^{2\pi}\dt(u({\tau},r_\theta x), \xi_t)\diff\!\theta>t+{\tau}\lambda_1/5
\ee
for all $x\in\mathcal N$. Let us write $\lambda_2:=\lambda_1/5$.

%
%

\medskip

By Theorem~\ref{thm:equidist}, there exists some $i_0$ so that for all $i>i_0$ we have 
\[
\mu_{Hx_i}(\mathcal N\cap Y_i)\geq 1-2\vare^4.
\]
Fix some $i>i_0$ and let $Y=Y_i$ and $\mu=\mu_{Hx_i}$.

By Fubini's theorem, there exists a compact subset $Y'\subset \mathcal N\cap Y$ with $\mu(Y')\geq1-\vare^3$
so that for all $y\in Y'$, we have 
\[
|\{\theta\in[0,2\pi]: r_\theta y\in\mathcal N\cap Y\}|\geq1-\vare.
\]

Apply the maximal inequality in~\eqref{eq:max-sl2-inv} with $Y$, $f=\mathbbm 1_{Y\setminus Y'}$, and $\tau$ as above. 
In consequence, there exists an absolute constant $D>0$ so that 
\be\label{eq:max-sl2-inv-used}
\nu\times\mu\biggl\{(\theta,y)\in \mathcal R\times Y: \sup_n\frac{1}{n}\sum_{\ell=1}^n 1\otimes f(\eta^\ell(\theta,y))\geq \vare \biggr\}\leq D\vare^2.
\ee
Therefore, if $\vare$ is small enough, Fubini's theorem implies the following: 
there exists some $Z'\subset Y$ with $\mu(Z')>1-2\vare$ so that for every $z\in Z'$ we have 
\be\label{eq:lem-max-used-1}
\nu\biggl(\{\theta\in\mathcal R:\sup_n\frac{1}{n}\sum_{\ell=1}^n 1\otimes f(\eta_Y^\ell(\theta,z))\geq \vare\}\biggr)\leq \vare.
\ee
Let $\theta=(\theta_j)$ be in the complement of the set on the left side of~\eqref{eq:lem-max-used-1} and let $(w_j)\in \mathcal W_z$ 
be the path obtain from this $\theta$, see~\eqref{eq:paths-in-W}. Then
\be\label{eq:max-ineq-used-2}
\frac{1}{n}\sum_{\ell=1}^n \mathbbm 1_{Y'}(w_\ell)\geq 1-\vare.
\ee

Put $I_n(\theta):=\{1\leq \ell\leq n: w_\ell\in Y'\}$ and $I_n'(\theta):=\{1\leq\ell\leq n\}\setminus I_n(\theta)$.
Then, by~\eqref{eq:max-ineq-used-2}, we have   
\be\label{eq:max-ineq-mi-use}
\# I_n(\theta)\geq (1-\vare)n.
\ee

Apply now Lemma~\ref{lem:maximal} with this $z\in Z'$ and with $c=0.1\lambda_2$ and $\delta=\vare$.
Let $N_1$ be as in Lemma~\ref{lem:maximal} for these choices. Then
\be\label{eq:lem-max-used}
\nu\biggl(\{\theta\in\mathcal R:\max_{n\geq N_1}\frac{1}{n}|\textstyle\sum_{\ell=1}^n \varphi_\ell(\theta)|> 0.1\lambda_2\}\biggr)\leq \vare.
\ee

Let $\mathcal R_z\subset\mathcal R$ 
be the compliment of the union of sets appearing on the left sides of~\eqref{eq:lem-max-used-1} and~\eqref{eq:lem-max-used}.
Let $\theta\in\mathcal R_z$ and write $I_n$ and $I_n'$ for $I_n(\theta)$ and $I'_n(\theta)$, respectively. 
Let $\ell\in I_n$. Then 
\[
w_{\ell}=a_{\tau}\theta_{\ell-1}\cdots a_{\tau}r_{\theta_1}z\in Y',
\] 
and by the definition of $Y'$, $r_{\beta}w_{\ell}\in Y\cap\mathcal N$ for some $\beta\in[0,2\pi]$.

Apply~\eqref{eq:av-generic-cirle-pf} with $g=r_{\beta}w_{\ell}$ and the geodesic segment 
$\xi$ connecting $o$ to $q:=u_{\ell}((\theta_j,w_j))$, see~\eqref{def:u-n-w-theta}. 
Let us put $t=\dt(q, o)$ and parametrize so that $\xi(t)=o$. In consequence, we have the following: 
\begin{align}
\label{eq:traj-large-genric-set}t+\lambda_2\tau&<\frac{1}{2\pi}\int_0^{2\pi}\dt(u(\tau,r_\theta r_\beta w_{\ell})q, \xi(t))\diff\!\theta\\
\notag&=\frac{1}{2\pi}\int_0^{2\pi}\dt(u(\tau,r_\theta w_{\ell})q, o)\diff\!\theta&&o=\xi(t).
\end{align}

Moreover, by~\eqref{def:phi-dt} we have
\[
\frac{1}{2\pi}\int_0^{2\pi}\phi_{\theta,\ell+1}(r_\theta w_{\ell})\diff\!\theta=\frac{1}{2\pi}\int_0^{2\pi}\dt(u(\tau,r_\theta w_{\ell})q,o)-\dt(q,o)\diff\!\theta.
\]
Recall that $\dt(q,o)=t$, therefore, using the above and~\eqref{eq:traj-large-genric-set} we get that 
\be\label{eq:average-big-Y}
\frac{1}{2\pi}\int_0^{2\pi}\phi_{\theta,\ell+1}(r_\theta w_{\ell})\diff\!\theta\geq\lambda_2\tau\quad\text{if $\ell\in I_n$.}
\ee
Recall further from~\eqref{eq:rw-trivial-bd} that $\dt(u(\tau,g),e)\leq \ref{k:rw-triv-bd}\tau$ for all $g\in F$; thus, 
we may use the triangle inequality and get also the trivial estimate 
\begin{align}
\notag\biggl|\frac{1}{2\pi}\int_0^{2\pi}\phi_{\theta,\ell+1}(r_\theta w_{\ell})\diff\!\theta\biggr|&\leq\frac{1}{2\pi}\int_0^{2\pi}|\dt(u(\tau,r_\theta w_{\ell})q,o)-\dt(q,o)|\diff\!\theta\\
\label{eq:average-trivial-est}&\leq \ref{k:rw-triv-bd}\tau
\end{align}
for all $0\leq \ell\leq n-1$.

In view of~\eqref{eq:average-big-Y} and~\eqref{eq:average-trivial-est}, for every $n\in\bbn$ we have
\begin{align}
\notag\sum_{\ell=1}^{n}\frac{1}{2\pi}\int_0^{2\pi}\phi_{w,\ell}(r_\theta w_{\ell-1})\diff\!\theta&\geq (\#I_n)\lambda_2\tau-(n-\#I_n)\ref{k:rw-triv-bd}\tau\\
\notag{}^{\text{\eqref{eq:max-ineq-mi-use}$\leadsto$}}&\geq (1-\vare)\lambda_2\tau n-\vare \ref{k:rw-triv-bd}\tau n\\
\label{eq:average-est-main}{}^{\vare<0.1\;\&\;\vare \ref{k:rw-triv-bd}<0.1\lambda_2\leadsto}&\geq\lambda_2\tau n/2
\end{align}

Let now $n> N_1$. 
Therefore, since $((\theta_j,w_j))\in\mathcal R_z$, we conclude from~\eqref{eq:lem-max-used}
that
\be\label{eq:lem-max-used-again}
\frac{1}{n}\biggl|\sum_{\ell=1}^n \varphi_{\ell}(\theta)\biggr|\leq 0.1\lambda_2.
\ee
Recall again from~\eqref{def:phi-dt} the definition of $\phi_{\theta,n}(r_{\theta_{n}}w_{n-1})$,
also recall that $\varphi_n=\phi_{\theta,n}-\frac{1}{2\pi}\int_0^{2\pi}\phi_{\theta,n}$. We thus obtain
\[
\sum_{\ell=1}^n \varphi_{\ell}(\theta)=\dt\Bigl(u_{z,n}(\theta),o\Bigr)-
\sum_{\ell=1}^n\frac{1}{2\pi}\int_0^{2\pi}\phi_{\theta,\ell}(r_\theta w_{\ell-1})\diff\!\theta.
\]
This, together with~\eqref{eq:lem-max-used-again} and~\eqref{eq:average-est-main}, implies that for all 
$n\geq N_1$ we have
\be\label{eq:final-est-rand-trj}
\dt\Bigl(u_{z,n}(\theta),o\Bigr)\geq (\tau/2-1/10)\lambda_2n\geq \lambda_2\tau n/3=\lambda_1\tau n/15.
\ee

To get Lemma~\ref{lem:basic-lemma} from~\eqref{eq:final-est-rand-trj} it remains to note that 
trajectories 
\[
\{a_{\tau}\theta_{n}\cdots a_{\tau}r_{\theta_1}z: (\theta_j)\in\mathcal R\}
\] 
give rise to the rotation invariant distribution on the boundary circle corresponding to $Hg_z$, recall that $g_z\in F$. 
Moreover, for $\nu$-a.e.\ $\theta=(\theta_j)\in\mathcal R$ there exists a unique geodesic $\{\xi_{\theta,t}\}$ with $\xi_{\theta,0}=g_z$ 
so that the trajectory $a_{\tau}\theta_{n}\cdots a_{\tau}r_{\theta_1}g_z$ is at a sublinear distance from $\{\xi_{\theta,t}\}$, see Lemma~\ref{lem:hyp-1}. Indeed even a central limit theorem holds for these trajectories~\cite{Furst-Pois, Furst-Kest, Guivarch, Berger-CLT}. 
\qed

\section{Proof of Proposition~\ref{thm:psi-circle-circle}}\label{sec:proof-of-prop}\label{sec:proof-prop-final}

In this section we complete the proof of Proposition~\ref{thm:psi-circle-circle}. 
The proof uses Lemma~\ref{lem:basic-lemma}; we begin with some preliminary statements. 

Let the notation be as in Proposition~\ref{thm:psi-circle-circle}.
In particular, recall that for any $g\in\PGL_2(\lf_v)$, 
$\tc_g$ denotes  the graph of the linear fractional transformation 
\[
g:\mathbb P\lf_v\to\mathbb P\lf_v.
\]
Alternatively, $\tc_g$ is the image of $\{(h,ghg^{-1}): h\in\PGL_2(\lf_v)\}$ in $\mathbb P\lf_v\times\mathbb P\lf_v$.

Let $\alpha,\beta\in\lf_v$ and define 
\[
{\rm Crs}(\alpha,\beta)=\{([\alpha,1],[s,1])\in\mathbb P\lf_v\times\mathbb P\lf_v\}\cup\{([r,1],[\beta,1])\in\mathbb P\lf_v\times\mathbb P\lf_v\}.
\]
We refer to ${\rm Crs}(\alpha,\beta)$ as {\em crosses}.

\begin{lemma}\label{lem:circles-crosses}
Let $\{g_{i}\}$ be a squence of elements in $\PGL_2(\lf_v)$. 
Then at least one of the following holds.
\begin{enumerate}
\item There exists a subsequence $\{g_{i_m}\}$ and some $g\in\PGL_2(\lf_v)$ so that $\tc_{g_{i_m}}\to\tc_g$, or
\item there exists a subsequence $\{g_{i_m}\}$ so that $\{\tc_{g_{i_m}}\}$ converges to a cross or a union of a line and a point.
\end{enumerate}
\end{lemma}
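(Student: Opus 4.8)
The statement is a compactness assertion about the family of graphs $\{\tc_g : g \in \PGL_2(\lf_v)\}$ inside the compact space of closed subsets of $\mathbb{P}\lf_v \times \mathbb{P}\lf_v$ (equipped with the Hausdorff metric — or Chabauty topology in the non-Archimedean case). The plan is to parametrize $g$ and read off, along a convergent subsequence, what the limit of $\tc_{g_i}$ must look like. Since $\mathbb{P}\lf_v \times \mathbb{P}\lf_v$ is compact, the space of its closed subsets is compact, so after passing to a subsequence we may assume $\tc_{g_{i}} \to \mathcal{Z}$ for some closed set $\mathcal{Z}$; the whole content is to identify the possibilities for $\mathcal{Z}$.

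First I would lift each $g_i$ to a matrix $\begin{pmatrix} a_i & b_i \\ c_i & d_i\end{pmatrix}$ in $\GL_2(\lf_v)$, normalized so that the vector $(a_i,b_i,c_i,d_i)$ has norm $1$ (sup-norm, say). Passing to a further subsequence, these converge to some $(a,b,c,d)$ of norm $1$, which represents a (possibly singular) $2\times 2$ matrix $A$. The defining equation of $\tc_{g_i}$ in coordinates $([z,1],[w,1])$ is the bihomogeneous relation $c_i zw + d_i w = a_i z + b_i$, i.e. $c_i zw + d_i w - a_i z - b_i = 0$; passing to the limit, the limiting locus is contained in the zero set of $czw + dw - az - b = 0$. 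Now split into cases according to the rank of $A = \begin{pmatrix} a & b \\ c & d\end{pmatrix}$.

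\emph{Case $\det A \neq 0$:} then $A$ represents an honest element $g \in \PGL_2(\lf_v)$, the limiting equation is exactly that of $\tc_g$, and a short argument (both $\tc_{g_i}$ and $\tc_g$ are graphs of homeomorphisms of the compact space $\mathbb{P}\lf_v$, and pointwise convergence of the linear fractional maps on $\mathbb{P}\lf_v$ — which follows from $A_i \to A$ with $A$ invertible — upgrades to uniform convergence by compactness) gives $\tc_{g_i} \to \tc_g$ in the Hausdorff sense. This is alternative (1).

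\emph{Case $\det A = 0$ but $A \neq 0$:} then the quadric $czw + dw - az - b = 0$ degenerates. Writing $A$ as having rank $1$, one factors the bilinear form: there are $([{\alpha},1])$ and $([{\beta},1])$ (or their analogues at $\infty$, handled by the homogeneous form) so that the zero set of the limiting equation is the union $\{z = {\alpha}\} \cup \{w = {\beta}\}$, i.e. a cross ${\rm Crs}({\alpha},{\beta})$; alternatively, if one of the two "factors" is identically satisfied or degenerate, one gets a line together with a point. Here the main subtlety — and the step I expect to cost the most care — is to check that the limit of the \emph{graphs} $\tc_{g_i}$ actually \emph{fills up} the whole cross (or the line-plus-point), rather than being some proper closed subset of it; this requires tracking, along the subsequence, which points of $\mathbb{P}\lf_v$ are sent close to the "collapsed" value and showing their images accumulate on the full vertical/horizontal piece. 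One arranges this by a further diagonal extraction: fix a countable dense set of points in $\mathbb{P}\lf_v$, pass to a subsequence along which the images of all of them converge, and note their limits are dense in the cross. One also must rule out, or rather absorb into "a line and a point," the boundary behavior at $[1,0]\in\mathbb{P}\lf_v$, which is why the statement allows that extra shape. The case $A = 0$ is excluded by the norm-$1$ normalization.

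Assembling these cases: either $\det A \neq 0$ and we are in alternative (1), or $\det A = 0$ (with $A\neq 0$) and we are in alternative (2). Since every subsequence of $\{g_i\}$ has a further subsequence landing in one of the two cases, the lemma follows. The only genuinely non-formal point is the surjectivity-of-the-limit argument in the degenerate case, which I would handle by the dense-set extraction just described together with the explicit factorization of the limiting bihomogeneous quadratic form.
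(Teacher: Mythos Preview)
Your argument is correct and is essentially the paper's approach: both compactify the sequence by normalizing matrix representatives of $g_i$ and then case-split on whether the limiting matrix is invertible, reading off the degenerate bihomogeneous form in the singular case. The only difference is cosmetic---you normalize to sup-norm $1$ and split on the rank of the limit $A$, while the paper fixes the determinant within a $\PSL_2$-coset and cases on the behavior of the entry $c_i$ (using the explicit hyperbola form $(s-a_i/c_i)(r+d_i/c_i)=-k/c_i^2$)---but the content is the same.
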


\begin{proof}
If there exists a subsequence $\{g_{i_m}\}$ and some $g\in\PGL_2(\lf_v)$
so that $g_{i_m}\to g$, then $\tc_{g_{i_m}}\to\tc_g$. That is: part~(1) holds.

Therefore, we may assume that $g_i\to\infty$ and will show that part~(2) holds in this case. 
Passing to a subsequence we may assume all $g_i$'s are in one $\PSL_2(\lf_v)$ coset, hence we 
assume $\det(g_i)=k$ for all $i$.

We use the projective coordinates, in those coordinates we have: $\mathbb P\lf_v=\{[r,s]: r,s\in\lf_v\}$ and 
\[
\tc_{g_i}=\{([r,s],[a_ir+b_is, c_ir+d_is]\}
\]
where $g_i=\begin{pmatrix}a_i & b_i\\ c_i & d_i\end{pmatrix}$. 

First let us assume that for all but finally many $g_i$'s we have $c_i\neq0$.
Omitting these finitely many terms, we assume $c_i\neq 0$. Using the non-homogeneous coordinates, we have
\begin{multline*}
\tc_{g_i}=\Bigl\{([r,1],[\tfrac{a_ir+b_i}{c_ir+d_i}, 1]: -d_i/c_i\neq r\in \lf_v\Bigr\}\quad\cup \\ \{([1,0], [a_i/c_i, 1]\}\cup\{([-d_i/c_i,1],[1,0])\}
\end{multline*}

Alternatively, except for two points, the graph $\tc_{g_i}$ is given by the equation 
\be\label{eq:graph-prod}
(s-a_i/c_i)(r+d_i/c_i)=-k/c_i^2
\ee
--- recall that $\det(g_i)=k$. The missing two points can be obtained by taking limit as $r\to\infty$ or $s\to\infty$.   

Now if the sequence $\{c_i\}$ is bounded away from $\infty$ and $0$, then since $g_i\to\infty$ we get from~\eqref{eq:graph-prod} 
that $\tc_{g_i}$ converges to $\{([1,0], [-d_i/c_i,1]\}$ or $\{([a_i/c_i,1],[1,0])\}$. 
Therefore, we may assume that passing to a subsequence either $c_i\to0$ or $c_i\to\infty$. 
In either case we get the conclusion in part~(2).
 
It remains to consider the case where $c_{i_m}=0$ along a subsequence. 
In this case, $\{\tc_{g_{i_m}}\}$ converges to the union of a line and a point. 
\end{proof}

For $j=1,2$, let 
\[
\mathfrak p_j:\PGL_2\times\PGL_2\to\PGL_2
\] 
be the projection onto the $j$-th component; put 
$\sigma_j=\mathfrak p_j\circ\sigma:\Gamma\to\PGL_2(\lf_v)$.

If $\sigma(\Gamma)\subset\sG(\gfd_v)$ is unbounded, then either $\sigma_1(\Gamma)$
or $\sigma_2(\Gamma)$ is unbounded. Using Lemma~\ref{prop:main-prop-intro} we can show that indeed both these projections are unbounded:

\begin{lemma}\label{prop:sigma-Delta-unbd}\label{cor:proj-1-2-unbd}
Let $M=\bbh^3/\Gamma$ be a finite volume 
hyperbolic $3$-manifold which contains infinitely many totally geodesic surfaces, 
$\{\sfc_i:i\in\bbn\}$.  
For every $i\geq 1$ let $\Delta_i\subset \Gamma$ denote the fundamental group of the surface $\sfc_i$.
Suppose $\sigma_j(\Gamma)$ is unbounded for some $j=1,2$.
Then both $\sigma_1(\Delta_i)$ and $\sigma_2(\Delta_i)$ are unbounded for all large enough $i$. In particular, both $\sigma_1(\Gamma)$ and $\sigma_2(\Gamma)$ are unbounded.
\end{lemma}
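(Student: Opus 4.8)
The plan is to use the Main Lemma (Lemma~\ref{lem:basic-lemma}) to show that the restriction of the representation $\sigma$ to each $\Delta_i$ has unbounded image in \emph{each} coordinate, for $i$ large. First I would fix the setup: the hypothesis that $\sigma_j(\Gamma)$ is unbounded for some $j$ means $\rho:=\sigma$, viewed as a homomorphism $\Gamma\to\sG(\gfd_v)$, has unbounded image, and (enlarging $\lf_v$ if needed) it is a homomorphism into $\PGL_2(\lf_v)\times\PGL_2(\lf_v)$ whose image is Zariski dense, so each of $\sigma_1,\sigma_2$ is Zariski dense in $\PGL_2$. The point is to rule out the possibility that $\sigma_1(\Delta_i)$ (say) is bounded while $\sigma_2(\Delta_i)$ is unbounded for infinitely many $i$; an analogous argument handles $\sigma_2(\Delta_i)$ bounded.

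The key mechanism is the following. If $\sigma_j(\Gamma)$ is unbounded for some $j$, then $\rho=\sigma$ satisfies the standing hypotheses of \S\ref{sec:equiv-map} with $\PGL_2(\lf_v)$ replaced by the split group $\sG(\lf_v)\cong\PGL_2(\lf_v)\times\PGL_2(\lf_v)$ (the cocycle construction and the multiplicative ergodic theorem go through verbatim for this semisimple group, with a positive top Lyapunov exponent $\lambda_1>0$ coming from unboundedness and Zariski density, exactly as in Theorem~\ref{thm:equiv-map-bry}(2)). Hence Lemma~\ref{lem:basic-lemma} applies: there are $\lambda_0>0$, $\tau$, $N$ and $i_0$ so that for every $i>i_0$ there is $Z_i\subset Hx_i$ with $\mu_{Hx_i}(Z_i)>1-\vare$ and $\dt(u(n\tau,z),o)>\lambda_0\tau n$ for all $z\in Z_i$ and $n>N$, where now $\dt$ is the $\sG(\lf_v)$-invariant metric on $\mathcal T\times\mathcal T$ (product of the two factors' trees/hyperbolic spaces), so $\dt=\dt^{(1)}+\dt^{(2)}$ after identifying $\sG(\lf_v)$-orbits. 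The closed $H$-orbit $Hx_i$ corresponds to the totally geodesic surface $\sfc_i$ and $\Delta_i$ is (a conjugate of) the lattice $\rho^{-1}(\text{stabilizer})$, i.e.\ the cocycle restricted along $Hx_i$ is governed by $\sigma|_{\Delta_i}$. Linear growth of $\dt(u(n\tau,z),o)=\dt^{(1)}(\cdots)+\dt^{(2)}(\cdots)$ along a positive-measure (hence, after an equidistribution/recurrence argument à la Corollary~\ref{cor:equidist}, along a genuinely $\Delta_i$-dynamical) set of trajectories forces \emph{both} summands to be unbounded: if $\sigma_j(\Delta_i)$ were bounded, then $u(n\tau,z)$ would stay in a bounded set in the $j$-th factor, so $\dt^{(j)}$ would be bounded along the whole trajectory, contradicting the splitting only if the other term alone already gives the required growth — so the correct deduction is that boundedness of $\sigma_j(\Delta_i)$ is compatible with the Main Lemma only if $\sigma_{3-j}(\Delta_i)$ carries all the growth; to exclude \emph{that}, I invoke that $\Delta_i$ is a lattice in $H=\PGL_2(\bbr)$ and that a Zariski-dense unbounded representation into a single $\PGL_2(\lf_v)$ would, by Lemma~\ref{lem:proj} together with superrigidity for the rank-one target (or directly: the boundary map argument of Mostow, since $\Delta_i$ is a cocompact Fuchsian group and the target is rank one), extend to $H$ — but $\sigma|_{\Delta_i}$ is the restriction of $\sigma|_\Gamma$, and if one factor of $\sigma|_{\Delta_i}$ extended to $H$ while the other were bounded, one reaches a contradiction with Zariski density of $\sigma(\Gamma)$ in $\PGL_2\times\PGL_2$ via the commensurator/normalizer structure.

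Concretely, the steps I would carry out, in order: (i) set up $\rho=\sigma$ as a homomorphism into the split group and verify $\lambda_1>0$; (ii) invoke Lemma~\ref{lem:basic-lemma} to get the linear escape estimate for the $\sG(\lf_v)$-cocycle along $Hx_i$, $i>i_0$; (iii) translate this into a statement about $\sigma|_{\Delta_i}$, using that $Hx_i\leftrightarrow\sfc_i$ and the stabilizer of $x_i$ in $H$ is $\Gamma\cap g_i H g_i^{-1}$ commensurable with a conjugate of $\Delta_i$; (iv) observe $\dt=\dt^{(1)}+\dt^{(2)}$ and conclude that for $i>i_0$ at least one coordinate grows linearly; then use Lemma~\ref{lem:circles-crosses}-type compactness, or more simply the boundary-map rigidity for cocompact Fuchsian groups into rank-one targets, to upgrade "at least one grows" to "both grow"; (v) derive from $\sigma_j(\Delta_i)$ unbounded for all large $i$ that $\sigma_j(\Gamma)$ is unbounded (immediate, since $\Delta_i\subset\Gamma$).

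The main obstacle I anticipate is step~(iv): showing that the \emph{other} coordinate must also be unbounded, not just that the total distance grows. The clean way is to note that if $\sigma_1(\Delta_i)$ is bounded, then since $\Delta_i$ is a cocompact lattice in $H$ and $\sigma_2(\Delta_i)$ is Zariski dense and unbounded in $\PGL_2(\lf_v)$, the boundary map $\psi$ associated to $\sigma_2|_{\Delta_i}$ (restricted to the circle $\partial\bbh^2\hookrightarrow\partial\bbh^3$ bounding $\sfc_i$) is defined; but the \emph{global} boundary map $\Psi$ for $\sigma|_\Gamma$ restricted to that circle then has essential image inside a set of the form $\{p\}\times\mathbb P\lf_v$, i.e.\ a cross; iterating over the infinitely many surfaces and using that the $\sfc_i$'s become equidistributed (Theorem~\ref{thm:equidist}) forces $\Psi$ to land in a proper subvariety on a positive-measure set, contradicting Lemma~\ref{lem:PsiGmZd}(1) applied to the Zariski-dense $\sigma(\Gamma)$. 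Making this last contradiction precise — in particular ensuring the "bad" set for $\Psi$ genuinely has positive $\Gm$-measure and not merely positive measure on a single circle — is the technical heart, and is where I would spend the bulk of the argument; everything else is bookkeeping with Lemma~\ref{lem:basic-lemma} and the cocycle formalism of \S\ref{sec:equiv-map}.
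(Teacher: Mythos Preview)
Your instinct to invoke the Main Lemma is correct, but you have made the argument much harder than it needs to be, and your step~(iv) contains genuine errors.

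The paper's proof is two lines. One applies Lemma~\ref{lem:basic-lemma} not to the product representation $\sigma$ but to the \emph{single factor} $\rho=\sigma_j$, the one assumed unbounded. This is exactly in the form required by \S\ref{sec:equiv-map} (a homomorphism into $\PGL_2(\lf_v)$ with unbounded Zariski-dense image), so the lemma gives linear growth of the $\sigma_j$-cocycle along $Hx_i$ for all large $i$, hence $\sigma_j(\Delta_i)$ is unbounded for all large $i$. In particular $\sigma(\Delta_i)$ is unbounded in the product. Now Lemma~\ref{lem:proj} --- which is a purely algebraic statement about Zariski closures of non-elementary Fuchsian groups inside $\PGL_2\times\PGL_2$ --- says that $\sH_i(\lf_v)$ contains $\{(h,g_ihg_i^{-1}):h\in\PGL_2(\lf_v)\}$ with index $\le 8$. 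Passing to this finite-index subgroup, the two coordinate projections differ by conjugation by $g_i$, so one is bounded iff the other is; since $\sigma_j(\Delta_i)$ is unbounded, both are. That is the whole proof.

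Your step~(iv) goes astray in two places. First, you invoke ``superrigidity for the rank-one target'' for a cocompact Fuchsian group $\Delta_i$ mapping into $\PGL_2(\lf_v)$: no such theorem exists --- lattices in $\PGL_2(\bbr)$ are notoriously non-rigid, and there is no reason a Zariski-dense unbounded $\Delta_i\to\PGL_2(\lf_v)$ should extend to $H$. Second, your fallback argument via the global map $\Psi=(\psi_1,\psi_2)$ is circular: the construction of $\psi_1$ in \S\ref{sec:def-Psi} already requires $\sigma_1(\Gamma)$ to be unbounded, which is part of the conclusion you are trying to establish. You do cite Lemma~\ref{lem:proj}, but you misread it as a rigidity/extension statement; it is instead the structural fact that does all the work in step~(iv) once you know $\sigma(\Delta_i)$ is unbounded in the product.
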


\begin{proof}
For every $i$ let $\sH_i$ denote the Zariski closure of $\sigma(\Delta_i)$.
Let $j$ be so that $\sigma_j(\Gamma)$ is unbounded.
Then it follows from Lemma~\ref{prop:main-prop-intro} that $\sigma_j(\Delta_i)$ is unbounded 
for all large enough $i$. Therefore, by Lemma~\ref{lem:proj} we have:
for all large enough $i$, there exists some $g_i\in\PGL_2(\lf_v)$ so that
\[
\sH_i(\lf_v)\cap\{(h,g_ihg_i^{-1}):h\in\PGL_2(\lf_v)\}
\]
is a subgroup of index at most $8$ in $\sH_i(\lf_v)$.

The claims hold for all such $i$.
\end{proof}


\subsection{The definition of $\Psi$}\label{sec:def-Psi}
Recall that $M=\bbh^3/\Gamma$ is a closed 
hyperbolic $3$-manifold containing infinitely many totally geodesic surfaces, 
$\{\sfc_i:i\in\bbn\}$. 

For each $i\in\bbn$, we let $\Delta_i\subset\Gamma$ be the fundamental group of $\sfc_i$. 
Let $\sH_i\subset\PGL_2\times\PGL_2$ denote the Zariski closure of $\sigma(\Delta_i)$ for every $i$.

Let $i$ be large enough so that Lemma~\ref{prop:sigma-Delta-unbd} holds true. 
In particular, there exists some $g_{v,i}\in\PGL_2(\lf_v)$ so that
\[
\{(h,g_{v,i}hg_{v,i}^{-1}): h\in\PSL_2(\lf_v)\}
\]
has index at most $8$ in $\sH_i(\lf_v)$. 
Let $\tc_i$ denote the image of $\sH_i(\lf_v)$ in $\mathbb P\lf_v\times\mathbb P\lf_v$; as was mentioned before, $\tc_i$ 
equals the image of 
\[
\{(h,g_{v,i}hg_{v,i}^{-1}): h\in\PGL_2(\lf_v)\}
\] 
in $\mathbb P\lf_v\times\mathbb P\lf_v$.

Recall that each $\sfc_i$ gives rise to a periodic $H$-orbits, $Hx_i$.
For every $i$, the orbit $Hx_i$ corresponds to a closed $\Gamma$-orbit 
\[
\mathcal C_i=\{C_i\Gamma\}\subset\mathcal C.
\]

Identify $X$ with $F$, then $Hx_i$ is identified with a subset $Y_i\subset F$; note that $Y_i$ has only finitely many connected components. For every $y\in Hx_i$ we let $g_y\in F$ be the point corresponding to $y$. 
The orbit $Hg_y$ gives rise to a plane $P_y$ in $\bbh^3$ and
a circle 
\be\label{eq:C-y-gamma-i}
\text{$C_y:=\partial P_y=C_i\gamma_y\in\mathcal C_i$ for some $\gamma\in\Gamma$.}
\ee

By Lemma~\ref{cor:proj-1-2-unbd}, $\sigma_j(\Gamma)$ is unbounded for $j=1,2$.
For $j=1,2$, let $u_j$ be the cocycle and $\psi_j$ the equivariant map constructed in \S\ref{sec:cocycle} 
using the representation $\sigma_j$. 
Define
\be\label{eq:def-Psi-final}
\Psi:=(\psi_1,\psi_2).
\ee
We will show that $\Psi$ satisfies the claims in Proposition~\ref{thm:psi-circle-circle}.

\begin{lemma}\label{lem:Psi-tgC-tc}
Let $\vare>0$. There exist 
\begin{itemize}
\item natural numbers $N=N(\vare)$ and $\tau=\tau(\vare)$,
\item for every $i>N$, a subset $Z'_i\subset Hx_i$ with $\mu_{Hx_i}(Z'_i)>1-\vare$, and 
\item for every $z\in Z'_i$, a subset $R_z\subset[0,2\pi]$ with $|R_z|>2(1-\vare)\pi$ 
\end{itemize}
so that the following hold. 
\begin{enumerate}
\item For every $z\in Z'_i$, $\theta\in R_z$, $n>N_0$, and $j=1,2$ we have 
\be\label{main-lem-use}
\dt(u_j(n\tau,r_\theta z), o)> \lambda_0\tau n.
\ee
\item For every $z\in Z'_i$ and $\theta\in R_z$ put
\[
\beta_{g_z,\theta}:=\lim_n a_{n\tau}r_\theta g_z\in C_z=C_i\gamma_z
\] 
where $g_z$, $C_z$, and $\gamma_z$ are as in~\eqref{eq:C-y-gamma-i}. Then
\be\label{eq:tot-geod-circ-to-circ}
\Psi(\beta_{g_z,\theta})\in\tc_i\sigma(\gamma_z).
\ee
\end{enumerate}
\end{lemma}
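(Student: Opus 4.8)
The plan is to deduce part~(1) more or less directly from the Main Lemma, and then to combine part~(1) with the convergence criterion of Lemma~\ref{lem:hyp-1} and the algebraic structure of $\sH_i$ to get part~(2). For part~(1): I would apply Lemma~\ref{lem:basic-lemma} to each of the representations $\sigma_1$ and $\sigma_2$ (both unbounded and Zariski dense by Lemma~\ref{prop:sigma-Delta-unbd}); an inspection of the proof of Lemma~\ref{lem:basic-lemma} shows that a single value of $\tau$ and a single threshold may be chosen to work for both, producing a constant $\lambda_0$ (the smaller of the two exponents), an index $i_0$, and for each $i>i_0$ subsets of $Hx_i$ of $\mu_{Hx_i}$--measure $>1-\vare^2$ on which $\dt(u_j(n\tau,\cdot),o)>\lambda_0\tau n$ holds for $n$ large and $j=1,2$. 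Let $\widetilde Z_i$ be the intersection of the two subsets. Since every rotation $r_\theta$ lies in $H$ and therefore preserves $Hx_i$ together with $\mu_{Hx_i}$, Fubini's theorem applied to $\theta\mapsto\mu_{Hx_i}(Hx_i\setminus r_{-\theta}\widetilde Z_i)$ yields a subset $Z'_i\subset Hx_i$ with $\mu_{Hx_i}(Z'_i)>1-\vare$ so that $R_z:=\{\theta\in[0,2\pi]:r_\theta z\in\widetilde Z_i\}$ has $|R_z|>2(1-\vare)\pi$ for every $z\in Z'_i$; then \eqref{main-lem-use} holds for all such $z,\theta$, for $n$ large and $j=1,2$.

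For part~(2), first fix $z\in Z'_i$, $\theta\in R_z$ and $j$, and put $p_n:=u_j(n\tau,r_\theta z)\cdot o\in\mathcal T$. By \eqref{eq:def-cocycle} we have $p_0=o$, the increments $\dt(p_n,p_{n+1})$ are uniformly bounded by the trivial estimate \eqref{eq:rw-trivial-bd}, and $\dt(p_n,o)>\lambda_0\tau n$ for $n$ large by part~(1); hence Lemma~\ref{lem:hyp-1} applies, $p_n$ converges to a point $\zeta_j(z,\theta)\in\partial\mathcal T=\mathbb P\lf_v$, and the $p_n$ stay at sublinear distance from a geodesic ray issuing from $o$. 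Because $\{a_{n\tau}r_\theta g_z\}$ realizes the geodesic ray in $\mathbb H^3$ toward $\beta_{g_z,\theta}$, and the cocycle $u_j$ was constructed (Theorem~\ref{thm:equiv-map-bry}) so that the boundary map $\psi_j$ records the asymptotics of $u_j(n\tau,\cdot)\cdot o$ along such rays --- the mechanism already used in Lemma~\ref{lem:rm-wk-ccycl-dist-xi-t} --- the point $\beta_{g_z,\theta}$ is one at which $\psi_j$ has the genuine value $\zeta_j(z,\theta)$, and so $\Psi(\beta_{g_z,\theta})=(\zeta_1(z,\theta),\zeta_2(z,\theta))$.

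It remains to place this value on $\tc_i\sigma(\gamma_z)$. From \eqref{eq:f-Gamma-equi} and the definition of $\omega_j$ via the fundamental domain, $u_j(n\tau,r_\theta z)=\sigma_j(\delta_n)$ for a \emph{single} sequence $\delta_n\in\Gamma$ (the relative winding of the trajectory), so $(u_1,u_2)(n\tau,r_\theta z)=\sigma(\delta_n)$. Since $a_{n\tau},r_\theta\in H$, the trajectory stays in $Hg_z$, whose image in $\mathbb H^3$ is the plane $P_z$ with $\partial P_z=C_z=C_i\gamma_z$; with $\Delta_i'\subseteq\Gamma$ the stabilizer of $C_i$ (a finite--index overgroup of $\Delta_i$, so $\overline{\sigma(\Delta_i')}^{\mathrm{Zar}}=\sH_i$) and $\Lambda_z=\gamma_z^{-1}\Delta_i'\gamma_z$ that of $C_z$, properness of the immersed surface $\Lambda_z\backslash P_z$ in $M$ forces $\delta_n\in s^{-1}\Lambda_z s'$ with $s,s'$ in a fixed finite subset of $\Gamma$. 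Running the limit of the previous paragraph uniformly over the closed orbit $Hx_i$ (i.e. over the circles of $\mathcal C_i$) then produces a measurable map on $C_i$ that is $\Delta_i'$--equivariant --- the equivariance coming from the \emph{honest} identity \eqref{eq:f-Gamma-equi} rather than an almost--everywhere relation --- and whose two coordinates are the unique $\Delta_i'$--equivariant boundary maps of $\sigma_1|_{\Delta_i'}$ and $\sigma_2|_{\Delta_i'}$. By Lemma~\ref{prop:sigma-Delta-unbd} and Lemma~\ref{lem:proj}, after passing to a finite--index subgroup these two restrictions are conjugate by $g_{v,i}$, so uniqueness of boundary maps forces the second coordinate to be $g_{v,i}$ applied to the first; hence this map lands in $\tc_i$, and applying its equivariance with $\gamma_z$ (together with the identification of $\Psi(\beta_{g_z,\theta})$ with its value at $\beta_{g_z,\theta}\gamma_z^{-1}\in C_i$) gives \eqref{eq:tot-geod-circ-to-circ}. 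I expect the main obstacle to be precisely this passage: $\psi_j$ is only defined almost everywhere on $\mathbb S^2$ while the points $\beta_{g_z,\theta}$ all lie on circles of $\mathcal C_i$, a countable union of circles and hence a null set in $\mathbb S^2$, so one cannot simply restrict; what saves the argument is that \eqref{main-lem-use} is a \emph{pointwise} bound --- this is the reason the Main Lemma is proved the way it is --- so that Lemma~\ref{lem:hyp-1} forces genuine convergence and a well--defined boundary value, after which the structure of $\sH_i$ and the uniqueness of boundary maps into $\PGL_2(\lf_v)$ locate it on $\tc_i\sigma(\gamma_z)$. (Alternatively, one can argue more directly from $u_j(n\tau,r_\theta z)=\sigma_j(\delta_n)$, the factorization through $\Lambda_z$, and the North--South dynamics of the escaping products $\sigma(\delta_n)$ on $\mathbb P\lf_v\times\mathbb P\lf_v$.)
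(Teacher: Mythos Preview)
Your argument for part~(1) matches the paper's: apply the Main Lemma to $\sigma_1$ and $\sigma_2$ with parameter $\vare^2/2$, intersect to get a set $Z_i$ of measure $>1-\vare^2$, and use Fubini (exploiting $r_\theta\in H$) to pass to $Z'_i$ and $R_z$.

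For part~(2), the paper's route is considerably shorter than yours. You correctly observe that $u_j(n\tau,r_\theta z)=\sigma_j(\delta_n)$ for a single $\delta_n\in\Gamma$ and that closedness of $Hg_z\Gamma$ forces $\delta_n$ into finitely many cosets of $\Lambda_z=\gamma_z^{-1}\Delta_i\gamma_z$. But then you take a detour through constructing a $\Delta_i'$--equivariant map on $C_i$ and invoking uniqueness of Furstenberg boundary maps to match the two coordinates via $g_{v,i}$. The paper instead draws the conclusion directly and geometrically: since $\sigma(\delta_n)$ lies in (a bounded neighbourhood of) $\sigma(\gamma_z)^{-1}\sH_i(\lf_v)\sigma(\gamma_z)$, the points $u_j(n\tau,r_\theta z)\cdot o$ lie in (a bounded neighbourhood of) the sub--symmetric--space / subtree $\mathcal T'\subset\mathcal T$ associated to that group, for a sequence $n_m\to\infty$. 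Part~(1) and Lemma~\ref{lem:hyp-1} give convergence of the full sequence to a point $\xi_\infty\in\partial\mathcal T$; a convergent--to--infinity sequence with infinitely many terms in a totally geodesic $\mathcal T'$ has its limit in $\partial\mathcal T'$. Finally $\partial\mathcal T'=\tc_i\sigma(\gamma_z)$ (the boundary of the diagonal embedding is exactly the graph $\tc_i$, conjugated by $\sigma(\gamma_z)$). No appeal to uniqueness of boundary maps is needed; the algebraic fact that both projections factor through the \emph{same} graph subgroup $\sH_i$ is what places the pair $(\zeta_1,\zeta_2)$ on $\tc_i\sigma(\gamma_z)$, and it is used at the level of the symmetric space rather than at the level of boundary maps. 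Your parenthetical ``alternatively'' is essentially this argument; it is not an alternative but the intended proof.

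On the a.e.\ issue you flag: the paper does not resolve it by restricting an a.e.\ map either. The pointwise convergence from part~(1) and Lemma~\ref{lem:hyp-1} \emph{is} the value assigned at $\beta_{g_z,\theta}$; the lemma should be read as asserting that this well--defined limit lies in $\tc_i\sigma(\gamma_z)$. This is all that is used downstream in the proof of Lemma~\ref{lem:F'-vare-C'-vare}, where generic circles are approximated by these $C_z$.
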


\begin{proof}
By Lemma~\ref{cor:proj-1-2-unbd}, $\sigma_j(\Gamma)$ is unbounded for $j=1,2$.
Let 
\[
\text{$i_0=i_0(\vare^2/2)$, $\tau=\tau(\vare^2/2)$, and $N=N(\vare^2/2)$}
\] 
be so that the conclusion of Lemma~\ref{prop:main-prop-intro} holds with 
$\vare^2/2$ and both $\sigma_1$ and $\sigma_2$. 
Moreover, assume that  Lemma~\ref{prop:sigma-Delta-unbd} holds true for all $i>i_0$.

Let $Z_i\subset Y_i$ be a subset with $\mu_{Hx_i}(Z_i)>1-\vare^2$ so that 
the conclusion of Lemma~\ref{prop:main-prop-intro} holds for all $z\in Z_i$ and for both $\sigma_1$ and $\sigma_2$. 
   
For every $z\in Z_i$, let 
\[
R_z=\{\theta\in[0,2\pi]: r_\theta z\in Z_i\}.
\]
By Fubini's theorem there is a subset $Z'_i\subset Z_i$ with $\mu_{Hx_i}(Z_i')>1-\vare$
so that for all $z\in Z_{i}'$ we have $|R_z|>2(1-\vare)\pi$.

We will show that the lemma holds with this $N$, $Z'_i$, and $R_z$.
Let $z\in Z_i'$. Then by Lemma~\ref{prop:main-prop-intro} we have
\be\label{main-lem-use-again}
\dt(u_j(n\tau,r_\theta z), o)> \lambda_0\tau n,\quad\text{ $\theta\in R_z$, $n>N$, and $j=1,2$}.
\ee
This establishes the part~(1).


Let $\theta\in R_z$. Then by~\eqref{main-lem-use} and Lemma~\ref{lem:hyp-1} we have the following.
There exists a unique geodesic $\{\xi_t:t\in\bbr\}\subset\mathcal T$ with $\xi_0=o$ so that 
\be\label{eq:uj-sublinear-xi-limit}
u_j(n\tau, r_\theta z)\to\xi_\infty\in\partial\mathcal T.
\ee
Moreover, for every $\eta>0$ there is some $N'=N'(\ref{k:rw-triv-bd},\lambda_0, \tau_0, N_0,\eta)$ so that 
\be\label{eq:uj-sublinear-xi}
\dt(u_j(n\tau, r_\theta z),\{\xi_t\})\leq \eta n\tau\quad\text{for all $n>N'$ and $j=1,2$.}
\ee

Note also that $a_tr_\theta\in H$ for all $t$ and $\theta$. Therefore, $\{a_{n\tau_0}r_\theta g_z:n\in\bbz\}\subset P_z$. 
Moreover, since $Hg_z\Gamma$ is closed, we get the following:
\be\label{eq:in-subtree}
\text{$u_j(n_m\tau,r_\theta z)\in\mathcal T'$ for a sequence $n_m\to\infty$}
\ee
where $\mathcal T'$ is the subtree corresponding to $\sigma(\gamma)^{-1}\H_i\sigma(\gamma)$.

Since by~\eqref{eq:uj-sublinear-xi-limit} we have $u_j(n_m\tau,r_\theta z)\to\xi_\infty$, the definition of $\Psi$,~\eqref{eq:def-Psi-final},
and~\eqref{eq:in-subtree} imply that $\Psi(\beta_{g_z,\theta})\in\partial\mathcal T'=\tc_i\sigma(\gamma_z)$. 

This finishes the proof of part~(2) and the lemma. 
\end{proof}

Recall that for a circle $C$ we denote the length measure on $C$
by $\Gm_C$. The following is a crucial step in the proof Proposition~\ref{thm:psi-circle-circle}. 

\begin{lemma}\label{lem:F'-vare-C'-vare}
For every $\vare>0$ there exists a subset $\hat F_\vare\subset F$ with 
\[
\vol_X(\hat F_\vare)>1-4\vare
\]
and for every $g\in \hat F_\vare$ a subset $\hat C_{g,\vare}\subset C_g$ with $\Gm_{C_g}(\hat C_{g,\vare})\geq (1-2\vare)\Gm_{C_g}(C_g)$
so that one of the the following holds. 
\begin{enumerate}
\item There exists some $h_{g}\in\PGL_2(\lf_v)$ so that $\Psi(\hat C_{g,\vare})\subset\tc_{h_{g}}$, or
\item $\Psi(\hat C_{g,\vare})$ is contained in a cross or the union of a line and a point. 
\end{enumerate}
\end{lemma}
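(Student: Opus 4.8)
The idea is to feed the conclusion of Lemma~\ref{lem:Psi-tgC-tc} into a compactness/equidistribution argument. Apply Lemma~\ref{lem:Psi-tgC-tc} with the given $\vare$ to obtain $N$, $\tau$, the sets $Z'_i\subset Hx_i$ with $\mu_{Hx_i}(Z'_i)>1-\vare$, and the sets $R_z\subset[0,2\pi]$ with $|R_z|>2(1-\vare)\pi$. For each large $i$ and each $z\in Z'_i$, the points $\beta_{g_z,\theta}$ with $\theta\in R_z$ fill out a subset of the circle $C_z=C_i\gamma_z$ of $\Gm_{C_z}$-measure at least $(1-\vare)\Gm_{C_z}(C_z)$ (the map $\theta\mapsto\beta_{g_z,\theta}$ is the boundary-circle parametrization arising from the rotation-invariant distribution, so it distributes mass proportionally to arclength), and by~\eqref{eq:tot-geod-circ-to-circ} on this subset $\Psi$ takes values in the single algebraic set $\tc_i\sigma(\gamma_z)=\tc_{g'_i}$ for a suitable $g'_i\in\PGL_2(\lf_v)$ (here $\tc_{g'_i}$ is again the graph of a linear fractional transformation, since $\tc_i\sigma(\gamma_z)$ is the translate of such a graph by an element of $\PGL_2\times\PGL_2$). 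The first step, then, is to record precisely that for every large $i$ and a.e.\ $z\in Z'_i$ there is $g'_{z}\in\PGL_2(\lf_v)$ with $\Gm_{C_z}\bigl(\{\xi\in C_z:\Psi(\xi)\in\tc_{g'_z}\}\bigr)\geq(1-\vare)\Gm_{C_z}(C_z)$.

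Next I would transfer this from the closed orbits $Hx_i$ to a positive-measure subset of $F$ using equidistribution. Since the $Hx_i$ equidistribute toward $\vol_X$ (Theorem~\ref{thm:equidist}) and since the subsets $Z'_i$ have measure $>1-\vare$ inside $Hx_i$, Corollary~\ref{cor:equidist} gives that for a suitable small $\delta>0$ the union of the $\delta$-neighborhoods $\mathcal N_{i,\delta}$ of $Z'_i$ eventually has $\vol_X$-measure $>1-3\vare$. Working in the fundamental-domain picture $X\cong F$, a point $g\in F$ close to some $z\in Z'_i$ has its circle $C_g$ close to $C_z=C_i\gamma_z$ in $\mathcal C$, and — after shrinking $\delta$ and using continuity of $\Psi$ on a Lusin set as in the proof of Lemma~\ref{lem:Egorov-Noatom} — one gets that $\Psi|_{C_g}$ takes values within a small neighborhood of $\tc_{g'_z}$ on a subset of $C_g$ of relative measure $\geq 1-2\vare$. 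Letting $i\to\infty$ along the neighborhoods, set $\hat F_\vare$ to be (a positive-measure refinement of) $\bigcup_{i\text{ large}}\mathcal N_{i,\delta}\cap F$, of measure $>1-4\vare$ after accounting for the losses.

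Finally I would pass to the limit on the target side using Lemma~\ref{lem:circles-crosses}. For $g\in\hat F_\vare$, take $i_m\to\infty$ and $z_m\in Z'_{i_m}$ with $g$ in the $\delta$-neighborhood of $z_m$; the associated elements $g'_{z_m}\in\PGL_2(\lf_v)$ form a sequence to which Lemma~\ref{lem:circles-crosses} applies. In case~(1) of that lemma, $\tc_{g'_{z_m}}\to\tc_{h_g}$ for some $h_g\in\PGL_2(\lf_v)$, and since $\Psi|_{C_g}$ was shown to be within vanishing distance of $\tc_{g'_{z_m}}$ on a set of relative measure $\geq1-2\vare$, we conclude $\Psi(\hat C_{g,\vare})\subset\tc_{h_g}$ — this is conclusion~(1). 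In case~(2), $\tc_{g'_{z_m}}$ converges to a cross or to the union of a line and a point, and the same closeness argument gives conclusion~(2). The main obstacle I anticipate is the second paragraph: making the transfer from $C_z$ to nearby $C_g$ rigorous requires simultaneously controlling (a) the continuity of $\Psi$ off a small exceptional set, (b) the convergence $C_g\to C_z$ in $\mathcal C$ together with the induced convergence of arclength measures, and (c) the fact that $\beta_{g_z,\theta}$ genuinely exhausts almost all of $C_z$ — i.e.\ that the "sublinear distance to a geodesic" statement of Lemma~\ref{lem:hyp-1} translates into full visual measure on the boundary circle. These are exactly the standard-but-delicate measurability facts the introduction promises to supply, so I would invoke Lemma~\ref{lem:Egorov-Noatom}, Egorov, and Lusin to push them through, much as in the proofs of Lemmas~\ref{lem:map-Ccal-Meas} and~\ref{lem:Fubini-main-prop-1}.
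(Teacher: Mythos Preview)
Your overall architecture matches the paper's proof: apply Lemma~\ref{lem:Psi-tgC-tc}, use Corollary~\ref{cor:equidist} to pass to $\delta$-neighborhoods $\mathcal N_{i,\delta}$ of $Z'_i$, establish an approximation claim (that $\Psi|_{C_g}$ lands in an $\eta$-neighborhood of some $\tc_h$ on a large subset of $C_g$), then let $\eta=1/m\to0$, form a $\limsup$ set, and invoke Lemma~\ref{lem:circles-crosses}. The paper does exactly this.

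The substantive difference is in the mechanism for the transfer from $C_z$ to a nearby $C_g$. You propose Lusin continuity of $\Psi$; the paper instead exploits that $\Psi$ is \emph{defined} as the boundary limit of the cocycle. Concretely: fix $n=n_\eta$ large enough that, by~\eqref{main-lem-use},~\eqref{eq:enif-psi-bigset} and Lemma~\ref{lem:hyp-1}, the points $u(n\tau,r_\theta g)\cdot o$ and $u(n\tau,r_\theta g_z)\cdot o$ already approximate $\Psi(\beta_{g,\theta})$ and $\Psi(\beta_{g_z,\theta})$ to within $\eta/4$ for $\theta$ in $R_g\cap R_z$. Then choose $\delta$ so small that ${\rm d}(g,g_z)<\delta$ forces ${\rm d}(a_{n\tau}r_\theta g, a_{n\tau}r_\theta g_z)\leq\eta/4$; this makes the two finite-time cocycle values coincide for most $\theta$, and hence $\Psi(\beta_{g,\theta})$ lies in the $\eta$-neighborhood of $\Psi(\beta_{g_z,\theta})\in\tc_i\sigma(\gamma_z)$. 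This is why the paper builds $\hat F_\vare$ inside the set $F_\vare$ of \S\ref{sec:set-unif-cov}: the uniform growth~\eqref{eq:unif-conv-F} on $F_\vare$ is what makes a single $n_\eta$ work for all $g$.

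Your Lusin route runs into precisely the obstacle you flag: comparing $\Psi(\beta_{g,\theta})$ with $\Psi(\beta_{g_z,\theta})$ via continuity on a Lusin set $D$ requires \emph{both} points in $D$, but $z=z_i$ varies with $i$ and you would need $\{\theta:\beta_{g_{z_i},\theta}\in D\}$ to have large measure for infinitely many $i$. This is not automatic from a single application of Lusin on $\mathbb S^2$ and would need an additional Fubini/density argument tied to the specific sequence $C_{z_i}\to C_g$. The paper's cocycle approach sidesteps this entirely, using only machinery already in place from \S\ref{sec:equiv-map}.
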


\begin{proof}
Fix some $\vare>0$. Let $i_0, N, \tau$, $Z'_i$, and $R_z$ be as in Lemma~\ref{lem:Psi-tgC-tc} applied with this $\vare$.

Let $F_{\vare}$ and $N_0$ be as in Lemma~\ref{lem:rm-wk-ccycl-dist-xi-t}.
Fix a geodesic $\{\xi_t\}$ emanating  from $o$ for the rest of the proof. Let $R_{g}\subset[0,2\pi]$ be 
the set where Lemma~\ref{lem:rm-wk-ccycl-dist-xi-t} holds for $\{\xi_t\}$, $g$, $u_1$, and $u_2$. In particular,
$|R_{g}|>2(1-\vare)\pi$ and for $j=1,2$ we have
\be\label{eq:enif-psi-bigset}
\dt(u_j(n,r_\theta g), o)> \lambda_1 n/3\;\;\;\text{ for all $\theta\in R_{g}$, $n>N_0$, and $t\in\bbr$}.
\ee


For any $g\in F_\vare$, let $P_g$ be the plane corresponding to $Hg$; put $C_g=\partial P_g$. 
For any $\theta\in R_g$, define 
\[
\beta_{g,\theta}:=\lim_n a_{n\tau}r_\theta g\in C_g.
\] 
Then $\Psi(\beta_{g,\theta})\in \Psi(C_g)$.

Given $i$ and $\delta>0$, let $\mathcal N_{i,\delta}$ be the $\delta$-neighborhood of $Z_i'$.
By Corollary~\ref{cor:equidist}, there exists some $i_1(\delta)$ so that $\vol_X(\mathcal N_{i,\delta})>1-3\vare$ for all $i>i_1(\delta)$.

\begin{claim} 
For every $\eta>0$, there exists some $\delta>0$ so that the following holds.
Let $i> i_\eta:=\max\{i_0, i_1(\delta)\}$ and let $g\in F_\vare\cap \mathcal N_{i,\delta}$. 
Then there exists some $C'_{g,\eta}\subset C_g$ with 
\[
\Gm_{C_g}(C'_{g,\eta})>(1-3\vare)\Gm_{C_g}(C_g)
\]
so that $\Psi(C'_{g,\eta})$ lies in the $\eta$-neighborhood of $\tc_{h}$ for some $h\in\PGL_2(\lf_v)$.
\end{claim}


Let us first assume the claim and finish the proof of the lemma.
For every $m\in\bbn$, let $\eta_m=1/m$ and let $\delta_m$ and $i_m$ be given by applying the claim with $\eta_m$. Set 
\[
\hat F_\vare:=F_\vare\cap \biggl(\bigcap_{\ell\geq 1}{\bigcup_{m\geq\ell} \mathcal N_{i_m,\delta_m}}\biggr);
\] 
note that $\vol_X(\hat F_\vare)\geq 1-4\vare$. 

Let $g\in \hat F_\vare$ and let $C_g$ be the corresponding circle. 
Then there exists a subsequence $\{m_k\}$ so that $g\in \mathcal N_{i_{m_k},\delta_{m_k}}$.
In view of the claim, for every $k$, there exists some $C'_{g,m_k}\subset C_g$ and some $h_{k}\in\PGL_2(\lf_v)$ with
$
\Gm_{C_g}(C'_{g,m_k})>(1-2\vare)\Gm_{C_g}(C_g)
$
so that 
\be\label{eq:apply-claim}
\text{$\Psi(C'_{g,m_k})$ lies in the $1/m_k$-neighborhood of $\tc_{h_{k}}$.}
\ee

Apply Lemma~\ref{lem:circles-crosses} with the sequence $\{\tc_{h_{k}}:k\in\bbn\}$. 
Then there exists a subsequence $\{k_i\}$ so that one of the following holds.
\begin{enumerate}
\item $\lim_i\tc_{h_{k_i}}=\tc_h$ for some $h\in\PGL_2(\lf_v)$, or
\item $\lim_i\tc_{h_{k_i}}$ is contained in a cross or the union of a line and a point.
\end{enumerate}

Let 
\[
\hat C_{g,\vare}:=\bigcap_{\ell\geq 1}\bigcup_{i\geq\ell}  C'_{g_{k_i},{m_{k_i}}},
\] 
then $\Gm_C(\hat C_{g,\vare})>(1-2\vare)\Gm(C_g)$. Moreover, for every $\beta\in \hat C_{g,\vare}$ we have 
$\Psi(\beta)\in \lim_i\tc_{h_{k_i}}$. The lemma follows.
\end{proof}

\begin{proof}[Proof of the claim]
Let $g\in F_\vare$. Recall that we fixed a geodesic $\{\xi_t\}$ emanating from $o$, and let $R_{g}\subset[0,2\pi]$ be 
the set where Lemma~\ref{lem:rm-wk-ccycl-dist-xi-t} holds for $\{\xi_t\}$, $g$, $u_1$, and $u_2$.
For $g\in F_{\vare}$ and $\theta\in R_g$, define 
\[
u(n\tau,r_\theta g)=(u_1(n\tau,r_\theta g),u_2(n\tau,r_\theta g)).
\]

For any $z\in Z'_i$, let $R_z$ be as in Lemma~\ref{lem:Psi-tgC-tc}. Similarly, for all $i>i_0$, $z\in Z_i'$, and $\theta\in R_z$ define
\[
u(n\tau, r_\theta g_z)=\Bigl(u_1(n\tau,r_\theta g_z), u_2(n\tau,r_\theta g_z)\Bigr).
\]

Thanks to~\eqref{main-lem-use},~\eqref{eq:enif-psi-bigset}, and Lemma~\ref{lem:hyp-1},
there exists some $n_\eta$ so that for all $n\geq n_\eta$ we have $u(n\tau, r_\theta g_z)$ and $u(n\tau,r_\theta g)$ approximate 
$\Psi(\beta_{g_z,\theta})$ and $\Psi(\beta_{g,\theta})$, respectively, within $\eta/4$.

Let $n=n_\eta$ and let $\delta>0$ be so that if ${\rm d}(h_1,h_2)<\delta$ 
for $h_1,h_2\in G$, then ${\rm d}(a_{n\tau}h_1, a_{n\tau}h_2)\leq \eta/4$ 
where ${\rm d}$ denotes the right invariant Riemannian metric on $G$.

Apply Corollary~\ref{cor:equidist} with the sets $\mathcal N_{i,\delta}$ and let $i_1(\delta)$ be as in that Corollary. 
Let $i>\max\{i_0, i_1(\delta)\}$ and let $g\in F_\vare\cap\mathcal N_{i,\delta}$. 
Then there exists some $g_z\in Z_i'$ so that ${\rm d}(g,g_z)<\delta$. 

The claim thus holds with $C'_{g,\eta}=\{\beta_{g,\theta}:\theta\in R_z\cap R_g\}$.  
\end{proof}




\begin{proof}[Proof of Proposition~\ref{thm:psi-circle-circle}]

First note that in view of Lemma~\ref{lem:PsiGmZd}, $\Psi$ satisfies part~(1) in the proposition --- recall that $\PGL_2\times\PGL_2$ acts transitively on $\mathbb P\lf_v\times\mathbb P\lf_v$, 

We now show that $\Psi$ also satisfies part~(2) in the proposition. 
We claim that there exists a full measure subset $\hat F\subset F$, and for every $g\in \hat F$ 
a full measure subset $\hat C_g\subset C_g$ so that one of the following holds. 
\begin{enumerate}
\item There exists some $h_{g}\in\PGL_2(\lf_v)$ so that $\Psi(\hat C_{g})\subset\tc_{h_{g}}$, or
\item $\Psi(\hat C_{g})$ is contained in a cross or the union of a line and a point. 
\end{enumerate}

Apply Lemma~\ref{lem:F'-vare-C'-vare} with $\vare=1/m$ for all $m\in\bbn$.
Let $\hat F_m=\hat F_{1/m}$ and for every $g\in \hat F_m$ let $\hat C_{m,g}=\hat C_{{1}/{m},g}$ denote the sets obtained by that lemma.
Define
\[
\hat F=\bigcap_{\ell\geq 1}\bigcup_{m\geq\ell} \hat F_m.
\]
Then $\hat F$ is conull in $F$.

Moreover, for every $g\in \hat F$ there exists a subsequence $m_k$ so that $g\in \hat F_{m_k}$
for all $k$. Let $\hat C_g=\bigcap_{\ell\geq 1}\bigcup_{k\geq\ell} \hat C_{m_k,g}$. Then $\hat C_g\subset C_g$ is conull in $C_g$.
Moreover, $\hat C_g$ satisfies (1) or~(2) in the claim --- recall that the same property holds for $\hat C_{m_k,g}$ by Lemma~\ref{lem:F'-vare-C'-vare}.

We now show that~(1) above holds almost surely. To see this, 
set $\mathcal L_1:=\{\tc_h:h\in\PGL_2(\lf_v)\}$ and $\mathcal L_2:=\{$union of a line and a point or crosses$\}$. 
Note that both $\mathcal L_1$ and $\mathcal L_2$ are $\Gamma$-invariant. 
Moreover, $\Psi$ is $\Gamma$-equivariant and $\Gamma$ acts ergodically on $\mathcal C$. 
Therefore, either the essential image of $\Psi|_C$ belongs to $\mathcal L_1$ a.e.\ $C\in\mathcal C$ or the essential image of $\Psi|_C$ belongs to $\mathcal L_2$ a.e.\ $C\in\mathcal C$.

Let $\xi\neq \xi'\in \mathbb S^2$ be so that $\Psi(\xi)=([r,1],[s,1])$ and $\Psi(\xi')=([r',1],[s',1])$ 
with $r\neq r'$ and $s\neq s'$ --- in view of Lemma~\ref{lem:PsiGmZd} we may find such points. 
Then there are exactly two crosses passing through both of $\Psi(\xi)$ and $\Psi(\xi')$; similarly for union of a line and a point. 
However, the set of circles in $\mathbb S^2$ passing through $\{\xi,\xi'\}$ covers the entire $\mathbb S^2$. 
Therefore, the essential image of $\Psi|_C$ belongs to $\mathcal L_2$ a.e.\ $C\in\mathcal C$ would contradict Lemma~\ref{lem:PsiGmZd}.

In consequence, we have: for a.e.\ $g\in \hat F$ there exists some $h_g\in\PGL_2(\lf_v)$ so that $\Psi(\hat C_g)\subset\tc_{h_g}$.
Since $\Psi$ is $\Gamma$-equivariant, this concludes the proof of the proposition.   
\end{proof}

\bibliographystyle{amsplain}
\bibliography{papers}

\end{document}